\theoremstyle{plain}
\newtheorem{proposition}{Proposition}[section]
\newtheorem{theorem}[proposition]{Theorem}
\newtheorem{lemma}[proposition]{Lemma}
\newtheorem{corollary}[proposition]{Corollary}
\theoremstyle{definition}
\newtheorem{definition}[proposition]{Definition}
\theoremstyle{remark}
\newtheorem{remark}[proposition]{Remark}
\DeclareMathOperator{\Euc}{Euc} 
\DeclareMathOperator{\id}{id}
\DeclareMathOperator{\dist}{dist}
\DeclareMathOperator{\Lc}{\mathcal{L}}
\DeclareMathOperator{\Levi}{\mathscr{L}}
\DeclareMathOperator{\Cc}{\mathcal{C}}
\DeclareMathOperator{\Sc}{\mathcal{S}}
\DeclareMathOperator{\Bb}{\mathbb{B}}
\DeclareMathOperator{\Cb}{\mathbb{C}}
\DeclareMathOperator{\Db}{\mathbb{D}}
\DeclareMathOperator{\Nb}{\mathbb{N}}
\DeclareMathOperator{\Rb}{\mathbb{R}}
\DeclareMathOperator{\Bf}{\mathsf{K}}
\newcommand{\abs}[1]{\left|#1\right|}
\newcommand{\norm}[1]{\left\|#1\right\|}
\begin{document}

\title[Hankel operators on domains with bounded intrinsic geometry]{Hankel operators on domains with bounded intrinsic geometry}
\author{Andrew Zimmer}\address{Department of Mathematics, University of Wisconsin, Madison, WI, USA}
\email{amzimmer2@wisc.edu}
\date{\today}
\keywords{Hankel operators, compact operators, K\"ahler metrics, bounded geometry, convex domains}

\begin{abstract} In this paper we consider Hankel operators on domains with bounded intrinsic geometry. For these domains we characterize the $L^2$-symbols where the associated Hankel operator is compact (respectively bounded) on the space of square integrable holomorphic functions. \end{abstract}

\maketitle

\section{Introduction}

Given a bounded domain $\Omega \subset \Cb^d$, let $\mu$ denote the Lebesgue measure and let $L^2(\Omega)=L^2(\Omega,\mu)$.  Also let $A^2(\Omega) \subset L^2(\Omega)$ denote the subspace of holomorphic functions and $P_\Omega : L^2(\Omega) \rightarrow A^2(\Omega)$ denote the \emph{Bergman projection}, i.e. the orthogonal projection of $L^2(\Omega)$ onto $A^2(\Omega)$. Finally, given $\phi \in L^{2}(\Omega)$, the associated \emph{Hankel operator} $H_\phi$ has domain 
\begin{align*}
{\rm dom}(H_\phi) = \left\{ f \in A^2(\Omega) : \phi \cdot f \in L^2(\Omega)\right\}
\end{align*}
and is defined by
\begin{align*}
H_\phi(f) = (\id-P_\Omega)(\phi \cdot f) = \phi \cdot f - P_\Omega(\phi \cdot f). 
\end{align*}

We will let $\Sc(\Omega) \subset L^{2}(\Omega)$ denote the symbols $\phi$ where the associated Hankel operator is densely defined on $A^2(\Omega)$. We always have $L^\infty(\Omega) \subset \Sc(\Omega)$ and when $L^\infty(\Omega) \cap A^2(\Omega)$ is dense in $A^2(\Omega)$ (e.g. $\Omega$ is strongly pseudoconvex~\cite[Theorem 3.1.4]{Catlin1980} or star shaped) we have $L^2(\Omega) = \Sc(\Omega)$.

In this paper we consider the well studied problem of characterizing the symbols with compact Hankel operator. The results of this paper are especially inspired by Li's characterization for strongly pseudoconvex domains~\cite{Li1994} (see~\cite{Li1992,LiLuecking1994,Luecking1992} for closely related results). For such domains, Li proved that a Hankel operator is compact if and only if on each sufficiently small metric ball $\Bb_\Omega(\zeta, r)$ in the Bergman metric the symbol can be approximated by a holomorphic function. More precisely: 

\begin{theorem}[Li~\cite{Li1994}]\label{thm:Lis_theorem} Suppose $\Omega \subset \Cb^d$ is a strongly pseudoconvex domain and $\phi \in L^2(\Omega)$. Then the following are equivalent: 
\begin{enumerate}
\item $H_\phi$ extends to a compact operator on $A^2(\Omega)$,  
\item for some $r > 0$ 
$$\lim_{\zeta \rightarrow \partial \Omega} \inf\left\{\frac{1}{\mu(\Bb_\Omega(\zeta,r))} \int_{\Bb_\Omega(\zeta,r)} \abs{f-h}^2  d\mu : h \in {\rm Hol}\left(\Bb_\Omega(\zeta,r)\right)\right\} = 0.$$ 
\end{enumerate}
\end{theorem}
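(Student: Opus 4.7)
The plan is to prove the two implications separately, using normalized Bergman kernels for the necessity direction and a Bergman-ball covering argument for the sufficiency direction.

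For (1) $\Rightarrow$ (2), I would test $H_\phi$ against the normalized reproducing kernels $k_\zeta = K_\Omega(\cdot,\zeta)/\sqrt{K_\Omega(\zeta,\zeta)}$. These are unit vectors in $A^2(\Omega)$; using the pairing $\ip{f,k_\zeta} = f(\zeta)/\sqrt{K_\Omega(\zeta,\zeta)}$ together with $K_\Omega(\zeta,\zeta) \to \infty$ as $\zeta \to \partial\Omega$ on a strongly pseudoconvex domain, plus the fact that $L^\infty \cap A^2$ is dense in $A^2$, one gets $k_\zeta \rightharpoonup 0$ weakly. Compactness of $H_\phi$ then forces $\|H_\phi k_\zeta\|_{L^2} \to 0$. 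Since $P_\Omega(\phi k_\zeta)$ is the $L^2$-nearest point in $A^2(\Omega)$,
\[
\|H_\phi k_\zeta\|_{L^2}^2 = \inf_{g \in A^2(\Omega)} \int_\Omega |\phi k_\zeta - g|^2 \, d\mu \geq \inf_{g \in A^2(\Omega)} \int_{\Bb_\Omega(\zeta,r)} |\phi k_\zeta - g|^2 \, d\mu.
\]
For small $r$ and strongly pseudoconvex $\Omega$, the kernel $K_\Omega(\cdot,\zeta)$ is zero-free on $\Bb_\Omega(\zeta,r)$ and satisfies $|k_\zeta(z)|^2 \gtrsim \mu(\Bb_\Omega(\zeta,r))^{-1}$ there, so setting $h = g/k_\zeta$ (holomorphic on the ball) converts the above into
\[
\|H_\phi k_\zeta\|_{L^2}^2 \gtrsim \frac{1}{\mu(\Bb_\Omega(\zeta,r))} \inf_{h \in \Hol(\Bb_\Omega(\zeta,r))} \int_{\Bb_\Omega(\zeta,r)} |\phi - h|^2 \, d\mu,
\]
which yields (2) upon letting $\zeta \to \partial\Omega$.

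For (2) $\Rightarrow$ (1), I would approximate $H_\phi$ in operator norm by compact operators. Fix $\epsilon>0$. By (2), there is a compact $K \subset \Omega$ such that for every $\zeta \in \Omega \setminus K$ some $h_\zeta \in \Hol(\Bb_\Omega(\zeta,r))$ satisfies the mean oscillation bound with constant $\epsilon$. Using bounded geometry of the Bergman metric on a strongly pseudoconvex domain, select a countable covering $\{\Bb_\Omega(\zeta_j,r)\}$ of $\Omega \setminus K$ of bounded multiplicity with a subordinate partition of unity $\{\chi_j\}$ having uniform bounds. Write $\phi = \phi_1 + \phi_2$ where $\phi_1 = \mathds{1}_{K'} \phi$ for a slight enlargement $K'$ of $K$. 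The symbol $\phi_1$ is bounded with compact support in $\Omega$: any weakly null sequence $f_n$ in $A^2$ converges uniformly on compact subsets by interior Bergman estimates, hence $\phi_1 f_n \to 0$ in $L^2$, so $H_{\phi_1}$ is compact. For $\phi_2$, estimate
\[
\|H_{\phi_2} f\|_{L^2}^2 \leq \int_\Omega \Bigl| \sum_j \chi_j (\phi - h_{\zeta_j}) f \Bigr|^2 d\mu \lesssim \sum_j \int_{\Bb_\Omega(\zeta_j,r)} |\phi - h_{\zeta_j}|^2 |f|^2 \, d\mu,
\]
by bounded multiplicity. Apply the pointwise submean inequality $|f(z)|^2 \lesssim \mu(\Bb_\Omega(z,r))^{-1}\int_{\Bb_\Omega(z,2r)}|f|^2 d\mu$ for $f \in A^2$, the $\epsilon$-smallness of the local mean oscillations of $\phi$, and bounded multiplicity again to bound the last sum by $\epsilon \|f\|_{A^2}^2$. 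Hence $\|H_\phi - H_{\phi_1}\|_{\mathrm{op}} \lesssim \sqrt{\epsilon}$, and since $\epsilon$ is arbitrary, $H_\phi$ lies in the operator-norm closure of the compact operators, so it is compact.

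The main obstacle is the second direction. The delicate points are: producing a Bergman-metric covering with uniformly bounded multiplicity together with a partition of unity whose cutoff functions have controlled size; and, crucially, converting the purely local mean-oscillation hypothesis into a global operator-norm estimate on $H_\phi$. This conversion hinges on the submean-value inequality for $|f|^2$ over Bergman balls combined with bounded multiplicity, and on the nonvanishing and size estimates of $K_\Omega$ on small Bergman balls that are characteristic of the strongly pseudoconvex setting. Passing from compactness on the $\mathrm{A}^2$ part to the full $H_\phi$ also requires the observation that $H_\phi = H_{\phi - h}$ whenever $h$ is globally holomorphic, used implicitly via the partition-of-unity decomposition.
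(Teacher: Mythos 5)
The paper does not prove this theorem directly (it is cited from Li); it recovers it as a corollary of Theorem~\ref{thm:main} together with the volume comparison in Theorem~\ref{thm:volume_equivalence}. Your (1) $\Rightarrow$ (2) direction matches the paper's argument for the generalization: weak nullity of the normalized kernels, $\norm{H_\phi k_\zeta}_2 \rightarrow 0$ by compactness, minimality of $\norm{H_\phi f}_2$ among $\norm{\phi f - g}_2$ over $g \in A^2(\Omega)$, and division by the non-vanishing kernel on a small Bergman ball via the near-diagonal estimate $\abs{\Bf_\Omega(z,\zeta)}^2 \asymp \Bf_\Omega(z,z)\Bf_\Omega(\zeta,\zeta)$. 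The one technicality you elide is that $\phi k_\zeta$ need not be in $L^2$ for general $\phi \in L^2(\Omega)$, so the extended operator applied to $k_\zeta$ need not equal $(\id - P_\Omega)(\phi k_\zeta)$; the paper repairs this by replacing $s_{\zeta_m}$ with nearby elements of ${\rm dom}(H_\phi)$. That is fixable.

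The (2) $\Rightarrow$ (1) direction has a genuine gap at its central step. The inequality $\norm{H_{\phi_2}f}_2^2 \leq \int_\Omega \abs{\sum_j \chi_j(\phi - h_{\zeta_j})f}^2 d\mu$ requires exhibiting $\phi_2 f - \sum_j \chi_j(\phi - h_{\zeta_j})f$ as an element of $A^2(\Omega)$; but this difference is (up to the cutoff near $K'$) equal to $\bigl(\sum_j \chi_j h_{\zeta_j}\bigr) f$, which is \emph{not} holomorphic because the $\chi_j$ are not. A Hankel operator only ignores globally holomorphic corrections --- you cannot subtract a different holomorphic function on each Bergman ball --- and overcoming exactly this obstruction is the substance of the theorem. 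The paper's route is to set $\phi_1 = \sum_m \hat{\chi}_m h_m$ and $\phi_2 = \phi - \phi_1 = \sum_m \hat{\chi}_m(\phi - h_m)$. The piece $\phi_2$ has small local $L^2$ norms, so $M_{\phi_2}$ and hence $H_{\phi_2} = (\id - P_\Omega)\circ M_{\phi_2}$ is compact (Proposition~\ref{prop:multiplication_operators}); your covering, bounded multiplicity, and submean-value estimates are the right tools for that half. But the glued piece $\phi_1$ must be handled by computing $\bar{\partial}\phi_1 = \sum_j (h_{m_j} - h_{m_1})\,\bar{\partial}\hat{\chi}_{m_j}$, showing this is locally small because overlapping approximants are close to one another in $L^2$, and then solving $\bar{\partial}u = f\,\bar{\partial}\phi_1$ with weighted H\"ormander-type estimates (Corollary~\ref{cor:existence_big-domains}, Proposition~\ref{prop:smooth_symbols}) to produce an actual element of $A^2(\Omega)$ close to $\phi_1 f$. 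Without this $\bar{\partial}$-equation step your argument does not close.
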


In this paper we extend Li's result to domains with bounded intrinsic geometry, see Definition~\ref{defn:BBG} below. This class of domains was introduced in~\cite{Z2021} and include many well studied families of domains such as
\begin{enumerate}
\item strongly pseudoconvex domains, 
\item finite type domains in $\Cb^2$, 
\item convex domains or more generally $\Cb$-convex domains which are Kobayashi hyperbolic (with no boundary regularity assumptions), 
\item simply connected domains which have a complete K\"ahler metric with pinched negative sectional curvature, 
\item bounded homogeneous domains, and
\item the Bers embeddings of the Teichm\"uller space  of hyperbolic surfaces of genus $g$ with $n$ punctures.
\end{enumerate} 
Further, by definition, any domain biholomorphic to one of the domains listed above also has bounded intrinsic geometry. 

As in the classical strongly pseudoconvex case, for domains with bounded intrinsic geometry we show that compactness of a Hankel operator is equivalent to the symbol being locally approximable by holomorphic functions in a $L^2$-space, but instead of using a scaled Lebesgue measure we use the Riemannian volume form $dV_\Omega$ induced by the Bergman metric. 

\begin{theorem}\label{thm:main} Suppose $\Omega \subset \Cb^d$ is a bounded domain with bounded intrinsic geometry and $\phi \in \Sc(\Omega)$. Then the following are equivalent: 
\begin{enumerate}
\item $H_\phi$ extends to a compact operator on $A^2(\Omega)$,  
\item for some $r > 0$ 
$$\lim_{\zeta \rightarrow \partial \Omega} \inf\left\{ \int_{\Bb_\Omega(\zeta,r)} \abs{\phi-h}^2  dV_\Omega  : h \in {\rm Hol}\left(\Bb_\Omega(\zeta,r)\right)\right\} = 0.$$ 
\end{enumerate}
\end{theorem}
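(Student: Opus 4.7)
I would follow Li's strategy, with the bounded intrinsic geometry hypothesis supplying the uniform control over the Bergman kernel, Bergman volume form, and the existence of good covers by Bergman balls that is available for free in the strongly pseudoconvex case.

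For the direction $(1) \Rightarrow (2)$, I would test compactness on normalized reproducing kernels $k_\zeta(z) = K_\Omega(z,\zeta)/\sqrt{K_\Omega(\zeta,\zeta)}$. These are unit vectors in $A^2(\Omega)$, and bounded intrinsic geometry implies (as should already be established in the paper) that $k_\zeta \rightharpoonup 0$ weakly as $\zeta \to \partial\Omega$, so compactness forces $\|H_\phi k_\zeta\|_{L^2} \to 0$. For $r$ small enough, bounded geometry gives $|k_\zeta(z)|^2 \gtrsim K_\Omega(\zeta,\zeta)$ on $\Bb_\Omega(\zeta,r)$, so $k_\zeta$ is non-vanishing there and $h_\zeta := P_\Omega(\phi\, k_\zeta)/k_\zeta$ is holomorphic on $\Bb_\Omega(\zeta,r)$. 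Then
\begin{align*}
\|H_\phi k_\zeta\|^2 \;\geq\; \int_{\Bb_\Omega(\zeta,r)} |\phi - h_\zeta|^2\, |k_\zeta|^2\, d\mu \;\gtrsim\; \int_{\Bb_\Omega(\zeta,r)} |\phi - h_\zeta|^2\, dV_\Omega,
\end{align*}
using the comparison $K_\Omega(z,z)\, d\mu \asymp dV_\Omega$ on Bergman balls of bounded radius, another consequence of bounded intrinsic geometry. Hence the infimum in (2) tends to $0$.

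For $(2) \Rightarrow (1)$, I would fix $\epsilon > 0$ and use (2) to find a compact set $K_\epsilon \subset \Omega$ outside of which the local approximation is bounded by $\epsilon$. Pick a cover $\{\Bb_\Omega(\zeta_j, r/2)\}$ of $\Omega$ by Bergman balls with uniformly bounded multiplicity (available by bounded geometry), together with a subordinate partition of unity. Split $\phi = \phi_1 + \phi_2$ with $\phi_1$ supported in a neighborhood of $K_\epsilon$ and $\phi_2$ supported away from it. The Hankel operator $H_{\phi_1}$ is compact by the factorization $H_{\phi_1} = (I - P_\Omega) \circ M_{\phi_1}$ together with the Montel-type compact embedding $A^2(\Omega) \hookrightarrow L^2(\text{nbhd of } K_\epsilon)$ for holomorphic functions. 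To bound $\|H_{\phi_2}\|$ by $C\sqrt\epsilon$, for any $f \in A^2(\Omega)$ I would use $\|H_{\phi_2} f\|^2 = \inf_{g \in A^2} \|\phi_2 f - g\|^2$ and construct $g$ by gluing the local approximations $h_j f$ via the partition of unity, then applying the sub-mean-value inequality $|f(z)|^2 \lesssim \int_{\Bb_\Omega(z,r)} |f|^2\, dV_\Omega$ for $f \in A^2$ and the bounded multiplicity of the cover to obtain
\begin{align*}
\|H_{\phi_2} f\|^2 \;\lesssim\; \sum_j \int_{\Bb_\Omega(\zeta_j,r)} |\phi - h_j|^2\, |f|^2\, dV_\Omega \;\lesssim\; \epsilon \|f\|_{A^2}^2.
\end{align*}

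The main obstacle will be the gluing step in $(2) \Rightarrow (1)$: multiplying the local holomorphic approximants $h_j f$ by a partition of unity destroys holomorphy, so producing an honest competitor $g \in A^2(\Omega)$ requires either a $\bar\partial$-correction with $L^2$ estimates on $\Omega$ or a more delicate argument comparing $\phi_2 f$ to a sum of locally-holomorphic pieces modulo controllable errors. The bounded intrinsic geometry is precisely what allows the cutoff errors, the $\bar\partial$ estimates on Bergman balls, and the conversion between $d\mu$ and $dV_\Omega$ to all be controlled uniformly in $\zeta$.
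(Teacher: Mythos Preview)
Your $(1)\Rightarrow(2)$ argument matches the paper's, with one gap: since $\phi\in\Sc(\Omega)$ only guarantees that $\dom(H_\phi)$ is \emph{dense} in $A^2(\Omega)$, the normalized kernel $k_\zeta$ need not lie in $\dom(H_\phi)$, so $P_\Omega(\phi\,k_\zeta)$ may be undefined and your formula for $h_\zeta$ can fail. The paper repairs this by approximating each $s_{\zeta_m}$ in $A^2$ by some $f_m\in\dom(H_\phi)$ that is still non-vanishing on $\overline{\Bb_\Omega(\zeta_m,r)}$, and taking $h_m=f_m^{-1}P_\Omega(\phi f_m)$.

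For $(2)\Rightarrow(1)$ your decomposition is organized differently from the paper's. You cut $\phi$ spatially into a compactly supported piece $\phi_1$ (with $H_{\phi_1}$ compact by Montel) and a tail $\phi_2$ with $\norm{H_{\phi_2}}\lesssim\sqrt{\epsilon}$, exhibiting $H_\phi$ as a norm limit of compacts. The paper instead makes a single, $\epsilon$-independent decomposition of the \emph{symbol}: with a partition of unity $\{\hat\chi_m\}$ subordinate to a bounded-multiplicity Bergman-ball cover it sets $\phi_1=\sum_m\hat\chi_m h_m$ and $\phi_2=\sum_m\hat\chi_m(\phi-h_m)$. Then $M_{\phi_2}$ (hence $H_{\phi_2}$) is compact by a multiplication-operator criterion, and $H_{\phi_1}$ is compact because $\phi_1\in\Cc^1$ with $\int_{\Bb_\Omega(\zeta,r_2)}\norm{\bar\partial\phi_1}_{g_\Omega}^2\,dV_\Omega\to 0$; the key observation is that $\sum_m\bar\partial\hat\chi_m=0$, so $\bar\partial\phi_1$ involves only differences $h_m-h_n$ on overlaps, which are controlled by $\epsilon_m+\epsilon_n$. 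The $\bar\partial$-correction you flag as the main obstacle is precisely what the paper packages into this last step, via the global $L^2$-$\bar\partial$ estimate on $\Omega$ coming from the bounded-geometry potential. Your route uses the same ingredients and would also go through, though you will have to be careful in the transition region where your spatial cutoff passes from $0$ to $1$: there the partition-of-unity identity $\sum_j\bar\partial\chi_j=0$ interacts with the cutoff and the glued competitor picks up extra terms that must be shown to contribute only a compact piece. The paper's global symbol decomposition sidesteps this bookkeeping.
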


\begin{remark}
For a strongly pseudoconvex domain $\Omega \subset \Cb^d$, we will show that for any $r > 0$ there exists $C=C(r)>1$ such that: if $\zeta \in \Omega$, then
\begin{align}
\label{eqn:volume_equivalence}
\frac{1}{C} dV_\Omega \leq \frac{1}{\mu(\Bb_\Omega(\zeta,r))} d\mu \leq C dV_\Omega
\end{align}
on $\Bb_\Omega(\zeta,r)$, see Theorem~\ref{thm:volume_equivalence} below. Hence Theorem~\ref{thm:main} is a true generalization of Li's theorem. 
\end{remark}

In the continuous category, Theorem~\ref{thm:main} simplifies to the following.

\begin{theorem}\label{thm:main_C0} Suppose $\Omega \subset \Cb^d$ is a bounded domain with bounded intrinsic geometry, $\partial \Omega$ is $\Cc^0$, and $\phi \in \Cc(\overline{\Omega})$. Then the following are equivalent: 
\begin{enumerate}
\item $H_\phi$ is a compact operator on $A^2(\Omega)$,  
\item $\phi$ is holomorphic on every analytic variety in $\partial\Omega$. 
\end{enumerate}
\end{theorem}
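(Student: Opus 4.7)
The plan is to deduce Theorem~\ref{thm:main_C0} from Theorem~\ref{thm:main} by a normal-family argument on the Bergman balls, using continuity of $\phi$ on $\overline{\Omega}$. The key input from bounded intrinsic geometry, as in~\cite{Z2021}, is that for each $r > 0$ there is a uniform ``Bergman chart'': a fixed model domain $U \subset \Cb^d$ and, for each $\zeta \in \Omega$, a holomorphic map $\pi_\zeta : U \to \Omega$ with $\pi_\zeta(0) = \zeta$ whose image is comparable to $\Bb_\Omega(\zeta, r)$ and which distorts the Bergman metric by a uniformly bounded amount; I also use that the Bergman metric on $\Omega$ is complete.

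For the direction ``$\phi$ holomorphic on every boundary variety implies $H_\phi$ compact'', I verify condition (2) of Theorem~\ref{thm:main}. Assume for contradiction there exist $\delta > 0$ and $\zeta_n \to \zeta_\infty \in \partial\Omega$ with $\int_{\Bb_\Omega(\zeta_n,r)} \abs{\phi - h}^2 \, dV_\Omega \geq \delta$ for every $h \in \Hol(\Bb_\Omega(\zeta_n, r))$. By the uniform distortion bound, a subsequence of $\pi_{\zeta_n}$ converges locally uniformly on $U$ to a holomorphic map $\pi_\infty : U \to \overline{\Omega}$ with $\pi_\infty(0) = \zeta_\infty$. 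Completeness of the Bergman metric forces $\pi_\infty(U) \subset \partial\Omega$: an interior point $w \in \pi_\infty(U) \cap \Omega$ would give $d_{\mathrm{Berg}}(w, \zeta_n) \leq r + o(1)$, contradicting $d_{\mathrm{Berg}}(w, \partial \Omega) = \infty$. If $\pi_\infty$ is constant, continuity of $\phi$ on $\overline{\Omega}$ gives that $\phi$ converges uniformly to $\phi(\zeta_\infty)$ on $\Bb_\Omega(\zeta_n, r)$, so the constant $h = \phi(\zeta_\infty)$ drives the $L^2$-error to zero---a contradiction. If $\pi_\infty$ is non-constant, the hypothesis (applied, near regular points of $\pi_\infty$, to the complex submanifold of $\partial\Omega$ swept out by $\pi_\infty$) gives $\phi \circ \pi_\infty \in \Hol(U)$; pushing this forward through $\pi_{\zeta_n}$ and using uniform continuity of $\phi$ together with the bounded distortion of $\pi_{\zeta_n}$ produces holomorphic approximants on $\Bb_\Omega(\zeta_n, r)$ with $L^2$-error $o(1)$, again contradicting the lower bound.

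For the reverse direction, I argue the contrapositive. Suppose some analytic variety $V \subset \partial\Omega$ carries a restriction $\phi|_V$ that is not holomorphic; then there is a non-constant holomorphic disk $g : \Db \to V$ with $\phi \circ g$ not holomorphic, so for some compact $K \subset \Db$,
\[
\inf_{h \in \Hol(K)} \int_K \abs{\phi \circ g - h}^2 \, d\lambda \geq 2\delta > 0.
\]
Using a local graph representation of $\partial\Omega$ in a coordinate chart near $g(0)$, I translate $g$ by a small inward vector to produce holomorphic disks $g_t : K \to \Omega$ with $g_t \to g$ uniformly on $K$ as $t \to 0^+$. Continuity of $\phi$ on $\overline{\Omega}$ preserves a lower bound $\geq \delta$ on the holomorphic $L^2$-approximation error of $\phi \circ g_t$ on $K$. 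Since $K$ has bounded Poincar\'e diameter and each $g_t$ is holomorphic, $g_t(K)$ has bounded Kobayashi---and hence, by~\cite{Z2021}, bounded Bergman---diameter, so $g_t(K) \subset \Bb_\Omega(g_t(0), r)$ for some fixed $r$. A change of variables then converts this into a uniform positive lower bound on the holomorphic $L^2$-approximation error of $\phi$ on $\Bb_\Omega(g_t(0), r)$; since $g_t(0) \to g(0) \in \partial\Omega$, this contradicts condition (2) of Theorem~\ref{thm:main}, which holds given that $H_\phi$ is compact.

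The main obstacle I anticipate is ensuring, in the forward direction, that the non-constant limit $\pi_\infty$ actually lies in $\partial\Omega$ and that the holomorphy hypothesis applies to it; the first relies on completeness of the Bergman metric (part of bounded intrinsic geometry), and the second on interpreting ``analytic variety in $\partial\Omega$'' generously enough to include images of holomorphic maps into $\partial\Omega$ at their regular points. The inward-translation step in the converse direction is technically delicate for merely $\Cc^0$ boundary but reduces to a standard graph-chart computation.
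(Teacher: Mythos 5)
Your forward direction (boundary holomorphy $\Rightarrow$ compactness) is essentially the paper's argument: pass to a locally uniform limit $\Phi$ of the uniform charts $\Phi_{\zeta_m}$, use properness of the Bergman distance to force $\Phi(\Bb)\subset\partial\Omega$, and pull back $\phi\circ\Phi$ as the holomorphic approximant. (Your worry about ``regular points'' is unnecessary: the hypothesis is stated as $\phi\circ F$ holomorphic for \emph{every} holomorphic $F:\Db\rightarrow\partial\Omega$, so it applies to every disk in $\Phi(\Bb)$ directly, constant or not.)

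The reverse direction has a genuine gap at the ``change of variables'' step. For $d>1$ the set $g_t(K)$ is a one-dimensional analytic disk of Lebesgue measure zero inside the $2d$-real-dimensional ball $\Bb_\Omega(g_t(0),r)$, so the quantity $\int_{\Bb_\Omega(g_t(0),r)}\abs{\phi-h}^2\,dV_\Omega$ does not transform by any change of variables into $\int_K\abs{\phi\circ g_t-h\circ g_t}^2\,d\lambda$; smallness of the former does not a priori control the restriction of $\phi-h$ to the measure-zero disk. (One could try to rescue this with a uniform modulus of continuity for $\phi-h$ --- uniform for $\phi$ since $\phi\in\Cc(\overline\Omega)$, and for $h$ via Cauchy estimates from a uniform $L^2$ bound --- but that is a nontrivial extra argument you neither supply nor identify as the delicate point.) The paper avoids the dimension mismatch entirely: its Lemma~\ref{lem:holo_varieties_in_bd} uses exactly your inward-translation $\zeta_m=\frac{\delta_0}{m}\nu+F(z_0)$ together with the Kobayashi--Bergman comparison (Theorem~\ref{thm:kob_vs_bergman}) to show the translated disk is swallowed by a single full-dimensional chart $\Phi_{\zeta_m}(\frac12\Bb)$, so that the boundary disk $F(\Db(z_0,\delta))$ lies in the image of a limit chart $\Phi(\Bb)$. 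It then pulls the approximants $h_m$ back through the full-dimensional maps $\Phi_{\zeta_m}$, where the $L^2$ integrals over $r_1\Bb\subset\Cb^d$ are honestly comparable (with Jacobian controlled by Theorem~\ref{thm:good_charts}), extracts a holomorphic limit $\hat h$ equal to $\phi\circ\Phi$ by Fatou, and only then restricts the resulting identity to the disk. You should restructure your converse along these lines: prove holomorphy of $\phi\circ\Phi$ on the full ball first, and deduce holomorphy of $\phi\circ F$ from the inclusion $F(\Db(z_0,\delta))\subset\Phi(\Bb)$, rather than trying to compare integrals over sets of different dimensions.
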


\begin{remark} To be precise, we say:
 \begin{enumerate}
\item $\partial \Omega$ is $\Cc^0$,  if for every point $x \in \partial\Omega$ there exists a neighborhood $U$ of $x$ and there exists a linear change of coordinates which makes $U \cap \partial \Omega$ the graph of a $\Cc^{0}$ function. 
\item $\phi$ is holomorphic on every analytic variety in $\partial\Omega$, if for every holomorphic map $F: \Db \rightarrow \partial \Omega$ the composition $\phi \circ F$ is holomorphic.  
\end{enumerate}
\end{remark} 

\begin{remark}
Theorem~\ref{thm:main_C0} is related to a number of prior results for convex domains:
\begin{enumerate} 
\item For smoothly bounded convex domains with symbols in $\Cc^\infty(\overline{\Omega})$, \v{C}u\v{c}kovi\'{c}-\c{S}ahuto\u{g}lu~\cite{CS2009} proved that (1) $\Rightarrow$ (2). 
\item For bounded convex domains with symbols in $\Cc(\overline{\Omega})$, \c{C}elik-\c{S}ahuto\u{g}lu-Straube~\cite{CSS2020, CSS2020b} proved that (1) $\Rightarrow$ (2) and also established an analogous result for Hankel operators on $(0,q)$-forms.
\item For bounded convex domains with symbols in $\Cc^1(\overline{\Omega})$, \c{C}elik-\c{S}ahuto\u{g}lu-Straube~\cite{CSS2020c}  proved that (2) $\Rightarrow$ (1) and also established an analogous result for Hankel operators on $(0,q)$-forms.

\end{enumerate}
It appears that even in the special case of convex domains the implication (2) $\Rightarrow$ (1) was unknown for symbols in $\Cc(\overline{\Omega})$. 
\end{remark}

We can also characterize the Hankel operators that extend to bounded operators. 

\begin{theorem}\label{thm:main_2} Suppose $\Omega \subset \Cb^d$ is a bounded domain with bounded intrinsic geometry and $\phi \in \Sc(\Omega)$. Then the following are equivalent: 
\begin{enumerate}
\item $H_\phi$ extends to a bounded operator on $A^2(\Omega)$,  
\item for some $r > 0$ 
$$\sup_{\zeta \in \Omega} \, \inf\left\{ \int_{\Bb_\Omega(\zeta,r)} \abs{\phi-h}^2  dV_\Omega  : h \in {\rm Hol}\left(\Bb_\Omega(\zeta,r)\right)\right\} <+\infty.$$ 
\end{enumerate}
\end{theorem}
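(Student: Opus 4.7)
The plan is to prove the two implications separately via a template that closely parallels the argument for Theorem~\ref{thm:main}, with ``tending to zero at the boundary'' replaced throughout by ``bounded uniformly over $\Omega$.'' Two standard consequences of bounded intrinsic geometry underpin the whole proof: (i) the volume comparison $dV_\Omega \asymp K_\Omega(\zeta,\zeta)\, d\mu$ on every Bergman ball $\Bb_\Omega(\zeta,r)$ of a fixed radius $r$, and (ii) the non-vanishing estimate $|k_\zeta|^2 \geq c_0 K_\Omega(\zeta,\zeta)$ on $\Bb_\Omega(\zeta,r_0)$, where $k_\zeta := K_\Omega(\cdot,\zeta)/K_\Omega(\zeta,\zeta)^{1/2}$ is the normalized Bergman kernel (so $\|k_\zeta\|_{A^2(\Omega)} = 1$).

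For (1) $\Rightarrow$ (2), I would test $H_\phi$ on the family $\{k_\zeta\}_{\zeta \in \Omega}$. Since $\|H_\phi\|^2 \geq \|H_\phi(k_\zeta)\|^2 = \|\phi k_\zeta - g_\zeta\|_{L^2(\mu)}^2$ where $g_\zeta := P_\Omega(\phi k_\zeta) \in A^2(\Omega)$, and since $k_\zeta$ is non-vanishing on $\Bb_\Omega(\zeta,r_0)$ by (ii), the ratio $h_\zeta := g_\zeta/k_\zeta$ is holomorphic on $\Bb_\Omega(\zeta,r_0)$. Combining (i) and (ii) then gives
\begin{equation*}
\int_{\Bb_\Omega(\zeta,r_0)} |\phi - h_\zeta|^2 \, dV_\Omega = \int_{\Bb_\Omega(\zeta,r_0)} \frac{|\phi k_\zeta - g_\zeta|^2}{|k_\zeta|^2} \, dV_\Omega \leq C \int_{\Bb_\Omega(\zeta,r_0)} |\phi k_\zeta - g_\zeta|^2 \, d\mu \leq C\|H_\phi\|^2,
\end{equation*}
uniformly in $\zeta$, which is condition (2).

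For (2) $\Rightarrow$ (1), I would discretize using the Bergman geometry. Bounded intrinsic geometry provides a lattice $\{\zeta_i\} \subset \Omega$ with $\{\Bb_\Omega(\zeta_i,r/4)\}$ pairwise disjoint and $\{B_i := \Bb_\Omega(\zeta_i,r)\}$ of bounded multiplicity, together with a smooth partition of unity $\{\chi_i\}$ subordinate to $\{B_i\}$ satisfying $|\bar\partial\chi_i|_{g_\Omega} \leq C$ uniformly. Condition (2) lets me select, for each $i$, an $h_i \in \Hol(B_i)$ with $\int_{B_i}|\phi-h_i|^2 \, dV_\Omega \leq M$. For $f \in \dom(H_\phi)$ define $u := \sum_i \chi_i h_i f$. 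Because $\sum_i \bar\partial \chi_i = 0$ and each $h_i f$ is holomorphic on $B_i$,
\begin{equation*}
\bar\partial u = \sum_i (\bar\partial \chi_i)(h_i - \phi) f.
\end{equation*}
I then solve $\bar\partial v = \bar\partial u$ with $v \perp A^2(\Omega)$, which makes $u - v$ a holomorphic $L^2$ function and hence an element of $A^2(\Omega)$. Consequently
\begin{equation*}
H_\phi(f) = (\id - P_\Omega)(\phi f) = (\id - P_\Omega)(\phi f - u) + v,
\end{equation*}
and I would bound both pieces by $\sqrt{M}\|f\|_{A^2}$ using the sub-mean-value inequality $\sup_{B_i}|f|^2 \leq C\mu(B_i')^{-1}\|f\|_{L^2(B_i')}^2$ on a slightly enlarged ball $B_i'$, the volume comparison (i), and the bounded overlap of $\{B_i'\}$. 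Since $\dom(H_\phi)$ is dense, $H_\phi$ extends to a bounded operator.

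The main obstacle is the $\bar\partial$-step: one needs an $L^2$-solution $v$ of $\bar\partial v = \bar\partial u$, orthogonal to $A^2(\Omega)$, obeying $\|v\|_{L^2(\mu)}^2 \leq C \int_\Omega |\bar\partial u|_{g_\Omega}^2 \, d\mu$ with $C$ depending only on the bounded-geometry constants---so that the $g_\Omega$-bound on $|\bar\partial\chi_i|$ feeds cleanly into the final estimate. This is exactly the Hörmander-type estimate that bounded intrinsic geometry is designed to deliver (presumably via a weight adapted to the Bergman Kähler form), and the same estimate will also appear in the proof of Theorem~\ref{thm:main}; the only difference here is that it must be coupled with the uniform bound $M$ rather than a $\zeta \to \partial\Omega$ vanishing hypothesis. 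Every other step reduces to the volume equivalence of the Remark, the reproducing property, and the Bergman sub-mean-value inequality on balls of fixed radius.
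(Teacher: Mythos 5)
Your proposal follows the same route as the paper's: the paper proves Theorem~\ref{thm:main_2} by repeating the proof of Theorem~\ref{thm:main} with ``tends to $0$ as $\zeta\to\partial\Omega$'' replaced by ``uniformly bounded'', and your two implications --- testing $H_\phi$ on normalized kernels for (1)$\Rightarrow$(2), and the decomposition $\phi f=\sum_i\chi_ih_if+\sum_i\chi_i(\phi-h_i)f$ with a H\"ormander correction for (2)$\Rightarrow$(1) --- are exactly that argument (your $u$ is $\phi_1 f$ and your $\phi f-u$ is $\phi_2 f$ in the paper's notation, with $\phi_2$ handled by Proposition~\ref{prop:multiplication_operators} and $\phi_1$ by Proposition~\ref{prop:smooth_symbols} via Corollary~\ref{cor:existence_big-domains}).

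Two details need repair. First, in (1)$\Rightarrow$(2) you apply $H_\phi$ and $P_\Omega$ directly to $k_\zeta$, but $\phi\in\Sc(\Omega)$ only guarantees that $H_\phi$ is \emph{densely} defined, so $\phi k_\zeta$ need not lie in $L^2(\Omega)$ and $P_\Omega(\phi k_\zeta)$ need not make sense; the paper first replaces $k_\zeta$ by a nearby $f\in{\rm dom}(H_\phi)$ with $\abs{f}\asymp\abs{k_\zeta}$ on $\Bb_\Omega(\zeta,r)$ and $\norm{H_\phi(f)}_2$ controlled by the operator norm, and only then forms $h_\zeta=f^{-1}P_\Omega(\phi f)$. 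Second, and more substantively, the sub-mean-value inequality you invoke, $\sup_{B_i}\abs{f}^2\le C\mu(B_i')^{-1}\norm{f}^2_{L^2(B_i')}$, is not available on a general domain with bounded intrinsic geometry: the uniform statement that actually holds (Proposition~\ref{prop:sMVT}) is $\abs{f(z)}^2\lesssim\Bf_\Omega(z,z)\int_{B_i'}\abs{f}^2d\mu$, and replacing $\Bf_\Omega(z,z)$ by $\mu(B_i')^{-1}$ is precisely the self-bounded-gradient condition characterized in Theorem~\ref{thm:char_self_bd_gradient}, which can fail for the domains considered here. The fix is harmless --- the factor $\Bf_\Omega(z,z)$ combines with $d\mu$ to produce $dV_\Omega$ via your comparison (i), which is exactly why hypothesis (2) is stated with $dV_\Omega$ --- but as literally written your final estimate would only close under the additional hypothesis of Corollary~\ref{cor:equivalence_of_averages}.
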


In the last part of the paper we use the Bergman kernel $\Bf_\Omega$ to characterize the domains  $\Omega \subset \Cb^d$ with bounded intrinsic geometry where the estimates in Equation~\eqref{eqn:volume_equivalence} hold.

\begin{theorem}\label{thm:volume_equivalence}(see Theorem~\ref{thm:char_self_bd_gradient} below) Suppose $\Omega \subset \Cb^d$ is a bounded domain with bounded intrinsic geometry. Then the following are equivalent: 
\begin{enumerate}
\item $\log \Bf_\Omega(z,z)$ has self bounded gradient,
\item for every $r > 0$ there exists $C=C(r) > 1$ such that: if $\zeta \in \Omega$, then
\begin{align*}
\frac{1}{C} dV_\Omega \leq \frac{1}{\mu(\Bb_\Omega(\zeta,r))} d\mu \leq C dV_\Omega
\end{align*}
on $\Bb_\Omega(\zeta, r)$. 
\end{enumerate}
\end{theorem}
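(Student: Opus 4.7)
The plan is to pull both conditions back via the holomorphic charts supplied by bounded intrinsic geometry, reducing the equivalence to a statement about the oscillation of a pluriharmonic function on a fixed Euclidean ball. For each $\zeta \in \Omega$, bounded intrinsic geometry gives a holomorphic embedding $\psi_\zeta \colon U \to \Omega$ from a fixed Euclidean ball $U \subset \Cb^d$ centered at $0$, with $\psi_\zeta(0) = \zeta$, such that $g_\zeta := \psi_\zeta^* g_\Omega$ is uniformly $C^k$-comparable to the Euclidean metric and normalized so that $g_\zeta(0) = \Id$. Set $J_\zeta := \det D\psi_\zeta$, a nonvanishing holomorphic function on $U$, and $\phi_\zeta(z) := \log \Bf_\Omega(\psi_\zeta(z), \psi_\zeta(z))$.

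First, using the identity $\det g_\Omega(\psi_\zeta(z)) = \det g_\zeta(z)/\abs{J_\zeta(z)}^2$, the fact that $\det g_\zeta \asymp 1$ uniformly, and the fact that $\psi_\zeta^{-1}(\Bb_\Omega(\zeta, r))$ is uniformly comparable to a fixed Euclidean ball, I would rephrase condition $(2)$ as: \emph{the pluriharmonic function $\log\abs{J_\zeta}^2$ has uniformly bounded oscillation on a fixed subball $U' \subset U$, uniformly in $\zeta$}.

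The bridge between the two conditions is the uniform two-sided estimate
\begin{equation*}
\Bf_\Omega(\psi_\zeta(z), \psi_\zeta(z)) \, \abs{J_\zeta(z)}^2 \asymp 1 \quad \text{on } U,
\end{equation*}
equivalently, $u_\zeta := \phi_\zeta + \log\abs{J_\zeta}^2$ is uniformly bounded in $C^0(U)$. The upper bound follows from Bergman-kernel monotonicity $\Bf_\Omega \leq \Bf_{\psi_\zeta(U)}$ combined with the biholomorphic transformation law on $\psi_\zeta(U)$. The lower bound, which I expect to be the \emph{main obstacle}, requires an Ohsawa--Takegoshi extension with uniform constants: transplant an extremal element of $A^2(\psi_\zeta(U))$ peaking at $\psi_\zeta(z)$ into an element of $A^2(\Omega)$ of controlled norm; bounded intrinsic geometry furnishes the plurisubharmonic weights with the uniform control needed to run this extension.

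With the $C^0$-bound on $u_\zeta$ established, the equivalence $(1) \Leftrightarrow (2)$ is closed as follows. For $(1) \Rightarrow (2)$: condition $(1)$ says $\log\Bf_\Omega$ is globally Lipschitz with respect to the Bergman distance, so $\phi_\zeta$ is uniformly Lipschitz on $U'$ (since $g_\zeta \asymp$ Euclidean), hence has uniformly bounded oscillation there, and combined with $\norm{u_\zeta}_{C^0(U)} \leq C$ so does $\log\abs{J_\zeta}^2$. For $(2) \Rightarrow (1)$: condition $(2)$ yields bounded oscillation of $\phi_\zeta = u_\zeta - \log\abs{J_\zeta}^2$ on $U'$; since $\Delta \phi_\zeta = 4\,\tr g_\zeta$ is uniformly bounded, interior elliptic regularity applied to $\phi_\zeta - \phi_\zeta(0)$ yields a uniform $C^1$-bound on a smaller ball. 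In particular $\abs{\nabla \phi_\zeta(0)}$ is uniformly bounded, which via the normalization $g_\zeta(0) = \Id$ is condition $(1)$ at the point $\zeta$.
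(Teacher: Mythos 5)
Your outline is sound and, at bottom, runs on the same engine as the paper's proof (Theorem~\ref{thm:char_self_bd_gradient}): pull everything back through the uniform holomorphic charts $\Phi_\zeta : \Bb \rightarrow \Omega$ of Theorem~\ref{thm:good_charts} and exploit the uniform two-sided bound $\Bf_\Omega(\Phi_\zeta(w),\Phi_\zeta(w))\abs{\det \Phi_\zeta^\prime(w)}^2 \asymp 1$. The main difference is in what is taken as known: the paper imports that bound wholesale from \cite{Z2021} (Theorem~\ref{thm:Bergman_kernel_est}), whereas you propose to re-derive it (monotonicity of Bergman kernels for one inequality, a uniform Ohsawa--Takegoshi/H\"ormander extension for the other). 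That is indeed how such estimates are obtained, but in the context of this paper it is a citation, not the ``main obstacle.'' Where you genuinely diverge is in (2) $\Rightarrow$ (1): the paper routes through its condition (5) (a uniform bound on $\partial_w \log\abs{\det\Phi_\zeta^\prime(w)}$ at $w=0$), using the uniform \emph{derivative} bounds on $\beta_\zeta$ from Theorem~\ref{thm:Bergman_kernel_est}(2) together with a normal-families argument, while you use bounded oscillation of $\phi_\zeta$ plus the uniform bound on its complex Hessian (which equals $\Phi_\zeta^* g_\Omega$) and interior elliptic estimates to control $\nabla\phi_\zeta(0)$. Your route is slightly more economical in that it needs only the $C^0$ bound on $\beta_\zeta$ and not its derivatives; the paper's route has the side benefit of producing the extra equivalent condition (5).

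One small gap: condition (2) is quantified over \emph{every} $r>0$, but your chart-based reformulation (bounded oscillation of $\log\abs{J_\zeta}^2$ on a fixed subball) only captures radii small enough that $\Bb_\Omega(\zeta,r)$ sits inside $\Phi_\zeta(\Bb)$ with $\Phi_\zeta^{-1}(\Bb_\Omega(\zeta,r))$ comparable to a fixed Euclidean ball. For the implication (1) $\Rightarrow$ (2) at large $r$ you need an additional step: either the covering argument the paper runs (Case 2 of its Lemma (2) $\Rightarrow$ (3), via Corollary~\ref{cor:discretization}), or the observation that once $\Bf_\Omega(z,z)\asymp\Bf_\Omega(\zeta,\zeta)$ on $\Bb_\Omega(\zeta,r)$ one gets $\mu(\Bb_\Omega(\zeta,r))\,\Bf_\Omega(\zeta,\zeta)\asymp V_\Omega(\Bb_\Omega(\zeta,r))\asymp 1$ from Proposition~\ref{prop:volume_comp} together with the Bishop--Gromov and Croke volume bounds already invoked in Proposition~\ref{prop:separated_sets}. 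The converse direction is unaffected, since (2) for a single small $r$ suffices to deduce (1).
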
 

\begin{remark} In Theorem~\ref{thm:char_self_bd_gradient} we also provide several other equivalent statements. \end{remark}

Using Theorems~\ref{thm:main} and~\ref{thm:volume_equivalence} we obtain the following direct extension of Li's theorem.  

\begin{corollary}\label{cor:equivalence_of_averages} Suppose $\Omega \subset \Cb^d$ is a bounded domain with bounded intrinsic geometry, $\log \Bf_\Omega(z,z)$ has self bounded gradient, and $\phi \in \Sc(\Omega)$. Then the following are equivalent: 
\begin{enumerate}
\item $H_\phi$ extends to a compact operator on $A^2(\Omega)$,  
\item for some $r > 0$ 
$$\lim_{\zeta \rightarrow \partial \Omega} \inf\left\{\frac{1}{\mu(\Bb_\Omega(\zeta,r))} \int_{\Bb_\Omega(\zeta,r)} \abs{\phi-h}^2  d\mu : h \in {\rm Hol}\left(\Bb_\Omega(\zeta,r)\right)\right\} = 0.$$ 
\end{enumerate}
\end{corollary}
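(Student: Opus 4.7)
The plan is to derive Corollary~\ref{cor:equivalence_of_averages} as a direct consequence of the two preceding main results: Theorem~\ref{thm:main} (which gives the equivalence with integrals against the Bergman volume form $dV_\Omega$) and Theorem~\ref{thm:volume_equivalence} (which, under the self bounded gradient hypothesis, transports between $dV_\Omega$ and the normalized Lebesgue measure on Bergman balls of fixed radius).

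Concretely, fix $r > 0$ and let $C = C(r) > 1$ be the constant provided by Theorem~\ref{thm:volume_equivalence}(2), so that
\begin{equation*}
\frac{1}{C}\, dV_\Omega \;\leq\; \frac{1}{\mu(\Bb_\Omega(\zeta,r))}\, d\mu \;\leq\; C\, dV_\Omega
\end{equation*}
on $\Bb_\Omega(\zeta,r)$ for every $\zeta \in \Omega$. For any $\zeta \in \Omega$ and any $h \in \Hol(\Bb_\Omega(\zeta,r))$, integrating $\abs{\phi-h}^2$ against these measures gives
\begin{equation*}
\frac{1}{C} \int_{\Bb_\Omega(\zeta,r)} \abs{\phi-h}^2 dV_\Omega
\;\leq\; \frac{1}{\mu(\Bb_\Omega(\zeta,r))} \int_{\Bb_\Omega(\zeta,r)} \abs{\phi-h}^2 d\mu
\;\leq\; C \int_{\Bb_\Omega(\zeta,r)} \abs{\phi-h}^2 dV_\Omega.
\end{equation*}
Since $C$ is independent of $\zeta$ and $h$, taking the infimum over $h \in \Hol(\Bb_\Omega(\zeta,r))$ preserves these two-sided bounds. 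Hence the quantity appearing in condition (2) of the corollary tends to $0$ as $\zeta \to \partial\Omega$ if and only if the corresponding $dV_\Omega$-quantity in condition (2) of Theorem~\ref{thm:main} tends to $0$ (for the same $r$, and thus for \emph{some} $r > 0$ on either side).

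Finally, Theorem~\ref{thm:main} asserts precisely that compactness of $H_\phi$ is equivalent to the $dV_\Omega$-version of condition (2). Chaining these two equivalences yields the corollary. There is no substantive obstacle here: the self bounded gradient hypothesis is used exactly to invoke Theorem~\ref{thm:volume_equivalence}, and the uniformity of the constant $C(r)$ in $\zeta$ is what allows us to pass the measure comparison through both the infimum over $h$ and the limit as $\zeta \to \partial\Omega$.
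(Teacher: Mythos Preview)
Your proof is correct and matches the paper's approach exactly: the paper states the corollary as an immediate consequence of Theorems~\ref{thm:main} and~\ref{thm:volume_equivalence} without giving further details, and your argument supplies precisely the two-line measure comparison that makes this deduction explicit.
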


It is known that $\log \Bf_\Omega(z,z)$ has self bounded gradient when $\Omega$ is a strongly pseudoconvex domain~\cite[Proposition 3.4]{D1994}, a pseudoconvex finite type domain in $\Cb^2$~\cite{D1997}, a Kobayashic hyperbolic convex domain~\cite[Proposition 4.6]{Z2021}, or a Kobayashi hyperbolic $\Cb$-convex domain~\cite[Proposition 4.12]{Z2021}. To the best of our knowledge, Corollary~\ref{cor:equivalence_of_averages} is new in all but the first case (which is Li's theorem). 

There also exist domains with bounded intrinsic geometry where  $\log \Bf_\Omega(z,z)$ does not have self bounded gradient, see~\cite[Proposition 1.11]{Z2021}.

Theorem~\ref{thm:volume_equivalence} says that, in general, the local $L^2$-spaces considered in Li's theorem and in Theorem~\ref{thm:main} are not uniformly comparable, but it is unclear if it is possible to construct a symbol which satisfies one condition but not the other.

\subsection{Structure of the paper} In Section~\ref{sec:prelim} we set our notations and recall a few classical results. In Section~\ref{sec:DBIG} we recall the definition of domains with bounded intrinsic geometry and some results from~\cite{Z2021}. Then we prove a number of new results about these domains. 

Sections \ref{sec:smooth_symbols}, \ref{sec:multiplication_operators}, \ref{sec:pf_of_thm_main}, and \ref{sec:pf_of_thm_main2} are devoted to the proofs of Theorems~\ref{thm:main} and~\ref{thm:main_2}. The proofs are similar to the arguments in~\cite{Li1994} (which in turn are similar to the arguments in~\cite{BBC1990, Li1992,LiLuecking1994,Luecking1992}) and in some sense the most important results of this paper are in Section~\ref{sec:DBIG} which give us the tools necessary to adapt Li's proof. 

In Section \ref{sec:multiplication_operators} we characterize the $L^2$-symbols whose multiplication operator is compact (respectively bounded). Using this result, in Section \ref{sec:smooth_symbols} we prove a sufficient condition for a $\Cc^1$-smooth symbol to have compact (respectively bounded) Hankel operator. 

The implication (1) $\Rightarrow$ (2) in Theorems~\ref{thm:main} and~\ref{thm:main_2} is fairly straightforward (using the results in Section~\ref{sec:DBIG}). To show that (2) $\Rightarrow$ (1), we construct a special decomposition of our symbol $\phi = \phi_1 + \phi_2$. Then using the results of Sections~\ref{sec:multiplication_operators} and~\ref{sec:smooth_symbols}  we show that $H_{\phi_1}$ and $H_{\phi_2}$ are both compact. 

In Section~\ref{sec:pf_of_thm_C0} we prove Theorem~\ref{thm:main_C0} using Theorem~\ref{thm:main}. Finally, in Section~\ref{sec:char_SBG} we consider domains with bounded intrinsic geometry where the standard potential of the Bergman metric has self bounded gradient. 

\subsection*{Acknowledgements} This research was partially supported by grants DMS-2105580 and DMS-2104381 from the
National Science Foundation.

\section{Preliminaries}\label{sec:prelim}

\subsection{Notations}  In this section we fix any possibly ambiguous notation. 

\subsubsection{The Bergman kernel, metric, volume, and distance:} We will use the following notations. 

\begin{definition} Suppose $\Omega \subset \Cb^d$ is a pseudoconvex domain. 
\begin{enumerate}
\item Let $\Bf_\Omega$ denote the Bergman kernel on $\Omega$,
\item let $g_\Omega$ denote the Bergman metric on $\Omega$, 
\item let $V_\Omega$ denote the volume form induced by the Bergman metric, that is 
\begin{align*}
dV_\Omega = \abs{\det\left[ g_\Omega\left( \frac{\partial}{\partial z_j}, \frac{\partial}{\partial \bar{z}_k} \right)\right]} d\mu,
\end{align*}
\item let $\dist_\Omega$ denote the distance induced by the Bergman metric, and 
\item for $\zeta \in \Omega$ and $r \geq 0$ let 
\begin{align*}
\Bb_\Omega(\zeta, r) :=\{ z \in \Omega : \dist_\Omega(z,\zeta) < r\}
\end{align*}
denote the open ball of radius $r$ centered at $\zeta$ in the Bergman distance. 
\end{enumerate}
\end{definition} 

\subsubsection{Approximate inequalities:} Given functions $f,h : X \rightarrow \Rb$ we write $f \lesssim h$ or equivalently $h \gtrsim f$ if there exists a constant $C > 0$ such that $f(x) \leq C h(x)$ for all $x \in X$. Often times the set $X$ will be or include  a set of parameters (e.g. $m \in \Nb$). 

If $f \lesssim g$ and $g \lesssim f$ we write $f \asymp g$. 

\subsubsection{The Levi form:} Given a domain $\Omega \subset \Cb^d$ and a $\Cc^2$-smooth real valued function $f : \Omega \rightarrow \Rb$, the \emph{Levi form of $f$} is 
\begin{align*}
\Levi(f) = \sum_{1 \leq j,k \leq d} \frac{\partial^2 f}{\partial z_j \partial \bar{z}_k}dz_j \otimes d\bar{z}_k. 
\end{align*}
Notice that $f$ is plurisubharmonic if $\Lc(f) \geq 0$ and, by definition,
\begin{align*}
\Levi\left(\log \Bf_\Omega(z,z)\right)=g_\Omega.
\end{align*}

\subsubsection{Norms on 1-forms and functions with self bounded gradient}\label{sec:defn_SBG} Given a 1-form $\alpha$ on a domain $\Omega \subset \Cb^d$ and a Hermitian pseudo-metric $h$ on $\Omega$, one can define the pointwise norm 
\begin{align*}
\norm{\alpha_z}_h = \sup\left\{ \abs{\alpha_z(X)} : X \in \Cb^d, \, h_z(X,X) \leq 1\right\}.
\end{align*}
Then a $\Cc^2$ plurisubharmonic function $\lambda : \Omega \rightarrow \Rb$ is said to have \emph{self bounded gradient} if 
\begin{align*}
\norm{\partial \lambda}_{\Levi(\lambda)}
\end{align*}
is uniformly bounded on $\Omega$. This is equivalent to the existence of some $C > 0$ such that
\begin{align*}
\abs{\sum_{j=1}^d \frac{\partial \lambda}{\partial z_j} X_j }^2 \leq C \sum_{j,k=1}^d  \frac{\partial^2 \lambda}{\partial z_j \partial \bar{z}_k}X_j \bar{X}_k
\end{align*}
for all $X \in \Cb^d$. 

\subsection{Solutions to the $\bar{\partial}$-equation} We will use the following existence theorem for solutions to the $\bar{\partial}$-equation.

\begin{theorem}\label{thm:existence-gen} Suppose $\Omega \subset \Cb^d$ is a bounded pseudoconvex domain, $\lambda_1 : \Omega \rightarrow \Rb$ has self bounded gradient, and $\lambda_2: \Omega \rightarrow \{-\infty\}\cup \Rb$ is plurisubharmonic. There exists $C > 0$ which only depends on 
\begin{align*}
\sup_{z \in \Omega} \norm{\partial \lambda_1}_{\Levi(\lambda_1)}
\end{align*}
such that: if $\alpha \in L^{2,{\rm loc}}_{(0,1)}(\Omega)$ and $\bar{\partial}\alpha= 0$, then there is some $u \in L^{2,{\rm loc}}(\Omega)$ with $\bar{\partial}u = \alpha$ and 
\begin{align*}
\int_\Omega \abs{u}^2 e^{-\lambda_2} d\mu \leq C\int_\Omega \norm{\alpha}_{\Levi(\lambda_1)}^2 e^{-\lambda_2} d\mu
\end{align*}
assuming the right hand side is finite. 
\end{theorem}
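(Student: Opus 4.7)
The statement is a Donnelly--Fefferman type twisted $L^2$ estimate for $\bar{\partial}$; versions of it are due to Donnelly--Fefferman, Berndtsson, and Charpentier, and my plan is to adapt one of these arguments to the present formulation.

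First I would reduce to the smooth case. Exhaust $\Omega$ by relatively compact smoothly bounded strongly pseudoconvex subdomains $\Omega_j \nearrow \Omega$, regularize $\lambda_1$ so as to preserve the self bounded gradient constant up to arbitrarily small loss, and approximate $\lambda_2$ from above by a decreasing sequence of smooth plurisubharmonic functions. Once a uniform estimate is established in this smooth setting, a diagonal extraction using weak compactness in $L^2(\Omega, e^{-\lambda_2}d\mu)$ produces a solution $u$ of $\bar{\partial}u = \alpha$ with the claimed integrability, and upper semicontinuity of $\lambda_2$ (together with the local finiteness of $e^{-\lambda_2}d\mu$ away from $\{\lambda_2 = -\infty\}$) then ensures $u \in L^{2,{\rm loc}}(\Omega)$ automatically.

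In the smooth case the natural candidate is the $L^2$-minimal solution $u$ of $\bar{\partial}u = \alpha$ in the weighted Hilbert space $H := L^2(\Omega, e^{-\lambda_2}d\mu)$, characterized by $u \perp H \cap \ker \bar{\partial}$. By the standard Hahn--Banach duality, bounding $\norm{u}_H$ as claimed is equivalent to establishing the dual inequality
\begin{equation*}
\int_\Omega \norm{\beta}_{\Levi(\lambda_1)}^2\, e^{-\lambda_2}\, d\mu \leq C \left(\int_\Omega \abs{\bar{\partial}^*_{\lambda_2}\beta}^2 e^{-\lambda_2}\, d\mu + \int_\Omega \norm{\bar{\partial}\beta}_{\Levi(\lambda_1)}^2\, e^{-\lambda_2}\, d\mu\right)
\end{equation*}
for all smooth compactly supported $(0,1)$-forms $\beta$, with $C$ depending only on $\sup_{z\in\Omega} \norm{\partial\lambda_1}_{\Levi(\lambda_1)}$, where $\bar{\partial}^*_{\lambda_2}$ is the formal adjoint with respect to the weight $e^{-\lambda_2}$.

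The main obstacle is proving this dual inequality. The Bochner--Kodaira--Nakano identity applied with weight $e^{-\lambda_2}$ only produces the non-negative contribution $\Levi(\lambda_2)(\beta,\beta)$ on the right-hand side, which is insufficient since $\Levi(\lambda_2)$ could be zero. Following the Donnelly--Fefferman/Berndtsson twist, one runs the Bochner--Kodaira computation against the twisted test form $e^{c\lambda_1}\beta$ for a parameter $c > 0$. This yields an extra positive term of order $c\,\Levi(\lambda_1)(\beta,\beta)$ coming from $\Levi(c\lambda_1)$, but also an error term of the schematic form $c\norm{\partial\lambda_1}_{\Levi(\lambda_1)}\cdot \norm{\beta}_{\Levi(\lambda_1)}\cdot \abs{\bar{\partial}^*_{\lambda_2}\beta}$; the self bounded gradient hypothesis on $\lambda_1$ is precisely what keeps this coefficient finite. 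Optimizing $c$ and absorbing the cross term with Cauchy--Schwarz produces a positive net lower bound proportional to $\norm{\beta}_{\Levi(\lambda_1)}^2$ with a constant depending only on $\sup_{z \in \Omega}\norm{\partial\lambda_1}_{\Levi(\lambda_1)}$, as required.
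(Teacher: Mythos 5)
The paper does not prove this theorem itself; it cites it from McNeal--Varolin and McNeal (\cite{MV2015}, \cite{M2001}), where it is established via the twisted basic $\bar\partial$-estimate in the Ohsawa--Takegoshi/Donnelly--Fefferman tradition. Your overall strategy---reduce to a smooth exhaustion, pass to the dual inequality for compactly supported test forms, and then gain positivity from a Berndtsson-style twist whose error term is controlled by the self bounded gradient---is indeed the right family of arguments, and it is close in spirit to the cited references.

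However, the central step as you describe it does not quite work. Plugging $e^{c\lambda_1}\beta$ into the Bochner--Kodaira identity \emph{with weight $\lambda_2$} does not produce a curvature term $\Levi(c\lambda_1)$: the curvature term in that identity is $\Levi(\lambda_2)$, which is only nonnegative and is of no use here. To see $\Levi(\lambda_1)$ appear you must either (i) simultaneously change the weight to $\lambda_2 + 2c\lambda_1$ so that the $e^{\pm c\lambda_1}$ factors coming from $\beta' = e^{c\lambda_1}\beta$ cancel against the new weight, at which point the curvature is $\Levi(\lambda_2 + 2c\lambda_1) \geq 2c\,\Levi(\lambda_1)$, or (ii) use the twisted basic estimate with twist function $\tau = e^{-c\lambda_1}$ and weight $\lambda_2$, in which case the positivity comes from $-\Levi(\tau)$; note the sign of the exponent in the twist is opposite to what you wrote, and this is not cosmetic---with $\tau = e^{+c\lambda_1}$ the term $-\Levi(\tau)$ is entirely negative. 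Beyond the sign, the more serious omission is that you never address why the factors $e^{\pm c\lambda_1}$, which are unbounded on $\Omega$, do not poison the final estimate: that cancellation (either built into the matched weight-and-test-form substitution of (i), or into the precise balance of $\tau$, $A$, and $1/A$ in (ii)) is the actual content of the Donnelly--Fefferman/Berndtsson theorem and is where the hypothesis $\sup \norm{\partial\lambda_1}_{\Levi(\lambda_1)} < \infty$ (after rescaling $\lambda_1$ to make this quantity $<1$) earns its keep. As written your sketch asserts the conclusion of this bookkeeping rather than carrying it out. There is also a small notational slip in the dual inequality: on the left you want $\Levi(\lambda_1)(\beta,\bar\beta)$, i.e.\ the metric applied to the underlying vectors, not $\norm{\beta}^2_{\Levi(\lambda_1)}$, which in the paper's convention is the dual (inverse) norm; your later reference to $\Levi(\lambda_1)(\beta,\beta)$ suggests you know this, but the two appearances should be made consistent.
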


A proof of Theorem~\ref{thm:existence-gen} can be found in~\cite[Theorem 4.5 and Section 4.6]{MV2015}. A special case was established earlier in~\cite[Proposition 3.3]{M2001} with essentially the same argument.

\subsection{Separated sets in Riemannian manifolds} Recall that a set of points $A$ in a metric space $(X,\dist_X)$ is called \emph{$r$-separated} if $\dist_X(x_1,x_2) \geq r$ for all distinct $x_1,x_2 \in A$. We will frequently use the following observation about separated sets in Riemannian manifolds satisfying a type of bounded geometry condition. 

\begin{proposition}\label{prop:separated_sets} Suppose $(X,g)$ is a complete Riemannian manifold with bounded sectional curvature and positive injectivity radius. Let $\dist_g$ denote the distance induced by $g$ and let $\Bb_g(x,r)$ denote the open metric  ball of radius $r$ centered at $x \in X$. For any $r, R > 0$ there exists $L=L(r, R) > 0$ such that: if $A$ is a $r$-separated set in $(X,\dist_g)$, then 
\begin{align*}
\#( A \cap \Bb_g(x,R)) \leq L
\end{align*}
for any $x \in X$. 
\end{proposition}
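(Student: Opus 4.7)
The plan is a standard volume packing argument: small metric balls around points of $A$ are disjoint, all sit inside a slightly larger ball centered at $x$, and under the geometric hypotheses one has uniform lower bounds on the volumes of small balls and uniform upper bounds on the volume of any fixed-radius ball.

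First I would fix the packing radius $\rho = \min(r/2, i_0)$, where $i_0 > 0$ is (half) the injectivity radius. Since the sectional curvature is bounded in absolute value by some $K_0 > 0$, the Rauch comparison theorem applied inside a normal coordinate chart of radius $\rho$ gives a uniform lower bound $v_- = v_-(r, K_0, i_0) > 0$ on $\Vol_g \bigl( \Bb_g(y, \rho) \bigr)$ for every $y \in X$; concretely, one compares the volume form of $g$ with that of the model space of constant curvature $K_0$ via the exponential map, which is a diffeomorphism on the ball of radius $\rho$ in $T_y X$.

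Second, using the Ricci curvature lower bound implied by $|K_{\sec}| \leq K_0$, the Bishop--Gromov volume comparison theorem gives a uniform upper bound $v_+ = v_+(R, r, K_0) > 0$ on $\Vol_g\bigl(\Bb_g(x, R + \rho)\bigr)$ for every $x \in X$.

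Third, I would combine these with the hypothesis that $A$ is $r$-separated. Since $2\rho \leq r$, the balls $\{\Bb_g(a, \rho) : a \in A\}$ are pairwise disjoint; moreover, if $a \in A \cap \Bb_g(x, R)$ then $\Bb_g(a, \rho) \subset \Bb_g(x, R + \rho)$ by the triangle inequality. Hence
\begin{align*}
\#\bigl( A \cap \Bb_g(x, R) \bigr) \cdot v_-
\;\leq\; \sum_{a \in A \cap \Bb_g(x,R)} \Vol_g\bigl( \Bb_g(a, \rho) \bigr)
\;\leq\; \Vol_g\bigl( \Bb_g(x, R+\rho) \bigr)
\;\leq\; v_+,
\end{align*}
so the proposition holds with $L = v_+ / v_-$. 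The only nontrivial step is justifying the two volume bounds, and those are immediate consequences of Rauch comparison (for the lower bound, needing the upper sectional curvature bound and the injectivity radius) and Bishop--Gromov (for the upper bound, needing only the Ricci lower bound); no serious obstacle is expected.
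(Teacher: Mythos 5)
Your proof is correct and follows the same volume-packing scheme as the paper: disjoint balls of radius $\rho \leq r/2$ around the points of $A \cap \Bb_g(x,R)$, all contained in $\Bb_g(x,R+\rho)$, with Bishop--Gromov supplying the uniform upper bound on the big ball. The only place you diverge is the uniform lower bound on $\Vol_g(\Bb_g(y,\rho))$: the paper invokes Croke's inequality (Proposition 14 of Croke's 1980 paper), which needs only the positive injectivity radius and the dimension, whereas you use the Rauch/G\"unther comparison with the constant-curvature-$K_0$ model, which additionally uses the upper sectional curvature bound. Both routes are legitimate under the stated hypotheses; Croke's bound is the more economical citation since it makes no use of curvature, while yours is self-contained modulo standard comparison geometry. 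One small point to tidy in your version: for the Jacobi-field comparison with the model of constant curvature $K_0>0$ to give a positive lower bound, you should also cap $\rho$ below the conjugate radius of the model, e.g.\ take $\rho = \min(r/2,\, i_0,\, \pi/(2\sqrt{K_0}))$; this does not change the structure of the argument.
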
  

\begin{proof} Let $V_g$ denote the volume induced by $g$. By the Bishop-Gromov volume comparison theorem, there exists $C_1 > 0$ such that $V_g( \Bb_g(x,R+r)) \leq C_1$ for all $x \in X$. Since the injectivity radius is positive, by~\cite[Proposition 14]{Croke1980} there exists $C_2 > 0$ such that $V_g( \Bb_g(x,r/2)) \geq C_2$ for all $x \in X$. 

Fix $x \in X$ and suppose that $x_1,\dots, x_m$ are distinct points in $A \cap \Bb_g(x,R)$. Then the sets $\Bb_g(x_1,r/2), \dots, \Bb_g(x_m, r/2)$ are disjoint subsets of $\Bb_g(x,R+r)$ and so 
\begin{align*}
C_1 \geq V_g( \Bb_g(x,R+r)) \geq \sum_j V_g( \Bb_g(x_j,r/2)) \geq m C_2. 
\end{align*} 
Hence 
\begin{align*}
\# (A \cap \Bb_g(x,R))\leq \frac{C_1}{C_2}
\end{align*}
and the proof is complete. 

\end{proof}

\section{Domains with bounded intrinsic geometry}\label{sec:DBIG}

 In this section we recall the definition of domains with bounded intrinsic geometry and some results from~\cite{Z2021}. Then we prove some new results.
 
\begin{definition}\label{defn:BBG}\cite[Definition 1.1]{Z2021} A domain $\Omega \subset \Cb^d$ has \emph{bounded intrinsic geometry} if there exists a complete K\"ahler metric $g$ on $\Omega$ such that 
\begin{enumerate}[label=(b.\arabic*)]
\item\label{item:bd_sec} the metric $g$ has bounded sectional curvature and positive injectivity radius,  
\item\label{item:SBG} there exists a $\Cc^2$ function $\lambda : \Omega \rightarrow \Rb$ such that  the Levi form of $\lambda$ is uniformly bi-Lipschitz to  $g$ and $\norm{\partial \lambda}_{g}$ is bounded on $\Omega$. 
\end{enumerate}
\end{definition}

The K\"ahler metric in Definition~\ref{defn:BBG} does not have to be one of the standard invariant K\"ahler metrics, but in~\cite{Z2021} we proved that once there is some K\"ahler metric satisfying the definition, then the Bergman metric also satisfies the definition. 

 \begin{theorem}\label{thm:bergman_good}\cite[Theorem 1.2]{Z2021} If $\Omega \subset \Cb^d$ is a domain with bounded instrinsic geometry, then the Bergman metric $g_\Omega$ on $\Omega$ satisfies Definition~\ref{defn:BBG}. In particular, $\Omega$ is pseudoconvex. \end{theorem}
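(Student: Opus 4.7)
The plan is to transfer properties of the K\"ahler metric $g$ from Definition~\ref{defn:BBG} to the Bergman metric $g_\Omega$ via uniform $L^2$-estimates on the Bergman kernel obtained from Theorem~\ref{thm:existence-gen}, exploiting that the potential $\lambda$ has self bounded gradient.

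First I would show $\Omega$ is pseudoconvex and that $\log \Bf_\Omega(z,z) = 2\lambda(z) + O(1)$. For the lower bound, fix $w \in \Omega$, pick a cutoff $\chi$ equal to $1$ on a small $g$-ball at $w$, and apply Theorem~\ref{thm:existence-gen} with $\lambda_1 = \lambda$ and $\lambda_2 = 2\lambda(z) + 2d\log\abs{z-w}^2$ to $\alpha = \bar\partial\chi$; finiteness of the right-hand side forces the H\"ormander solution $u$ to vanish at $w$, so $f := \chi - u$ is an $L^2$ holomorphic function with $f(w) = 1$ and $\norm{f}_{L^2}^2 \lesssim e^{-2\lambda(w)}$, giving $\Bf_\Omega(w,w) \gtrsim e^{2\lambda(w)}$. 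A matching upper bound comes from the sub-mean-value inequality applied over a fixed-radius $g$-ball, whose Lebesgue volume is uniformly controlled by the bounded geometry of $g$ together with the comparison $\Levi(\lambda) \asymp g$. Combined with completeness of $g$, this growth produces a plurisubharmonic exhaustion, so $\Omega$ is pseudoconvex.

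The main obstacle is upgrading this pointwise equivalence to uniform $\Cc^k$-closeness in $g$-normal coordinate charts, since bi-Lipschitz equivalence of metrics does not transport sectional curvature bounds or injectivity radius. The strategy is to run the same H\"ormander construction for mixed derivatives of $\Bf_\Omega(z,w)$ along the diagonal, producing uniform estimates on $\partial^\alpha_w \partial^{\bar\beta}_w \Bf_\Omega(w,w)$ whose constants depend only on the data of Definition~\ref{defn:BBG}. This yields not only $g_\Omega \asymp g$ pointwise but also uniform $\Cc^k$-control of $g_\Omega$ relative to $g$ in bounded geometry charts of $g$ for any fixed $k$.

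Given this uniform local comparison the conclusion follows. For condition \ref{item:bd_sec} applied to $g_\Omega$: bounded sectional curvature is immediate from uniform $\Cc^2$-control of the metric tensor, while the positive injectivity radius follows from bounded curvature together with a uniform lower volume bound for $g_\Omega$-balls of fixed radius, which is itself a consequence of the bi-Lipschitz comparison with $g$ and the volume non-collapsing of $g$. For condition \ref{item:SBG} applied to $g_\Omega$: the same $\lambda$ works, since $\Levi(\lambda) \asymp g \asymp g_\Omega$ gives the bi-Lipschitz requirement, and $g_\Omega \gtrsim g$ implies $\norm{\partial \lambda}_{g_\Omega} \lesssim \norm{\partial \lambda}_{g}$, which is bounded by hypothesis.
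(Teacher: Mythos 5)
Note first that the paper does not prove this statement: it cites it as \cite[Theorem 1.2]{Z2021}, so I am comparing your argument against the method used in that reference (which the present paper leans on in Section~\ref{sec:DBIG}).

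Your overall plan is reasonable and your final step is fine: once one knows $g_\Omega \asymp g$ with uniform $\Cc^k$-control in good charts, condition~\ref{item:SBG} for $g_\Omega$ does indeed hold with the same $\lambda$, and bounded curvature plus Cheeger--Gromov--Taylor-style volume non-collapsing gives the injectivity radius bound. The problem is the step you yourself flag as the ``main obstacle.'' You propose to obtain uniform $\Cc^k$-control of $\log \Bf_\Omega$ by running H\"ormander for the jets of $\Bf_\Omega$ in ``$g$-normal coordinate charts'' or ``bounded geometry charts of $g$.'' But condition~\ref{item:bd_sec} only gives $g$ bounded sectional curvature and positive injectivity radius, and $\lambda$ is only $\Cc^2$. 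Under these hypotheses the best coordinate charts available a priori (harmonic coordinates, say) give merely $\Cc^{1,\alpha}$ or $W^{2,p}$ control of the metric tensor of $g$ --- nowhere near the $\Cc^4$-control of the chart you need to make sense of, let alone estimate, four derivatives of $\log\Bf_\Omega$ (which is what bounded sectional curvature of $g_\Omega$ requires). Moreover, normal coordinates are real, not holomorphic, so they are the wrong charts in which to differentiate the Bergman kernel in the first place. In short, the ``uniform $\Cc^k$-control'' you invoke does not exist from the stated hypotheses, and your argument supplies no mechanism to produce it.

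The missing ingredient is precisely what the paper itself points to when deriving Theorem~\ref{thm:good_charts}: Shi's short-time Ricci flow smoothing \cite{Shi1997}, which replaces the given $g$ by a uniformly bi-Lipschitz complete K\"ahler metric with all covariant derivatives of curvature bounded, combined with Wu--Yau's construction \cite{WY2020} of \emph{holomorphic} coordinate embeddings $\Bb \hookrightarrow \Omega$ in which the pulled-back metric is $\Cc^\infty$-uniformly controlled. Only after this smoothing step does your H\"ormander-for-jets strategy become well-posed and uniform, and only with genuinely holomorphic charts can the pointwise kernel estimate be upgraded to estimates on $g_\Omega$ and its curvature. (Your first step --- the pointwise two-sided bound $\log\Bf_\Omega = 2\lambda + O(1)$ via Theorem~\ref{thm:existence-gen} --- is on the right track, modulo some care with the choice of weight $\lambda_2$ so that the resulting $u$ is genuinely $L^2(\mu)$-small rather than small only in a weighted norm, and modulo a justification of the pseudoconvexity of $\Omega$, which does not follow from completeness alone without further argument.)
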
 

We will also use the following theorem from~\cite{Z2021}.

 \begin{theorem}\label{thm:kob_vs_bergman}\cite[Theorem 1.8]{Z2021} If $\Omega \subset \Cb^d$ is a domain with bounded instrinsic geometry, then the Bergman metric and the Kobayashi metric are bi-Lipschitz equivalent. \end{theorem}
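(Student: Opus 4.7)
The plan is to establish the two inequalities $g_\Omega \lesssim g_\Omega^{Kob}$ and $g_\Omega^{Kob} \lesssim g_\Omega$ separately, where $g_\Omega^{Kob}$ denotes the infinitesimal Kobayashi pseudometric. By Theorem~\ref{thm:bergman_good} we may take the K\"ahler metric $g$ in Definition~\ref{defn:BBG} to be the Bergman metric $g_\Omega$ itself; in particular, there is a $\Cc^2$ function $\lambda : \Omega \to \Rb$ whose Levi form is uniformly bi-Lipschitz to $g_\Omega$ and which has self bounded gradient with respect to $g_\Omega$. Theorem~\ref{thm:existence-gen} then supplies uniform weighted $\bar\partial$-estimates, which will be the main analytic input.

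For $g_\Omega^{Kob} \lesssim g_\Omega$, I would construct, for each $z_0 \in \Omega$ and each direction $X$ with $g_\Omega(X,X) = 1$, a holomorphic disk $F : \Db \to \Omega$ with $F(0) = z_0$ and $|F'(0)| \gtrsim 1$. The bounded sectional curvature and positive injectivity radius of $g_\Omega$ produce a Bergman ball $\Bb_\Omega(z_0, r_0)$ of uniform radius on which $g_\Omega$ is uniformly quasi-isometric, in suitable local holomorphic coordinates, to a fixed model K\"ahler metric. Transporting a Euclidean disk through these coordinates gives an \emph{approximately} holomorphic map into $\Omega$ with the right derivative at $z_0$; I would then kill the $\bar\partial$-error by solving $\bar{\partial} u = \bar{\partial}(\text{ansatz})$ with weight $\lambda$ via Theorem~\ref{thm:existence-gen}, obtaining a genuine holomorphic map. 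A pointwise Bergman kernel estimate on the correction, combined with a mild rescaling of $\Db$, ensures the corrected map stays inside $\Omega$ and retains the derivative lower bound.

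For the reverse inequality $g_\Omega \lesssim g_\Omega^{Kob}$, I would use the extremal description of the Bergman metric: up to a factor of $\Bf_\Omega(z_0,z_0)$, the quantity $g_\Omega(z_0; X)^2$ is the supremum of $|df_{z_0}(X)|^2$ over $f \in A^2(\Omega)$ with $f(z_0) = 0$ and $\|f\|_{L^2} \leq 1$. For any such $f$ and any holomorphic disk $\psi : \Db \to \Omega$ nearly realizing $g_\Omega^{Kob}(z_0; X)$, pulling $f$ back to $\Db$, applying Cauchy's derivative estimate at the origin, and then dominating $\sup_{\psi(\Db)} |f|^2$ by a sub-mean-value of $|f|^2$ over a Bergman ball of fixed radius (which is legitimate because $|f|^2$ is plurisubharmonic and $g_\Omega$ has bounded geometry) together with the pointwise Bergman kernel bound $|f(w)|^2 \leq \Bf_\Omega(w,w)\|f\|_{L^2}^2$, yields the required estimate.

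The main obstacle is the first direction: turning a locally defined approximately holomorphic disk, coming from the model for $g_\Omega$, into a globally defined \emph{holomorphic} disk that actually lands in $\Omega$. The $L^2$-to-$L^\infty$ upgrade for the $\bar\partial$-correction and the verification that the corrected disk does not exit $\Omega$ are exactly where the self bounded gradient hypothesis on $\lambda$, and the uniform constants it buys in Theorem~\ref{thm:existence-gen}, are essential.
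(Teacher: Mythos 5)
This theorem is not proved in the present paper; it is quoted verbatim from \cite{Z2021}, so there is no in-paper argument to compare against. Evaluating your proposal on its own terms, both halves have problems.

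For the direction $g^{Kob}_\Omega \lesssim g_\Omega$, your plan to build an approximately holomorphic disk from bounded-geometry local coordinates and then kill the $\bar\partial$-error via Theorem~\ref{thm:existence-gen} does not parse as stated: Theorem~\ref{thm:existence-gen} is a weighted $\bar\partial$-estimate on $\Omega$ for $(0,1)$-forms on $\Omega$, whereas the object you want to correct is a map $\Db\to\Omega$, i.e.\ a $\Cb^d$-valued function on the disk, so the $\bar\partial$-equation would have to be solved on $\Db$, where the self-bounded-gradient weight $\lambda$ plays no role. More substantively, you are implicitly re-deriving the existence of genuinely holomorphic coordinates with uniform control -- but that is precisely the content of Theorem~\ref{thm:good_charts}, which rests on the deep Wu--Yau and Shi results and is already available. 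Once you have $\Phi_\zeta:\Bb\to\Omega$ with $\Phi_\zeta(0)=\zeta$ and $C_1^{-2}g_{\Euc}\le\Phi_\zeta^*g_\Omega\le C_1^2 g_{\Euc}$, the disk $s\mapsto\Phi_\zeta\bigl(sY/C_1\bigr)$ with $Y=(d\Phi_\zeta)_0^{-1}X$ is already holomorphic and lands in $\Omega$, and it immediately gives $g^{Kob}_\Omega(\zeta;X)\le C_1\sqrt{g_\Omega(X,X)}$. No $\bar\partial$-correction is needed, and trying to reconstruct it soft-handedly is where the real difficulty would lie.

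For the direction $g_\Omega \lesssim g^{Kob}_\Omega$, the argument you sketch is circular. You want to bound $|df_{z_0}(\psi'(0))|$ by dominating $\sup_{\psi(r\Db)}|f|$ using the pointwise bound $|f(w)|^2\le\Bf_\Omega(w,w)\|f\|_{L^2}^2$ together with a sub-mean-value estimate ``over a Bergman ball of fixed radius.'' But to ensure that $\psi(r\Db)$ sits inside a Bergman ball of fixed radius around $z_0$ -- equivalently, that $\Bf_\Omega(w,w)$ is comparable to $\Bf_\Omega(z_0,z_0)$ for $w\in\psi(r\Db)$ -- you would need a priori that the Kobayashi-holomorphic disk $\psi$ does not travel far in the Bergman metric, which is exactly the inequality $g_\Omega\lesssim g^{Kob}_\Omega$ you are trying to prove. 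The direction $g^{Kob}_\Omega\lesssim g_\Omega$ does not help here, since it gives containment of Bergman balls in Kobayashi balls, not the reverse. A working proof of this inequality needs a genuinely different input, for instance a Sibony-type lower bound for the Kobayashi metric obtained from a bounded plurisubharmonic function with complex Hessian comparable to $g_\Omega$, which is where the self-bounded-gradient structure of $\lambda$ would actually enter (e.g.\ via $-e^{-\epsilon\lambda}$ for small $\epsilon$); your sketch does not contain such an ingredient.
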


 \subsection{Solving the  $\bar{\partial}$-equation}
 
 As a corollary to Theorems~\ref{thm:bergman_good} and~\ref{thm:existence-gen} we have the following existence theorem for solutions to the $\bar{\partial}$-equation on domains with bounded intrinsic geometry. 
 
 \begin{corollary}\label{cor:existence_big-domains} Suppose $\Omega \subset \Cb^d$ is a bounded domain with bounded intrinsic geometry. Then there exists $C > 0$ such that: if $\lambda_2: \Omega \rightarrow \{-\infty\}\cup \Rb$ is plurisubharmonic and $\alpha \in L^{2,{\rm loc}}_{(0,1)}(\Omega)$ with $\bar{\partial}\alpha= 0$, then there is some $u \in L^{2,{\rm loc}}(\Omega)$ with $\bar{\partial}u = \alpha$ and 
\begin{align*}
\int_\Omega \abs{u}^2 e^{-\lambda_2} d\mu \leq C\int_\Omega \norm{\alpha}_{g_\Omega}^2 e^{-\lambda_2} d\mu
\end{align*}
assuming the right hand side is finite. 
\end{corollary}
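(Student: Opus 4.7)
The plan is to derive this corollary as a direct consequence of Theorem~\ref{thm:existence-gen} applied with a weight function produced by Theorem~\ref{thm:bergman_good}. First I would invoke Theorem~\ref{thm:bergman_good}: since $\Omega$ has bounded intrinsic geometry, the Bergman metric $g_\Omega$ itself satisfies Definition~\ref{defn:BBG}. In particular $\Omega$ is pseudoconvex, so Theorem~\ref{thm:existence-gen} is applicable, and there exists a $\Cc^2$ function $\lambda:\Omega\to \Rb$ together with a constant $K\geq 1$ such that
\begin{align*}
\frac{1}{K}\, g_\Omega \leq \Levi(\lambda) \leq K\, g_\Omega \quad \text{on } \Omega, \qquad \sup_{z\in\Omega}\norm{\partial\lambda}_{g_\Omega} <\infty.
\end{align*}

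The next step is to check that $\lambda$ has self bounded gradient in the sense of Section~\ref{sec:defn_SBG}. Since the norm on $1$-forms is a dual norm, the inequality $\Levi(\lambda)\geq \frac{1}{K}g_\Omega$ gives $\norm{\partial\lambda}_{\Levi(\lambda)}\leq \sqrt{K}\,\norm{\partial\lambda}_{g_\Omega}$, which is uniformly bounded. Thus Theorem~\ref{thm:existence-gen} applies with $\lambda_1=\lambda$, yielding a constant $C_0>0$, depending only on the sup above, so that for any plurisubharmonic $\lambda_2$ and any $\bar{\partial}$-closed $\alpha\in L^{2,{\rm loc}}_{(0,1)}(\Omega)$ one can solve $\bar{\partial}u=\alpha$ with
\begin{align*}
\int_\Omega \abs{u}^2 e^{-\lambda_2}\,d\mu \leq C_0 \int_\Omega \norm{\alpha}^2_{\Levi(\lambda)} e^{-\lambda_2}\,d\mu.
\end{align*}

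Finally, I would convert the right-hand side from the $\Levi(\lambda)$-norm to the $g_\Omega$-norm. The inequality $\Levi(\lambda)\geq \frac{1}{K}g_\Omega$, again dualized at the level of $1$-forms, gives the pointwise bound $\norm{\alpha}^2_{\Levi(\lambda)}\leq K\, \norm{\alpha}^2_{g_\Omega}$. Setting $C=K C_0$ then yields the stated estimate. There is no real obstacle here: the content of the corollary is entirely absorbed into Theorems~\ref{thm:bergman_good} and~\ref{thm:existence-gen}, and the only care needed is to track the direction of the bi-Lipschitz bound when passing from the metric $\Levi(\lambda)$ to its induced dual norm on $(0,1)$-forms.
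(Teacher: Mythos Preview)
Your proposal is correct and follows exactly the paper's approach: take the potential $\lambda$ furnished by Theorem~\ref{thm:bergman_good} and apply Theorem~\ref{thm:existence-gen} with $\lambda_1=\lambda$. The paper's proof is a two-line sketch, and you have simply filled in the details it omits (checking self bounded gradient and converting the $\Levi(\lambda)$-norm to the $g_\Omega$-norm via the bi-Lipschitz bound).
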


\begin{proof} Let $\lambda$ be a $\Cc^2$ function satisfying Definition~\ref{defn:BBG} for the Bergman metric. Then apply Theorem~\ref{thm:existence-gen} with $\lambda_1 = \lambda$.
\end{proof}

\subsection{Discretization} As a corollary to Theorem~\ref{thm:bergman_good} and Proposition~\ref{prop:separated_sets} we have the following useful discretization of a domain with bounded intrinsic geometry. 
 
 \begin{corollary}\label{cor:discretization} Suppose $\Omega \subset \Cb^d$ is a bounded domain with bounded intrinsic geometry. For any $r > 0$ there exists a sequence of distinct points $(\zeta_m)_{m \geq 1}$ in $\Omega$ with the following properties: 
 \begin{enumerate}
 \item $\{\zeta_m : m \geq 1\}$ is $r$-separated in $(\Omega, \dist_\Omega)$, 
 \item $\Omega = \cup_m \Bb_\Omega(\zeta_m, r)$, and
 \item for any $R > 0$,  $\sup_{z \in \Omega} \#\{ m : \zeta_m \in \Bb_\Omega(z, R) \} <+\infty$.
 \end{enumerate}
 \end{corollary}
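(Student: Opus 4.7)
The plan is to build the sequence by a standard greedy / Zorn's lemma argument producing a maximal $r$-separated set, and then invoke Proposition~\ref{prop:separated_sets} to get the uniform local finiteness (3). Property (2) will drop out automatically from maximality, and (1) is built into the construction. The role of Theorem~\ref{thm:bergman_good} is to verify the hypotheses of Proposition~\ref{prop:separated_sets} for the Bergman metric $g_\Omega$: bounded sectional curvature, positive injectivity radius, and completeness.

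More concretely, I would first apply Zorn's lemma to the collection of all $r$-separated subsets of $(\Omega,\dist_\Omega)$, ordered by inclusion, to obtain a maximal $r$-separated set $A \subset \Omega$. By Theorem~\ref{thm:bergman_good}, the Bergman metric $g_\Omega$ satisfies Definition~\ref{defn:BBG}, so in particular $(\Omega,g_\Omega)$ is a complete K\"ahler manifold with bounded sectional curvature and positive injectivity radius. Proposition~\ref{prop:separated_sets} then applies and yields, for every $R>0$, a constant $L=L(r,R)$ such that $\#(A\cap \Bb_\Omega(z,R))\leq L$ for all $z\in\Omega$; this is exactly property (3).

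Next I would argue that $A$ is countable and enumerate it. Fix any $z_0\in\Omega$. Since $g_\Omega$ is complete, Hopf--Rinow gives $\Omega=\bigcup_{n\geq 1}\Bb_\Omega(z_0,n)$, and by the finiteness estimate just established $A\cap \Bb_\Omega(z_0,n)$ is finite for each $n$. Hence $A$ is countable, and I write $A=\{\zeta_m:m\geq 1\}$ with the $\zeta_m$ distinct. Property (1) is built into the definition of $A$. For property (2), if some $z\in\Omega$ satisfied $\dist_\Omega(z,\zeta_m)\geq r$ for every $m$, then $A\cup\{z\}$ would still be $r$-separated, contradicting maximality of $A$; therefore $z\in \Bb_\Omega(\zeta_m,r)$ for some $m$, so $\Omega=\bigcup_m \Bb_\Omega(\zeta_m,r)$.

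This is essentially a routine construction, so I do not expect a real obstacle. The only point that needs a moment of care is making sure that the input hypotheses of Proposition~\ref{prop:separated_sets}, in particular the positive injectivity radius and bounded sectional curvature of the \emph{Bergman} metric, are available; this is precisely the content of Theorem~\ref{thm:bergman_good}, applied to the K\"ahler metric $g_\Omega$. No further technicalities appear to be involved.
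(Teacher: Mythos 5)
Your proposal is correct and follows essentially the same path as the paper's own proof: take a maximal $r$-separated set via Zorn's lemma, invoke Theorem~\ref{thm:bergman_good} to verify the hypotheses of Proposition~\ref{prop:separated_sets}, deduce (3) and countability from that proposition, and get (1) from the construction and (2) from maximality. Your remark about the Hopf--Rinow exhaustion to justify countability is a slightly more explicit version of what the paper leaves implicit, but the argument is the same.
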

 
 \begin{proof} Let $A \subset \Omega$ be a maximal $r$-separated set in $\Omega$ (which exists by Zorn's lemma). Since the Bergman metric is complete, the metric space $(\Omega, \dist_\Omega)$ is unbounded (by the Hopf-Rinow theorem). So the set $A$ must be infinite. By Proposition~\ref{prop:separated_sets}, the set $A$ is countable. Hence $A = \{\zeta_m : m \geq 1\}$ for some sequence $(\zeta_m)_{m \geq 1}$ of distinct points. 
 
Then part (1) follows from the definition of $A$. Part (2) follows from the maximality of $A$: if there exists $w \in \Omega \setminus \cup_m \Bb_\Omega(\zeta_m, r)$, then $A \cup \{w\}$ would also be $r$-separated. Part (3) follows  from Proposition~\ref{prop:separated_sets}. 
 
 \end{proof}

\subsection{Estimates on the Bergman kernel and metric}\label{sec:estimates} In this subsection we recall two results from~\cite{Z2021} and then use them to derive a number of new estimates for the Bergman kernel and metric. 

Suppose, for the rest of this subsection, that $\Omega \subset \Cb^d$ is a bounded domain with bounded intrinsic geometry.  

Combining Theorem~\ref{thm:bergman_good} with deep results of Wu-Yau~\cite{WY2020} and Shi~\cite{Shi1997}, yields the following. 

\begin{theorem}\label{thm:good_charts}\cite[Theorem 5.1, Theorem 10.1]{Z2021} There exists $C_1 > 1$ such that: for every $\zeta \in \Omega$ there is a holomorphic embedding $\Phi_\zeta : \Bb \rightarrow \Omega$ with $\Phi_\zeta(0) = \zeta$,
\begin{align*}
\frac{1}{C_1^2} g_{\Euc} \leq \Phi_\zeta^* g_\Omega \leq C_1^2 g_{\Euc} 
\end{align*}
on $\Bb$, and 
\begin{align}
\label{eqn:dist_comp}
\frac{1}{C_1}\norm{w_1-w_2}_{2} \leq \dist_\Omega\left(\Phi_\zeta(w_1), \Phi_\zeta(w_2)\right) \leq C_1 \norm{w_1-w_2}_{2}
\end{align}
for all $w_1,w_2 \in \Bb$.
\end{theorem}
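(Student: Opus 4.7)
The plan is to extract the chart $\Phi_\zeta$ from the bounded-geometry properties of $g_\Omega$ together with deep uniformization-type results for complete K\"ahler manifolds with bounded curvature.

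First, I would invoke Theorem~\ref{thm:bergman_good} to conclude that $(\Omega, g_\Omega)$ is a complete K\"ahler manifold with bounded sectional curvature and positive injectivity radius. The key analytic input is then Shi's short-time existence theorem for the K\"ahler--Ricci flow starting from such a metric, which (after running for a short, uniform amount of time) upgrades the bounded sectional curvature hypothesis to uniform bounds on all higher covariant derivatives of the curvature tensor. Combined with the positive injectivity radius, this gives $(\Omega, g_\Omega)$ bounded geometry of all orders in a uniform, basepoint-independent way. Using Wu--Yau's construction of holomorphic quasi-coordinates (or equivalently Hermitian normal coordinates obtained by solving a $\bar\partial$-equation with uniform estimates in a Euclidean normal chart), one then obtains, for every $\zeta \in \Omega$, a holomorphic map from some Euclidean ball $\Bb(0, r_0) \subset \Cb^d$ into $\Omega$ sending $0$ to $\zeta$ and with pulled-back metric uniformly bi-Lipschitz to $g_{\Euc}$. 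Rescaling the domain by $r_0$ produces $\Phi_\zeta : \Bb \to \Omega$ with the required metric comparison, at the cost of enlarging the constant.

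Injectivity of $\Phi_\zeta$ should be arranged as part of the construction: since the map is, infinitesimally, close to an isometry, a standard argument using the positive injectivity radius (together with shrinking $r_0$ if necessary) shows that $\Phi_\zeta$ is an embedding.

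Finally, I would derive the distance estimate \eqref{eqn:dist_comp} from the pointwise metric comparison. The upper bound is immediate: the straight line from $w_1$ to $w_2$ in $\Bb$ has Euclidean length $\norm{w_1 - w_2}_2$, so its image under $\Phi_\zeta$ has $g_\Omega$-length at most $C_1 \norm{w_1 - w_2}_2$. For the lower bound, I would argue by a dichotomy. If $\norm{w_1 - w_2}_2$ is smaller than a uniform threshold depending on $1/C_1$ and the injectivity radius, then any curve in $\Omega$ joining $\Phi_\zeta(w_1)$ to $\Phi_\zeta(w_2)$ that realizes a length close to the Bergman distance must remain inside $\Phi_\zeta(\Bb)$ (otherwise it would traverse a Bergman-distance of at least $\frac{1}{C_1}(1 - \max_i \norm{w_i}_2)$ to and from $\Phi_\zeta(\partial\Bb)$, contradicting minimality). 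On such a curve the lower Lipschitz bound on $\Phi_\zeta^* g_\Omega$ gives $\dist_\Omega(\Phi_\zeta(w_1), \Phi_\zeta(w_2)) \geq \frac{1}{C_1}\norm{w_1-w_2}_2$. When $\norm{w_1 - w_2}_2$ is of order one, one obtains the same inequality with a worse constant by chaining together finitely many such short sub-arcs, and absorbing the loss into the final choice of $C_1$.

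The hard part is the third step: producing the holomorphic embedding of a Euclidean ball of uniform size with uniformly bi-Lipschitz pulled-back metric. This is where the deep machinery of Wu--Yau and Shi genuinely enters, since merely having bounded sectional curvature and positive injectivity radius does not by itself yield a \emph{holomorphic} chart with these estimates---one needs the improved regularity supplied by running the Ricci flow and the careful construction of holomorphic quasi-coordinates.
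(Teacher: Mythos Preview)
The paper does not prove this theorem; it simply cites \cite[Theorem 5.1, Theorem 10.1]{Z2021} and remarks that the result follows by combining Theorem~\ref{thm:bergman_good} with the deep results of Wu--Yau and Shi. Your sketch correctly identifies exactly these ingredients---Shi's K\"ahler--Ricci flow to upgrade bounded curvature to bounds on all covariant derivatives, and the Wu--Yau construction of holomorphic quasi-coordinates---so at the level of strategy you are aligned with the paper.

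One genuine gap: your derivation of the lower bound in \eqref{eqn:dist_comp} does not go through as written. Your dichotomy threshold $\frac{1}{C_1}(1-\max_i\norm{w_i}_2)$ is not uniform; it degenerates as $w_1,w_2$ approach $\partial\Bb$, and chaining short sub-arcs along the segment $[w_1,w_2]$ does not help, since every intermediate point may also lie near $\partial\Bb$. The standard fix (and the one implicit in the cited construction) is to first build the holomorphic chart with the metric comparison on a \emph{larger} ball---say $\tilde\Phi_\zeta:2\Bb\to\Omega$---and then set $\Phi_\zeta:=\tilde\Phi_\zeta|_{\Bb}$. With this buffer, any curve in $\Omega$ from $\Phi_\zeta(w_1)$ to $\Phi_\zeta(w_2)$ of $g_\Omega$-length below $1/C_1$ must remain in $\tilde\Phi_\zeta(2\Bb)$, and pulling back gives the lower bound directly; the remaining case $\norm{w_1-w_2}_2\gtrsim 1$ then follows trivially since $\dist_\Omega$ is bounded below by a positive constant on such pairs. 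You should insert this ``enlarge then restrict'' step before passing to the distance estimate.
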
 

\begin{remark} Notice that Equation~\eqref{eqn:dist_comp} implies that 
\begin{align*}
\Bb_\Omega\left(\zeta, \frac{r}{C_1}\right) \subset \Phi_\zeta(r\Bb) \subset \Bb_\Omega\left(\zeta, C_1 r\right)
\end{align*}
when $r < \frac{1}{C_1}$. 
\end{remark}

Using the embeddings in Theorem~\ref{thm:good_charts}, we define
\begin{align*}
\beta_\zeta &: \Bb \times \Bb \rightarrow \Cb \\
\beta_\zeta&(u,w) = \Bf_\Omega(\Phi_\zeta(u), \Phi_\zeta(w)) \det \Phi_\zeta^\prime(u)\overline{\det \Phi_\zeta^\prime(w)}.
\end{align*}
These functions have the following uniform estimates. 

\begin{theorem}\label{thm:Bergman_kernel_est}\cite[Theorem 9.1, Theorem 10.1]{Z2021} \ 
\begin{enumerate}
\item There exists $C_2 > 1$ such that: 
\begin{align*}
C_2^{-1} \leq \beta_\zeta(w,w) \leq C_2 
\end{align*}
for all $\zeta \in \Omega$ and $w \in \Bb$. 
\item For every $\delta \in (0,1)$ and multi-indices $a,b$ there exists $C_{\delta, a,b} > 0$ such that 
\begin{align*}
\abs{\frac{\partial^{\abs{a}+\abs{b}} \beta_\zeta(u,w)}{\partial u^{a} \partial \bar{w}^b}} \leq C_{\delta, a,b}
\end{align*}
for all $\zeta \in \Omega$ and $u,w \in \delta \Bb$. 
\end{enumerate}
\end{theorem}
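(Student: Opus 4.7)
The plan is to deduce both parts from the extremal characterization
\[
\Bf_\Omega(z,z) = \sup\bigl\{ |f(z)|^2 : f \in A^2(\Omega),\ \|f\|_{L^2(\Omega)} \leq 1 \bigr\}
\]
together with pullback via $\Phi_\zeta$. For $f \in A^2(\Omega)$, set $\wt{f}(w) := f(\Phi_\zeta(w)) \det \Phi_\zeta'(w)$; then $\wt{f}$ is holomorphic on $\Bb$ and, by the change of variables formula (using injectivity of $\Phi_\zeta$), $\|\wt{f}\|_{L^2(\Bb)}^2 \leq \|f\|^2_{L^2(\Omega)}$. Consequently, $\beta_\zeta(w,w)$ equals the supremum of $|\wt{f}(w)|^2$ over pullbacks of $L^2(\Omega)$-unit-norm $f$'s.

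For the upper bound in (1), I would arrange for $\Phi_\zeta$ to extend holomorphically to a slightly larger ball $(1+\epsilon)\Bb$ (a harmless adjustment of Theorem~\ref{thm:good_charts}, obtained by applying it at a slightly larger target radius). Then the submean value inequality on a Euclidean ball of fixed radius $\epsilon/2$ around any $w \in \Bb$ yields $|\wt{f}(w)|^2 \leq C$ uniformly, giving $\beta_\zeta(w,w) \leq C_2$. For the lower bound I would construct a holomorphic peak function via H\"ormander's $L^2$-theory (Corollary~\ref{cor:existence_big-domains}): take a smooth cutoff $\chi$ compactly supported in $\Phi_\zeta(\Bb)$ with $\chi \equiv 1$ on a fixed-size Bergman ball around $\Phi_\zeta(w)$, then solve $\bar{\partial} u = \bar{\partial}\chi$ with weight $\lambda_2(z)$ equal to $2d \log |z - \Phi_\zeta(w)|$ near $\Phi_\zeta(w)$ (extended plurisubharmonically to $\Omega$), which forces $u(\Phi_\zeta(w)) = 0$. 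The function $f := \chi - u$ is then holomorphic on $\Omega$, equals $1$ at $\Phi_\zeta(w)$, and has uniformly controlled $L^2$-norm, yielding $\Bf_\Omega(\Phi_\zeta(w), \Phi_\zeta(w)) \gtrsim 1$. A matching lower bound on $|\det \Phi_\zeta'(w)|^2$ then follows from the metric comparison $\Phi_\zeta^* g_\Omega \asymp g_{\Euc}$ together with the upper bound on $\Bf_\Omega$ already established along $\Phi_\zeta(\Bb)$.

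Part (2) reduces to part (1) via Cauchy's integral formula. The reproducing-kernel Cauchy--Schwarz inequality $|\beta_\zeta(u,w)|^2 \leq \beta_\zeta(u,u) \beta_\zeta(w,w) \leq C_2^2$ promotes the diagonal bound of (1) to a uniform sup-norm bound on all of $\Bb \times \Bb$. Cauchy's formula applied over polydiscs of polyradius $(1-\delta)/2$ centered at $u,w \in \delta \Bb$ (which sit inside $\Bb \times \Bb$) then immediately gives the claimed bounds on $\partial^{|a|+|b|}\beta_\zeta/\partial u^a \partial \bar{w}^b$.

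The hardest step will be the peak function construction for the lower bound in (1): the $L^2$ estimate from Corollary~\ref{cor:existence_big-domains} must be applied uniformly in both $\zeta \in \Omega$ and $w \in \Bb$, and the cutoff $\chi$ and weight $\lambda_2$ must be chosen so that the right-hand side of the weighted $L^2$ bound is controlled independently of these parameters. What saves us is precisely that the constant in Corollary~\ref{cor:existence_big-domains} depends only on the bounded-geometry data of $\Omega$ (not on $\lambda_2$ or on the base point $\Phi_\zeta(w)$), so, combined with the uniform distance and metric comparisons in Theorem~\ref{thm:good_charts}, the entire construction runs with constants independent of $\zeta$ and $w$.
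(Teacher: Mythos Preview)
The paper does not prove this theorem; it is quoted from \cite[Theorems~9.1 and~10.1]{Z2021} and used as a black box. So there is no ``paper's own proof'' to compare against, and your proposal is really an attempt to reconstruct the argument from~\cite{Z2021} using only the tools visible in the present paper.

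Your treatment of part~(2) (Cauchy estimates from the diagonal bound plus the reproducing-kernel Cauchy--Schwarz inequality) is standard and fine, as is the upper bound in part~(1) modulo the slight enlargement of the chart domain that you flag.

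The lower bound in part~(1), however, has a genuine gap. Your plan is to build a peak function with $f(\Phi_\zeta(w))=1$ and uniformly bounded $L^2$-norm, conclude $\Bf_\Omega(\Phi_\zeta(w),\Phi_\zeta(w))\gtrsim 1$, and then separately obtain a uniform lower bound on $\abs{\det\Phi_\zeta'(w)}^2$. The last step cannot work: as $\zeta\to\partial\Omega$ one typically has $\Bf_\Omega(\zeta,\zeta)\to\infty$, so $\abs{\det\Phi_\zeta'(0)}^2$ (and more generally $\abs{\det\Phi_\zeta'(w)}^2$) must tend to zero and admits no uniform positive lower bound. Indeed, the ingredients you list (the metric comparison $\Phi_\zeta^*g_\Omega\asymp g_{\Euc}$ and the upper bound $\beta_\zeta\le C_2$) only yield $\Bf_\Omega\lesssim\det g_\Omega$ and hence an \emph{upper} bound on $\abs{\det\Phi_\zeta'}^2$, not a lower one.

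The fix is to avoid splitting the product. The peak function $f=\chi-u$ should be shown to satisfy $\norm{f}_{L^2(\Omega)}^2\lesssim\abs{\det\Phi_\zeta'(w)}^2$, so that $\beta_\zeta(w,w)\ge\abs{\det\Phi_\zeta'(w)}^2/\norm{f}^2\gtrsim 1$ directly. For this you compute both $\norm{\chi}^2_{L^2(\Omega)}$ and the right-hand side of the $\bar\partial$-estimate in chart coordinates; each picks up a factor $\abs{\det\Phi_\zeta'(w')}^2$ from the change of variables, and the crucial missing observation is that $\abs{\det\Phi_\zeta'(w')}^2\asymp\abs{\det\Phi_\zeta'(w)}^2$ uniformly for $w,w'\in\Bb$. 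That comparability follows from the bounded geometry of $g_\Omega$ (Theorem~\ref{thm:bergman_good}): since $\Phi_\zeta^*g_\Omega\asymp g_{\Euc}$, one has $\abs{\det\Phi_\zeta'(w')}^2\asymp 1/\det g_{\Omega}(\Phi_\zeta(w'))$, and bounded sectional curvature plus positive injectivity radius force the ratio of volume densities at two points at bounded $g_\Omega$-distance to be uniformly bounded.
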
 

Theorem~\ref{thm:Bergman_kernel_est} implies the following off-diagonal estimates near the diagonal. 

\begin{proposition}\label{prop:off_diagonal_estimate} There exists $r_0 > 0$ and $C_3 > 1$ such that: If $\zeta \in \Omega$ and $z \in \Phi_\zeta(r_0\Bb)$, then 
\begin{align*}
\frac{1}{C_3}\Bf_\Omega(z,z)\Bf_\Omega(\zeta,\zeta) \leq \abs{\Bf_\Omega(z,\zeta)}^2 \leq C_3 \Bf_\Omega(z,z)\Bf_\Omega(\zeta,\zeta).
\end{align*}
\end{proposition}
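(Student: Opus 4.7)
The plan is to reduce everything to bounds on the normalized kernels $\beta_\zeta$ supplied by Theorem~\ref{thm:Bergman_kernel_est}. Writing $z = \Phi_\zeta(w)$ for some $w \in r_0\Bb$, the transformation rule for the Bergman kernel under the biholomorphism $\Phi_\zeta$ (applied at the pairs $(w,w)$, $(0,0)$, and $(w,0)$) gives
\begin{align*}
\Bf_\Omega(z,z) = \frac{\beta_\zeta(w,w)}{\abs{\det\Phi_\zeta'(w)}^2},\quad \Bf_\Omega(\zeta,\zeta) = \frac{\beta_\zeta(0,0)}{\abs{\det\Phi_\zeta'(0)}^2}, \quad \Bf_\Omega(z,\zeta) = \frac{\beta_\zeta(w,0)}{\det\Phi_\zeta'(w)\overline{\det\Phi_\zeta'(0)}}.
\end{align*}
Taking squared absolute values and dividing, the Jacobian factors cancel exactly, leaving the scale-free identity
\begin{align*}
\frac{\abs{\Bf_\Omega(z,\zeta)}^2}{\Bf_\Omega(z,z)\Bf_\Omega(\zeta,\zeta)} = \frac{\abs{\beta_\zeta(w,0)}^2}{\beta_\zeta(w,w)\,\beta_\zeta(0,0)}.
\end{align*}
Hence it suffices to show that this ratio is bounded above and bounded below by positive constants independent of $\zeta$, provided $w$ is confined to a small fixed ball $r_0\Bb$.

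For the upper bound I would simply apply Theorem~\ref{thm:Bergman_kernel_est}(2) with $a=b=0$ on any fixed $\delta\Bb$ (say $\delta = 1/2$) to see that $\abs{\beta_\zeta(w,0)}$ is uniformly bounded, together with the uniform lower bound $\beta_\zeta(w,w),\beta_\zeta(0,0)\geq C_2^{-1}$ from Theorem~\ref{thm:Bergman_kernel_est}(1). Combined, this gives the right inequality (and incidentally reproves the Cauchy--Schwarz estimate for these kernels, which would work just as well here).

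For the lower bound, the key observation is that the function $f_\zeta(w) := \beta_\zeta(w,0)$ is a family of functions on $\frac12\Bb$ with the following two uniform properties: $\abs{f_\zeta(0)} = \beta_\zeta(0,0) \geq C_2^{-1}$ by Theorem~\ref{thm:Bergman_kernel_est}(1), and the first partial derivatives of $f_\zeta$ in the $u$ variable are bounded by a constant $M$ independent of $\zeta$ by Theorem~\ref{thm:Bergman_kernel_est}(2). A direct application of the mean value inequality gives $\abs{f_\zeta(w) - f_\zeta(0)} \leq M\norm{w}_2$ for $w \in \frac12\Bb$, so choosing $r_0 := \min\{\frac{1}{2}, \frac{1}{2MC_2}\}$ forces $\abs{f_\zeta(w)} \geq \frac{1}{2C_2}$ for $w \in r_0\Bb$. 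Combined with the upper bound $\beta_\zeta(w,w) \leq C_2$ for the denominator, this gives a uniform positive lower bound on the ratio and completes the proof.

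The proof is almost entirely bookkeeping once one recognizes that the statement is really about the ratio being scale-free under the charts $\Phi_\zeta$; the only mild subtlety is to choose $r_0$ small enough (depending on the uniform derivative bound $M$ and the uniform diagonal lower bound $C_2^{-1}$) so that $f_\zeta(w)$ has not yet had a chance to vanish. I do not anticipate any serious obstacle.
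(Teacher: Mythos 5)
Your proof is correct and follows essentially the same route as the paper: reduce to the normalized kernels $\beta_\zeta$ via the transformation rule, bound $\beta_\zeta(w,w)$ and $\beta_\zeta(0,0)$ by Theorem~\ref{thm:Bergman_kernel_est}(1), and pin down $\abs{\beta_\zeta(w,0)}$ near the diagonal using the uniform derivative bounds of Theorem~\ref{thm:Bergman_kernel_est}(2). The only difference is that you spell out the choice of $r_0$ (via the mean value inequality) that the paper compresses into a single appeal to parts (1) and (2) of that theorem.
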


\begin{proof} By Theorem~\ref{thm:Bergman_kernel_est} part (1) and (2) there exists $r_0 > 0$ such that 
\begin{align*}
\frac{1}{2C_2^2} \leq \abs{\beta_\zeta(w,0)}^2 \leq 2C_2^2 
\end{align*}
for all $w \in r_0 \Bb$. Also, 
\begin{align*}
 \frac{1}{C_2} \abs{\det \Phi_\zeta^\prime(w)}^{-2} \leq \Bf_\Omega(\Phi_\zeta(w), \Phi_\zeta(w))  \leq C_2\abs{\det \Phi_\zeta^\prime(w)}^{-2}
\end{align*}
for all $w \in \Bb$. Now if $z = \Phi_\zeta(w)$ and $w \in r_0 \Bb$, then 
\begin{align*}
\abs{\Bf_\Omega(z,\zeta)}^2 = \abs{\beta_\zeta(w,0)}^2  \abs{\det \Phi_\zeta^\prime(w)}^{-2} \abs{\det \Phi_\zeta^\prime(0)}^{-2} 
\end{align*}
and so 
\begin{align*}
\frac{1}{C_3}\Bf_\Omega(z,z)\Bf_\Omega(\zeta,\zeta) \leq \abs{\Bf_\Omega(z,\zeta)}^2 \leq C_3 \Bf_\Omega(z,z)\Bf_\Omega(\zeta,\zeta)
\end{align*}
where $C_3 = 2 C_2^4$. 
\end{proof}

\begin{proposition}\label{prop:sMVT} There exists $C_4 > 0$ such that: if $r < C_1$ and $u : \Omega \rightarrow [0,\infty)$ is a function with $\log(u)$ plurisubharmomic, then 
\begin{align}
\label{eqn:sMVT}
u(\zeta) \leq \frac{C_4}{r^{2d}} \Bf_\Omega(\zeta,\zeta) \int_{\Bb_\Omega(\zeta,r)} u \, d\mu
\end{align}
for all $\zeta \in \Omega$. 
\end{proposition}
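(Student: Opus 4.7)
The plan is to transport the inequality onto the Euclidean ball $\Bb$ via the charts $\Phi_\zeta$ from Theorem~\ref{thm:good_charts}, then invoke the classical sub-mean value inequality for plurisubharmonic functions, and finally convert back using the change of variables formula together with the Bergman kernel estimate of Theorem~\ref{thm:Bergman_kernel_est}(1).

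First I would introduce the pulled-back weighted function
\[
\tilde{u}(w) := u(\Phi_\zeta(w)) \, \abs{\det \Phi_\zeta^\prime(w)}^2
\]
on $\Bb$. Since $\log u$ is plurisubharmonic on $\Omega$ and $\Phi_\zeta$ is holomorphic, $\log u \circ \Phi_\zeta$ is plurisubharmonic on $\Bb$. Because $\Phi_\zeta$ is a holomorphic embedding, $\det \Phi_\zeta^\prime$ is a nonvanishing holomorphic function, so $\log \abs{\det \Phi_\zeta^\prime}^2$ is pluriharmonic. Adding these shows that $\log \tilde{u}$ is plurisubharmonic, and therefore $\tilde{u}$ itself is plurisubharmonic (and nonnegative) on $\Bb$.

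Next, set $\rho := r/C_1$. Since $r < C_1$ we have $\rho < 1$, and by the distance estimate~\eqref{eqn:dist_comp} the image $\Phi_\zeta(\rho\Bb)$ lies inside $\Bb_\Omega(\zeta,r)$. The standard sub-mean value inequality on the Euclidean ball $\rho\Bb$ then gives
\[
\tilde{u}(0) \leq \frac{1}{\mu(\rho\Bb)} \int_{\rho\Bb} \tilde{u} \, d\mu = \frac{1}{c_d \rho^{2d}} \int_{\rho\Bb} \tilde{u} \, d\mu,
\]
where $c_d = \mu(\Bb)$.

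Finally I would unpack both sides. For the left-hand side, Theorem~\ref{thm:Bergman_kernel_est}(1) applied at $w=0$ yields
\[
\tilde{u}(0) = u(\zeta) \abs{\det \Phi_\zeta^\prime(0)}^2 = u(\zeta) \frac{\beta_\zeta(0,0)}{\Bf_\Omega(\zeta,\zeta)} \geq \frac{u(\zeta)}{C_2 \Bf_\Omega(\zeta,\zeta)}.
\]
For the right-hand side, the change of variables $z = \Phi_\zeta(w)$, with Jacobian $\abs{\det \Phi_\zeta^\prime(w)}^2$, gives
\[
\int_{\rho\Bb} \tilde{u}(w) \, d\mu(w) = \int_{\Phi_\zeta(\rho\Bb)} u(z) \, d\mu(z) \leq \int_{\Bb_\Omega(\zeta,r)} u \, d\mu,
\]
where the last inequality uses $u \geq 0$ and $\Phi_\zeta(\rho\Bb) \subset \Bb_\Omega(\zeta,r)$. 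Combining these with the sub-mean value inequality yields
\[
u(\zeta) \leq \frac{C_2 C_1^{2d}}{c_d \, r^{2d}} \Bf_\Omega(\zeta,\zeta) \int_{\Bb_\Omega(\zeta,r)} u \, d\mu,
\]
so $C_4 := C_2 C_1^{2d}/c_d$ works.

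No step is especially delicate; the only point that requires a small observation is the plurisubharmonicity of $\tilde{u}$, which hinges on $\det \Phi_\zeta^\prime$ being a nonvanishing holomorphic function so that its log-modulus contributes a pluriharmonic term.
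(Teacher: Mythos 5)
Your proof is correct and follows essentially the same route as the paper: pull back via $\Phi_\zeta$, observe that $u(\Phi_\zeta(w))\,\abs{\det \Phi_\zeta^\prime(w)}^2$ is plurisubharmonic (since its logarithm is the sum of a plurisubharmonic and a pluriharmonic term), apply the Euclidean sub-mean value inequality on $(r/C_1)\Bb$, and then invoke Theorem~\ref{thm:Bergman_kernel_est}(1); you even arrive at the same explicit constant $C_4 = C_1^{2d}C_2/\mu(\Bb)$.
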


\begin{proof} Theorem~\ref{thm:good_charts} implies that $\Phi_\zeta\left( \frac{r}{C_1} \Bb\right) \subset \Bb_\Omega(\zeta, r)$ and so 
\begin{align*}
\int_{\Bb_\Omega(\zeta,r)} u \, d\mu \geq \int_{\frac{r}{C_1}\Bb} (u \circ \Phi_\zeta)(w) \abs{\det \Phi_\zeta^\prime(w)}^2 d\mu(w).
\end{align*}
Notice that $\log  (u) \circ \Phi_\zeta+ \log  \abs{\det \Phi_\zeta^\prime}^2$ is plurisubharmonic and so 
\begin{align*}
\exp\left( \log  (u) \circ \Phi_\zeta+ \log  \abs{\det \Phi_\zeta^\prime}^2\right) = (u \circ \Phi_\zeta)\cdot \abs{\det \Phi_\zeta^\prime}^2
\end{align*}
is also plurisubharmonic. So by the mean value theorem for plurisubharmonic functions
\begin{align*}
 u(\zeta) \abs{\det \Phi_\zeta^\prime(0)}^2 \leq \frac{C_1^{2d}}{r^{2d} \mu(\Bb)} \int_{\Bb_\Omega(\zeta,r)} u \, d\mu. 
\end{align*}
So by Theorem~\ref{thm:Bergman_kernel_est}, 
\begin{align*}
u(\zeta) \leq \frac{C_4}{r^{2d}} \Bf_\Omega(\zeta,\zeta) \int_{\Bb_\Omega(\zeta,r)} u \, d\mu
\end{align*}
where $C_4 := \frac{C_1^{2d}C_2}{\mu(\Bb)}$.
\end{proof} 

As a consequence of the above proposition, the Bergman kernel has the following local positivity of mass. 

\begin{proposition} If $r < C_1$ and $\zeta \in \Omega$, then 
\begin{align*}
\int_{\Omega} \abs{\Bf_\Omega(z,\zeta)}^2 d\mu(z) \leq \frac{C_4}{r^{2d}} \int_{\Bb_\Omega(\zeta,r)} \abs{\Bf_\Omega(z,\zeta)}^2 d\mu(z).
\end{align*}
\end{proposition}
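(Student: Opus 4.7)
The plan is to deduce the estimate from Proposition~\ref{prop:sMVT} together with the reproducing property of the Bergman kernel.

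First, I would rewrite the left-hand side using the reproducing property: since $\Bf_\Omega(\cdot,\zeta) \in A^2(\Omega)$ and the Bergman kernel reproduces itself,
$$\int_{\Omega} \abs{\Bf_\Omega(z,\zeta)}^2 d\mu(z) = \Bf_\Omega(\zeta,\zeta).$$
So the desired inequality reduces to
$$\Bf_\Omega(\zeta,\zeta) \leq \frac{C_4}{r^{2d}} \int_{\Bb_\Omega(\zeta,r)} \abs{\Bf_\Omega(z,\zeta)}^2 d\mu(z).$$

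Next, I would apply Proposition~\ref{prop:sMVT} to the function $u(z) := \abs{\Bf_\Omega(z,\zeta)}^2$. Since $z \mapsto \Bf_\Omega(z,\zeta)$ is holomorphic on $\Omega$, the function $\log u = 2 \log \abs{\Bf_\Omega(\cdot,\zeta)}$ is plurisubharmonic on $\Omega$ (with value $-\infty$ on the zero set of $\Bf_\Omega(\cdot,\zeta)$), so the hypotheses of Proposition~\ref{prop:sMVT} are met. The proposition then yields
$$\Bf_\Omega(\zeta,\zeta)^2 = u(\zeta) \leq \frac{C_4}{r^{2d}} \Bf_\Omega(\zeta,\zeta) \int_{\Bb_\Omega(\zeta,r)} \abs{\Bf_\Omega(z,\zeta)}^2 d\mu(z),$$
and dividing through by $\Bf_\Omega(\zeta,\zeta) > 0$ gives the claim.

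There is no real obstacle here: the statement is essentially an immediate corollary of the preceding proposition combined with the reproducing formula. The only item that deserves explicit verification is the plurisubharmonicity of $\log u$, which is standard since $u$ is the squared modulus of a holomorphic function.
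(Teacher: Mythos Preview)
Your proof is correct and follows exactly the same route as the paper: apply Proposition~\ref{prop:sMVT} to $u=\abs{\Bf_\Omega(\cdot,\zeta)}^2$ and use the reproducing identity $\Bf_\Omega(\zeta,\zeta)=\int_\Omega \abs{\Bf_\Omega(z,\zeta)}^2\,d\mu(z)$.
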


\begin{proof} Apply Equation~\eqref{eqn:sMVT} to  $u= \abs{\Bf_\Omega(\cdot, \zeta)}^2$ and recall that 
\begin{align*}
\Bf_\Omega(\zeta,\zeta) =  \int_{\Omega} \abs{\Bf_\Omega(z,\zeta)}^2 d\mu(z).
\end{align*}
\end{proof}

We also obtain the following estimate on the volume form induced by the Bergman metric. 

\begin{proposition}\label{prop:volume_comp} There exists $C_5 > 1$ such that
\begin{align*}
\frac{1}{C_5}\Bf_\Omega(z,z)d\mu(z) \leq dV_\Omega(z) \leq C_5\Bf_\Omega(z,z)d\mu(z)
\end{align*}
on $\Omega$. 
\end{proposition}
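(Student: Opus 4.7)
The plan is to reduce the estimate to a pointwise comparison between $\det[g_\Omega(\partial/\partial z_j,\partial/\partial \bar z_k)](\zeta)$ and $\Bf_\Omega(\zeta,\zeta)$, and then exploit the biholomorphic invariance of the Bergman metric together with the normalizing embeddings $\Phi_\zeta$ from Theorem~\ref{thm:good_charts}. Since $dV_\Omega = |\det[g_\Omega(\partial/\partial z_j,\partial/\partial \bar z_k)]|\, d\mu$ by definition, the proposition is equivalent to showing
\begin{align*}
\det\left[g_\Omega\bigl(\tfrac{\partial}{\partial z_j}, \tfrac{\partial}{\partial \bar z_k}\bigr)\right](\zeta) \asymp \Bf_\Omega(\zeta,\zeta)
\end{align*}
uniformly in $\zeta \in \Omega$, and everything else is standard.

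Fix $\zeta \in \Omega$ and consider the embedding $\Phi_\zeta:\Bb \to \Omega$. By the transformation rule for the Bergman kernel under biholomorphisms,
\begin{align*}
\log \beta_\zeta(w,w) = \log \Bf_\Omega(\Phi_\zeta(w),\Phi_\zeta(w)) + \log\abs{\det \Phi_\zeta^\prime(w)}^{2}.
\end{align*}
The second term is pluriharmonic, so taking the Levi form in $w$ kills it, and I conclude that $\Levi\bigl(\log\beta_\zeta(w,w)\bigr) = \Phi_\zeta^{*} g_\Omega$. Theorem~\ref{thm:good_charts} then gives that this pulled-back Hermitian form is uniformly bi-Lipschitz to $g_{\Euc}$ on $\Bb$, so in particular the Hermitian determinant at $w = 0$ satisfies
\begin{align*}
\det\bigl[(\Phi_\zeta^{*} g_\Omega)_{j\bar k}\bigr](0) \asymp 1,
\end{align*}
with constants independent of $\zeta$.

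Next, the tensorial change-of-variables formula gives
\begin{align*}
\det\bigl[(\Phi_\zeta^{*} g_\Omega)_{j\bar k}\bigr](0) = \det\left[g_\Omega\bigl(\tfrac{\partial}{\partial z_j}, \tfrac{\partial}{\partial \bar z_k}\bigr)\right](\zeta)\cdot \abs{\det \Phi_\zeta^\prime(0)}^{2}.
\end{align*}
Finally, part (1) of Theorem~\ref{thm:Bergman_kernel_est} together with $\beta_\zeta(0,0) = \Bf_\Omega(\zeta,\zeta)\abs{\det \Phi_\zeta^\prime(0)}^{2}$ yields $\abs{\det \Phi_\zeta^\prime(0)}^{2} \asymp 1/\Bf_\Omega(\zeta,\zeta)$ uniformly in $\zeta$. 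Substituting this into the previous display and using the uniform bound from the second paragraph gives the desired comparison and hence the proposition.

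There is no real obstacle here: the only thing to be careful about is correctly tracking the Jacobian factor when pulling back the $(1,1)$-form $g_\Omega$ and verifying that the pluriharmonic term drops out of the Levi form. All uniformity comes directly from the two quoted theorems, so no further bounded-geometry input is needed.
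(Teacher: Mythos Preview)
Your argument is correct and follows essentially the same route as the paper: bound $\det[(\Phi_\zeta^* g_\Omega)_{j\bar k}](0)\asymp 1$ via Theorem~\ref{thm:good_charts}, convert this into a bound on $\det[g_{\Omega,\zeta}(\partial_{z_j},\partial_{\bar z_k})]$ via the Jacobian factor, and then use Theorem~\ref{thm:Bergman_kernel_est}(1) to replace $\abs{\det\Phi_\zeta'(0)}^{-2}$ by $\Bf_\Omega(\zeta,\zeta)$. The Levi-form computation identifying $\Phi_\zeta^*g_\Omega$ with $\Levi(\log\beta_\zeta(w,w))$ is correct but unnecessary, since Theorem~\ref{thm:good_charts} already states the bi-Lipschitz bound on $\Phi_\zeta^*g_\Omega$ directly.
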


\begin{proof} By Theorem~\ref{thm:good_charts}
\begin{align*}
\frac{1}{C_1^{2d}} \leq \abs{\det \left[(\Phi_\zeta^*g_\Omega)_0\left( \frac{\partial}{\partial z_j}, \frac{\partial}{\partial \bar{z}_k} \right)\right]} \leq C_1^{2d}
\end{align*}
and so
\begin{align*}
\frac{1}{C_1^{2d}} \abs{\det \Phi_\zeta^\prime(0)}^{-2} \leq \abs{\det\left[ g_{\Omega,\zeta}\left( \frac{\partial}{\partial z_j}, \frac{\partial}{\partial \bar{z}_k} \right)\right]} \leq C_1^{2d}\abs{\det \Phi_\zeta^\prime(0)}^{-2}.
\end{align*}
Hence Theorem~\ref{thm:Bergman_kernel_est} implies that 
\begin{align*}
\frac{1}{C_5}\Bf_\Omega(z,z)d\mu(z) \leq dV_\Omega(z) \leq C_5\Bf_\Omega(z,z)d\mu(z)
\end{align*}
where $C_5 = C_1^{2d}C_2$. 
\end{proof}

For each $\zeta \in \Omega$, consider the function 
\begin{align*}
s_\zeta = \frac{1}{\sqrt{\Bf_\Omega(\zeta,\zeta)}} \Bf_\Omega(\cdot, \zeta).
\end{align*}
Then $s_\zeta \in A^2(\Omega)$ and $\norm{s_\zeta}_2 = 1$. As an application of Proposition~\ref{prop:sMVT} and the completeness of the Bergman metric, we have the following convergence result. 

\begin{proposition}\label{prop:weak_convergence} If $\zeta_m \rightarrow \partial \Omega$, then $s_{\zeta_m}$ converges locally uniformly to the zero function. \end{proposition}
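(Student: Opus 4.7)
The strategy is a normal-families argument: I will show that every subsequence of $(s_{\zeta_m})$ has a further subsequence converging locally uniformly to the zero function, which then forces the full sequence to converge locally uniformly to zero.

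First I would establish normality. Cauchy--Schwarz for the reproducing kernel gives $\abs{s_{\zeta_m}(z)}^2 = \abs{\Bf_\Omega(z,\zeta_m)}^2 / \Bf_\Omega(\zeta_m,\zeta_m) \leq \Bf_\Omega(z,z)$, so the family is uniformly bounded on every compact subset of $\Omega$. By Montel's theorem any subsequence has a further subsequence $(s_{\zeta_{m_k}})$ converging locally uniformly to some holomorphic $f$ on $\Omega$, and by Fatou's lemma $f \in A^2(\Omega)$ with $\norm{f}_2 \leq 1$. Since the sequence is norm-bounded in the Hilbert space $A^2(\Omega)$, a standard argument (extract a weakly convergent subsequence via Banach--Alaoglu and identify its pointwise values through the reproducing kernel) upgrades this to weak convergence: $s_{\zeta_{m_k}} \rightharpoonup f$ in $A^2(\Omega)$.

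The key identity is the reproducing property, which gives $\ip{s_{\zeta_m}, g}_{A^2} = \overline{g(\zeta_m)}/\sqrt{\Bf_\Omega(\zeta_m,\zeta_m)}$ for every $g \in A^2(\Omega)$. Testing weak convergence against $g = f$ yields $\norm{f}_2^2 = \lim_k \overline{f(\zeta_{m_k})}/\sqrt{\Bf_\Omega(\zeta_{m_k},\zeta_{m_k})}$, so it suffices to prove that $\abs{f(\zeta_m)}^2/\Bf_\Omega(\zeta_m,\zeta_m) \to 0$. Applying Proposition~\ref{prop:sMVT} to $u = \abs{f}^2$ (whose logarithm is plurisubharmonic since $f$ is holomorphic) bounds this ratio by $(C_4/r^{2d}) \int_{\Bb_\Omega(\zeta_m,r)} \abs{f}^2\, d\mu$ for any fixed $r < C_1$. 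Because the Bergman metric is complete by Theorem~\ref{thm:bergman_good}, every compact $K \subset \Omega$ has finite Bergman diameter by Hopf--Rinow, and since $\zeta_m \to \partial\Omega$ implies $\dist_\Omega(\zeta_m, z_0) \to \infty$ for a fixed basepoint, we conclude $\Bb_\Omega(\zeta_m, r) \cap K = \emptyset$ once $m$ is large. Exhausting $\Omega$ by an increasing sequence of compact sets $K_n$ and using that $f \in L^2(\Omega)$ forces $\int_{\Omega \setminus K_n} \abs{f}^2\, d\mu \to 0$, this integral tends to zero, and hence $\norm{f}_2^2 = 0$.

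The main obstacle is precisely the final implication that the weak limit $f$ must vanish; the naive Cauchy--Schwarz bound only yields $\norm{f}_2 \leq 1$, which is consistent with $f \not\equiv 0$. The crucial ingredient is to combine the reproducing-kernel identity $\ip{s_{\zeta_m}, f} = \overline{f(\zeta_m)}/\sqrt{\Bf_\Omega(\zeta_m,\zeta_m)}$ with the sub-mean-value estimate of Proposition~\ref{prop:sMVT} and the completeness of the Bergman metric; this is precisely where the bounded-intrinsic-geometry hypothesis is essential.
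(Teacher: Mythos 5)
Your proof is correct, and while it shares the paper's core quantitative input, it finishes differently. Both arguments start from Montel (normality following from $\abs{s_{\zeta_m}(z)}^2\le \Bf_\Omega(z,z)$), extract a locally uniform subsequential limit $f\in A^2(\Omega)$, and both hinge on the same key estimate: Proposition~\ref{prop:sMVT} applied to $\abs{f}^2$, combined with properness of the Bergman distance (Hopf--Rinow), to conclude $\abs{f(\zeta_m)}^2/\Bf_\Omega(\zeta_m,\zeta_m)\to 0$. Where you diverge is the concluding step. The paper argues by contradiction: it assumes $f\not\equiv 0$, establishes the norm estimate $\limsup_m\norm{s_{\zeta_m}-f}_2\le 1-\norm{f}_2$ (via a carefully chosen exhaustion $K_m$ and a further subsequence), and then shows that the rescaled function $h_m=\tfrac{1}{1-(1/2)\norm{f}_2}(s_{\zeta_m}-f)$ violates the extremal characterization $\sqrt{\Bf_\Omega(\zeta,\zeta)}=\sup\{\abs{h(\zeta)}:\norm{h}_2\le 1\}$. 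You instead upgrade locally uniform convergence to weak convergence in $A^2(\Omega)$ (legitimate, since point evaluations are represented by the kernel and the sequence is norm-bounded) and then evaluate $\norm{f}_2^2=\lim_k\ip{s_{\zeta_{m_k}},f}=\lim_k \overline{f(\zeta_{m_k})}/\sqrt{\Bf_\Omega(\zeta_{m_k},\zeta_{m_k})}=0$ directly from the reproducing identity. Your route is arguably cleaner: it avoids the paper's auxiliary norm estimate and the contradiction setup, replacing them with a one-line computation once weak convergence is in hand. What the paper's version buys is that it never needs to invoke weak compactness or identify weak limits with locally uniform limits; it works entirely with the extremal property of the kernel. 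Both are complete proofs.
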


\begin{proof} Suppose not. Then after passing to a subsequence we can suppose that $s_{\zeta_m} \rightarrow f$ locally uniformly where $f$ is holomorphic and non-zero. By Fatou's lemma, $\int_\Omega \abs{f}^2 d\mu \leq 1$. 

Fix a sequence of compact sets $(K_m)_{m \geq 1}$ in $\Omega$ with $\int_{K_m} \abs{f}^2 d\mu \rightarrow \int_\Omega \abs{f}^2 d\mu$. Replacing $(s_{\zeta_m})_{m \geq 1}$ with a subsequence, we can assume 
\begin{align*}
\lim_{m \rightarrow \infty} \int_{K_m} \abs{s_{\zeta_{m}}-f}^2 d\mu = 0.
\end{align*}
Then 
\begin{align*}
\limsup_{m \rightarrow \infty}\norm{s_{\zeta_m}-f}_2 & = \limsup_{m \rightarrow \infty}\norm {(s_{\zeta_m}-f) \mathds{1}_{\Omega \setminus K_m}}_2 \leq \limsup_{m \rightarrow \infty}\norm {s_{\zeta_m}\mathds{1}_{\Omega \setminus K_m}}_2 + \norm{f \mathds{1}_{\Omega \setminus K_m}}_2 \\
& = \limsup_{m \rightarrow \infty} 1 - \norm{s_{\zeta_m} \mathds{1}_{K_m}}_2 = 1 -\norm{f}_2. 
\end{align*}

Fix $r < C_1$. Applying Proposition~\ref{prop:sMVT} to $\abs{f}^2$ yields 
\begin{align*}
\abs{f(\zeta_m)} \leq \frac{C_4}{r^{2d}}\sqrt{\Bf_\Omega(\zeta_m,\zeta_m)} \left(\int_{\Bb_\Omega(\zeta_m,r)} \abs{f}^2 d\mu\right)^{1/2}
\end{align*}
for all $m \geq 1$. Since the Bergman metric is proper (by the Hopf-Rinow theorem) and $\zeta_m \rightarrow \partial \Omega$, for any compact set $K \subset \Omega$ the sets 
\begin{align*}
\Bb_\Omega(\zeta_m,r) \cap K
\end{align*}
are eventually empty. Then, since $f \in L^2(\Omega)$, we have 
\begin{align*}
\lim_{m \rightarrow \infty} \int_{\Bb_\Omega(\zeta_m,r)} \abs{f}^2 d\mu = 0
\end{align*}
and so 
\begin{align*}
\lim_{m \rightarrow \infty} \frac{\abs{f(\zeta_m)}}{\sqrt{\Bf_\Omega(\zeta_m,\zeta_m)}}=0. 
\end{align*}

Finally, let 
\begin{align*}
h_m = \frac{1}{1-(1/2)\norm{f}_2} (s_{\zeta_{m}}-f).
\end{align*}
Then for $m$ large we have $\norm{h_m}_2 < 1$ and 
\begin{align*}
\abs{h_m(\zeta_{m})} > \sqrt{\Bf_\Omega(\zeta_{m},\zeta_{m})}
\end{align*}
which is impossible since
\begin{align*}
\sqrt{\Bf_\Omega(\zeta,\zeta)} = \sup \left\{ \abs{h(\zeta)} : h \in A^2(\Omega), \norm{h}_2 \leq 1\right\}. 
\end{align*}

\end{proof}

\section{Multiplication operators}\label{sec:multiplication_operators}

Given $\phi \in L^{2}(\Omega)$, the associated \emph{multiplication operator} $M_\phi$ has domain 
\begin{align*}
{\rm dom}(M_\phi) = \left\{ f \in A^2(\Omega) : \phi \cdot f \in L^2(\Omega)\right\}
\end{align*}
and is defined by 
\begin{align*}
M_\phi(f) = \phi \cdot f.
\end{align*}

\begin{proposition}\label{prop:multiplication_operators} Suppose  $\Omega \subset \Cb^d$ is a bounded domain with bounded intrinsic geometry and $\phi \in L^2(\Omega)$. Then:
\begin{enumerate}
\item The following are equivalent: 
\begin{enumerate}
\item  there exists $r > 0$ such that
\begin{align*}
\sup_{\zeta \in \Omega}  \int_{\Bb_\Omega(\zeta,r)} \abs{\phi}^2  dV_\Omega  <+\infty,
\end{align*}
\item ${\rm dom}(M_\phi) = A^2(\Omega)$ and $M_\phi : A^2(\Omega) \rightarrow L^2(\Omega)$ is bounded.
\end{enumerate}
\item The following are equivalent: 
\begin{enumerate}
\item[(a')]  there exists $r > 0$ such that
\begin{align*}
\lim_{\zeta \rightarrow \partial \Omega}  \int_{\Bb_\Omega(\zeta,r)} \abs{\phi}^2  dV_\Omega  = 0,
\end{align*}
\item[(b')] ${\rm dom}(M_\phi) = A^2(\Omega)$ and $M_\phi : A^2(\Omega) \rightarrow L^2(\Omega)$ is compact. 
\end{enumerate}
\end{enumerate}
\end{proposition}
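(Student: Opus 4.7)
The plan is to treat boundedness and compactness in parallel, using the same two devices: the submean-value inequality of Proposition~\ref{prop:sMVT} (which lets us estimate $|f|^2$ pointwise by a local integral average) together with the discretization of Corollary~\ref{cor:discretization}, to pass from local information to a global norm estimate; and the off-diagonal estimate of Proposition~\ref{prop:off_diagonal_estimate}, which makes the normalized Bergman kernel $s_\zeta$ a good probe of $\phi$ near $\zeta$.

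For the direction (a)$\Rightarrow$(b) in part (1), fix $r>0$ and let $(\zeta_m)$ be an $r$-separated set as in Corollary~\ref{cor:discretization}, so that $\Omega = \bigcup_m \Bb_\Omega(\zeta_m,r)$ and the enlarged balls $\Bb_\Omega(\zeta_m,2r)$ cover $\Omega$ with finite multiplicity $N=N(r)$. For $f \in A^2(\Omega)$ and $z \in \Bb_\Omega(\zeta_m, r)$, Proposition~\ref{prop:sMVT} applied to $|f|^2$ gives
\begin{align*}
|f(z)|^2 \lesssim \Bf_\Omega(z,z) \int_{\Bb_\Omega(\zeta_m, 2r)} |f|^2 \, d\mu.
\end{align*}
Combined with $\Bf_\Omega(z,z) \, d\mu \asymp dV_\Omega$ from Proposition~\ref{prop:volume_comp}, this yields
\begin{align*}
\int_{\Bb_\Omega(\zeta_m,r)} |\phi|^2 |f|^2 \, d\mu \lesssim \left(\int_{\Bb_\Omega(\zeta_m, r)} |\phi|^2 \, dV_\Omega\right) \left(\int_{\Bb_\Omega(\zeta_m, 2r)} |f|^2 \, d\mu\right).
\end{align*}
Summing over $m$ and using the finite multiplicity of the cover produces $\|\phi f\|_2^2 \lesssim \big(\sup_\zeta \int_{\Bb_\Omega(\zeta,r)} |\phi|^2 dV_\Omega\big)\|f\|_2^2$, which is (b).

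For (b)$\Rightarrow$(a), I will test against the unit vector $s_\zeta = \Bf_\Omega(\cdot,\zeta)/\sqrt{\Bf_\Omega(\zeta,\zeta)}$. Fix $r \in (0, r_0/C_1)$ so that $\Bb_\Omega(\zeta,r) \subset \Phi_\zeta(r_0\Bb)$. Then Proposition~\ref{prop:off_diagonal_estimate} gives $|s_\zeta(z)|^2 \geq C_3^{-1}\Bf_\Omega(z,z)$ on $\Bb_\Omega(\zeta,r)$, and combining with Proposition~\ref{prop:volume_comp} we obtain $|s_\zeta|^2 \, d\mu \gtrsim dV_\Omega$ on that ball. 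Hence
\begin{align*}
\int_{\Bb_\Omega(\zeta,r)} |\phi|^2 \, dV_\Omega \lesssim \int_{\Bb_\Omega(\zeta,r)} |\phi|^2 |s_\zeta|^2 \, d\mu \leq \|M_\phi s_\zeta\|_2^2 \leq \|M_\phi\|^2,
\end{align*}
uniformly in $\zeta$, giving (a).

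For part (2), the implication (a')$\Rightarrow$(b') follows the template of (a)$\Rightarrow$(b) plus a standard truncation. The decay of $\int_{\Bb_\Omega(\zeta,r)}|\phi|^2 dV_\Omega$ at the boundary combined with the interior bound (from Proposition~\ref{prop:volume_comp} and the properness of the Bergman distance) implies (a), so $M_\phi$ is bounded. Given $(f_k) \subset A^2(\Omega)$ with $\|f_k\|_2 \leq 1$, the submean-value inequality yields equicontinuity on compacta, so we can pass to a subsequence with $f_k \to f$ locally uniformly and $f \in A^2(\Omega)$. Working with $f_k - f$, we split $\Omega$ into a compact piece $K$ and a complement: on $K$ the uniform convergence combined with $|\phi|^2 \in L^1(K)$ gives $\int_K |\phi|^2|f_k-f|^2 \, d\mu \to 0$, while on $\Omega\setminus K$ the sub-MVT/discretization argument above applied only to the boundary balls gives $\int_{\Omega\setminus K}|\phi|^2|f_k-f|^2 d\mu \lesssim \sup_{\zeta_m \text{ near }\partial\Omega} \int_{\Bb_\Omega(\zeta_m,r)}|\phi|^2 dV_\Omega$, which can be made arbitrarily small by enlarging $K$. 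Conversely for (b')$\Rightarrow$(a'), I first observe that $s_\zeta \to 0$ weakly in $A^2(\Omega)$ as $\zeta \to \partial\Omega$: by Proposition~\ref{prop:weak_convergence} this convergence holds locally uniformly, and since $\|s_\zeta\|_2 = 1$ the standard $K$/complement splitting in the pairing $\langle s_\zeta, g\rangle$ for $g \in A^2(\Omega)$ gives weak convergence. Compactness of $M_\phi$ then forces $\|\phi s_\zeta\|_2 \to 0$, and the lower bound $|s_\zeta|^2 d\mu \gtrsim dV_\Omega$ on $\Bb_\Omega(\zeta,r)$ from the previous paragraph yields (a').

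The main technical point is the off-diagonal lower bound on $|s_\zeta|$ near the diagonal, which is what makes $s_\zeta$ concentrate enough near $\zeta$ to dominate $dV_\Omega$ from below; this is already supplied by Proposition~\ref{prop:off_diagonal_estimate}. Apart from that, the only subtle step is the verification that local uniform convergence on $\Omega$ combined with a uniform $L^2$ bound implies weak convergence in $A^2(\Omega)$, which is a standard truncation argument using the $L^2$-boundedness of the $s_\zeta$.
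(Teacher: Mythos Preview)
Your proposal is correct and follows essentially the same approach as the paper: the same ingredients (the submean-value inequality of Proposition~\ref{prop:sMVT}, the discretization of Corollary~\ref{cor:discretization}, the volume comparison of Proposition~\ref{prop:volume_comp}, and the normalized kernels $s_\zeta$ as probes via Proposition~\ref{prop:off_diagonal_estimate}) are assembled in the same way. The only cosmetic difference is in (a')$\Rightarrow$(b'), where the paper tests compactness on weakly null sequences (using that weak convergence in $A^2(\Omega)$ forces local uniform convergence) while you extract a locally uniformly convergent subsequence directly; the underlying compact-set/boundary splitting is identical.
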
 

The rest of the section is devoted to the proof of the theorem. 

\begin{lemma} (b') $\Rightarrow$ (a'). \end{lemma}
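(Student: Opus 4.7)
The plan is to test the compact operator $M_\phi$ against the normalized reproducing kernels $s_\zeta$ constructed in Section~\ref{sec:estimates}, and then use the near-diagonal lower bound on the Bergman kernel together with the volume comparison to extract the integral we want.

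First I would fix a sequence $\zeta_m \to \partial\Omega$ in $\Omega$ and consider $s_{\zeta_m} \in A^2(\Omega)$. Since $\norm{s_{\zeta_m}}_2 = 1$ and, by Proposition~\ref{prop:weak_convergence}, $s_{\zeta_m} \to 0$ locally uniformly, the sequence $(s_{\zeta_m})$ converges weakly to $0$ in $A^2(\Omega)$: any $L^2$-bounded family in $A^2(\Omega)$ that converges pointwise to $0$ must converge weakly to $0$, since the reproducing property identifies the inner product $\ip{s_{\zeta_m}, h}$ (for $h$ with compact support in $\Omega$, or more generally for $h \in A^2(\Omega)$ approximated by such, via dominated convergence using the locally uniform bound on reproducing kernels).

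Assuming (b'), the compact operator $M_\phi : A^2(\Omega) \to L^2(\Omega)$ sends the weakly null sequence $(s_{\zeta_m})$ to a strongly null sequence, i.e.
\begin{align*}
\lim_{m \to \infty} \norm{\phi \cdot s_{\zeta_m}}_2^2
= \lim_{m \to \infty} \frac{1}{\Bf_\Omega(\zeta_m,\zeta_m)} \int_\Omega \abs{\phi(z)}^2 \abs{\Bf_\Omega(z,\zeta_m)}^2 \, d\mu(z) = 0.
\end{align*}

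Next I would extract from this the desired decay. Set $r = r_0/C_1$ with $r_0$ from Proposition~\ref{prop:off_diagonal_estimate}, so that $\Bb_\Omega(\zeta, r) \subset \Phi_\zeta(r_0\Bb)$ for every $\zeta \in \Omega$. On this set Proposition~\ref{prop:off_diagonal_estimate} gives
\begin{align*}
\abs{\Bf_\Omega(z,\zeta)}^2 \geq \frac{1}{C_3} \Bf_\Omega(z,z) \Bf_\Omega(\zeta,\zeta),
\end{align*}
and the volume comparison in Proposition~\ref{prop:volume_comp} gives $\Bf_\Omega(z,z) \, d\mu(z) \geq \frac{1}{C_5} dV_\Omega(z)$. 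Restricting the integral over $\Omega$ to $\Bb_\Omega(\zeta, r)$ and combining these bounds yields
\begin{align*}
\norm{\phi \cdot s_\zeta}_2^2 \geq \frac{1}{C_3 C_5} \int_{\Bb_\Omega(\zeta, r)} \abs{\phi}^2 \, dV_\Omega
\end{align*}
for every $\zeta \in \Omega$. Combined with the previous limit along an arbitrary sequence $\zeta_m \to \partial\Omega$, this gives (a').

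The only mildly delicate step is verifying the weak convergence $s_{\zeta_m} \rightharpoonup 0$; the rest is a direct application of the tools already assembled in Section~\ref{sec:estimates}, so there is no real obstacle beyond packaging the estimates correctly.
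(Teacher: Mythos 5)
Your proof is correct and takes essentially the same route as the paper's: test the compact operator $M_\phi$ against the normalized kernels $s_{\zeta_m}$, use Propositions~\ref{prop:off_diagonal_estimate} and~\ref{prop:volume_comp} to bound $\int_{\Bb_\Omega(\zeta,r)}\abs{\phi}^2\,dV_\Omega$ from above by a constant times $\norm{\phi\cdot s_{\zeta_m}}_2^2$, and conclude from compactness applied to the weakly null sequence. The only difference is that you spell out why locally uniform convergence plus norm-boundedness gives weak convergence to $0$, a step the paper leaves implicit when invoking Proposition~\ref{prop:weak_convergence}.
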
 

\begin{proof} As in Section~\ref{sec:estimates}, for each $\zeta \in \Omega$, consider the function 
\begin{align*}
s_\zeta = \frac{1}{\sqrt{\Bf_\Omega(\zeta,\zeta)}} \Bf_\Omega(\cdot, \zeta) \in A^2(\Omega). 
\end{align*}
Using Propositions~\ref{prop:off_diagonal_estimate} and~\ref{prop:volume_comp} we can fix $r > 0$ and $C > 0$ such that 
\begin{align*}
\frac{1}{C} dV_\Omega \leq \abs{s_{\zeta}}^2 d\mu \leq C dV_\Omega
\end{align*}
on $\Bb_\Omega(\zeta,r)$.

Fix a sequence $(\zeta_m)_{m \geq 1}$ where $\zeta_m \rightarrow \partial \Omega$ and  
\begin{align*}
\limsup_{\zeta \rightarrow \partial \Omega}  \int_{\Bb_\Omega(\zeta,r)} \abs{\phi}^2  dV_\Omega  = \lim_{m \rightarrow \infty}  \int_{\Bb_\Omega(\zeta_m,r)} \abs{\phi}^2  dV_\Omega.
\end{align*}
By Proposition~\ref{prop:weak_convergence} the sequence $(s_{\zeta_m})_{m \geq 1}$ converges weakly to 0. Since $M_\phi$ is compact, then 
\begin{align*}
\lim_{m \rightarrow \infty}  \int_{\Omega} \abs{\phi \cdot s_\zeta}^2  d\mu = 0.
\end{align*}
 Then
\begin{align*}
\lim_{m \rightarrow \infty} \int_{\Bb_\Omega(\zeta_m,r)} \abs{\phi}^2  dV_\Omega  \leq C \lim_{m \rightarrow \infty}  \int_{\Omega} \abs{\phi \cdot s_\zeta}^2  d\mu  =0
\end{align*}
and the proof is complete. 
\end{proof}

\begin{lemma} (b) $\Rightarrow$ (a). \end{lemma}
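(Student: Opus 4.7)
The plan is to mirror the proof of (b$'$) $\Rightarrow$ (a$'$) that was just given, replacing the weak-convergence/compactness input with the elementary observation that a bounded operator sends the unit ball to a bounded set. The only ingredients are the normalized Bergman kernel sections $s_\zeta = \Bf_\Omega(\cdot,\zeta)/\sqrt{\Bf_\Omega(\zeta,\zeta)}$ and the lower bound $|s_\zeta|^2\, d\mu \geq \tfrac{1}{C} dV_\Omega$ on $\Bb_\Omega(\zeta,r)$, which were already extracted from Propositions~\ref{prop:off_diagonal_estimate} and~\ref{prop:volume_comp} in the previous lemma. Concretely, I would fix the same $r > 0$ and constant $C > 0$ producing this pointwise estimate; these depend only on $\Omega$.

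Given (b), the multiplication operator $M_\phi : A^2(\Omega) \to L^2(\Omega)$ is everywhere defined and bounded, so let $K := \norm{M_\phi}_{\rm op} < +\infty$. Since $\norm{s_\zeta}_2 = 1$ for every $\zeta \in \Omega$, we immediately obtain $\norm{\phi \cdot s_\zeta}_2 \leq K$. For any $\zeta \in \Omega$, the chain of inequalities
\begin{align*}
\int_{\Bb_\Omega(\zeta,r)} \abs{\phi}^2 \, dV_\Omega
\leq C \int_{\Bb_\Omega(\zeta,r)} \abs{\phi}^2 \abs{s_\zeta}^2\, d\mu
\leq C \int_\Omega \abs{\phi \cdot s_\zeta}^2\, d\mu
\leq C K^2
\end{align*}
then yields the desired uniform bound, completing (b) $\Rightarrow$ (a).

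There is no real obstacle: all the technical work (the lower bound for $|s_\zeta|^2$ in terms of $dV_\Omega$ on a Bergman ball, and the fact that $s_\zeta$ is $L^2$-normalized) has already been done in Section~\ref{sec:estimates} and used in the preceding lemma. The only change from the compactness argument is that one no longer needs $s_{\zeta_m} \to 0$ weakly; one simply uses that $\{s_\zeta\}_{\zeta \in \Omega}$ is a bounded family in $A^2(\Omega)$, so $M_\phi$ being bounded forces $\norm{\phi \cdot s_\zeta}_2$ to be uniformly bounded, which transfers via the pointwise estimate to a uniform bound on the localized integrals $\int_{\Bb_\Omega(\zeta,r)} |\phi|^2 \, dV_\Omega$.
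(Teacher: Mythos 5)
Your proof is correct and is exactly the adaptation the paper intends when it says the argument is ``very similar'' to the (b$'$) $\Rightarrow$ (a$'$) case: you keep the estimate $\frac{1}{C}dV_\Omega \leq \abs{s_\zeta}^2 d\mu$ on $\Bb_\Omega(\zeta,r)$ and simply replace the weak-convergence/compactness step by the operator-norm bound $\norm{\phi\cdot s_\zeta}_2 \leq \norm{M_\phi}_{\rm op}$ for the unit vectors $s_\zeta$.
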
 

\begin{proof} Very similar to the proof that (b') $\Rightarrow$ (a').  \end{proof}

\begin{lemma}\label{lem:multiplication_a_implies_b} (a) $\Rightarrow$ (b). \end{lemma}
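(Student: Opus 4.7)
The plan is to combine the discretization of $\Omega$ given by Corollary~\ref{cor:discretization}, the submean value estimate of Proposition~\ref{prop:sMVT}, and the volume comparison of Proposition~\ref{prop:volume_comp} to reduce the $L^2$-norm of $\phi \cdot f$ to a sum of local contributions each controlled by the hypothesis.

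First, let $r>0$ be as in (a); shrinking $r$ if needed (which only decreases the integrals in (a)), we may assume that $r$ is small enough that Proposition~\ref{prop:sMVT} applies and that the Bergman charts $\Phi_\zeta$ from Theorem~\ref{thm:good_charts} cover $\Bb_\Omega(\zeta,2r)$ by the image of a relatively compact ball in $\Bb$. Let $(\zeta_m)_{m\geq 1}$ be the $r$-separated net provided by Corollary~\ref{cor:discretization}, so that $\Omega = \cup_m \Bb_\Omega(\zeta_m,r)$ and the family $\{\Bb_\Omega(\zeta_m,2r)\}_{m\geq 1}$ has uniformly bounded overlap. Fix $f \in A^2(\Omega)$. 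For $z \in \Bb_\Omega(\zeta_m,r)$ one has $\Bb_\Omega(z,r) \subset \Bb_\Omega(\zeta_m,2r)$, and since $\log|f|^2$ is plurisubharmonic, Proposition~\ref{prop:sMVT} applied to $|f|^2$ gives
\begin{align*}
|f(z)|^2 \lesssim \Bf_\Omega(z,z) \int_{\Bb_\Omega(\zeta_m,2r)} |f|^2 \, d\mu.
\end{align*}
Combining Theorems~\ref{thm:good_charts} and~\ref{thm:Bergman_kernel_est} (part (2) of the latter controls the variation of $|\det \Phi_{\zeta_m}'|$ on a fixed relatively compact subball of $\Bb$), we also get that $\Bf_\Omega(\cdot,\cdot)$ is uniformly comparable on $\Bb_\Omega(\zeta_m,r)$, i.e.\ $\Bf_\Omega(z,z) \asymp \Bf_\Omega(\zeta_m,\zeta_m)$ for $z \in \Bb_\Omega(\zeta_m,r)$, with constants independent of $m$.

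Plugging the pointwise estimate on $|f|^2$ into $\int_{\Bb_\Omega(\zeta_m,r)} |\phi f|^2 \, d\mu$ yields
\begin{align*}
\int_{\Bb_\Omega(\zeta_m,r)} |\phi f|^2 \, d\mu \lesssim \left( \int_{\Bb_\Omega(\zeta_m,2r)} |f|^2 \, d\mu \right) \int_{\Bb_\Omega(\zeta_m,r)} |\phi(z)|^2 \Bf_\Omega(z,z) \, d\mu(z),
\end{align*}
and by Proposition~\ref{prop:volume_comp} the second factor is comparable to $\int_{\Bb_\Omega(\zeta_m,r)} |\phi|^2 \, dV_\Omega$, which is bounded by $M := \sup_{\zeta \in \Omega} \int_{\Bb_\Omega(\zeta,r)} |\phi|^2 \, dV_\Omega < +\infty$. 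Since $\mathds{1}_\Omega \leq \sum_m \mathds{1}_{\Bb_\Omega(\zeta_m,r)}$, summing and then using the bounded overlap property (part (3) of Corollary~\ref{cor:discretization} with $R=2r$) gives
\begin{align*}
\int_\Omega |\phi f|^2 \, d\mu \leq \sum_m \int_{\Bb_\Omega(\zeta_m,r)} |\phi f|^2 \, d\mu \lesssim M \sum_m \int_{\Bb_\Omega(\zeta_m,2r)} |f|^2 \, d\mu \lesssim M \norm{f}_2^2.
\end{align*}
This shows $\phi \cdot f \in L^2(\Omega)$ and $\norm{M_\phi f}_2 \lesssim M^{1/2} \norm{f}_2$, proving (b). The only nontrivial step is the uniform comparability of $\Bf_\Omega(\cdot,\cdot)$ on balls of fixed Bergman radius, but this is a direct consequence of the uniform estimates on the $\beta_\zeta$'s already established in Section~\ref{sec:estimates}; everything else is a packaging of standard covering and submean value arguments in the setting of bounded intrinsic geometry.
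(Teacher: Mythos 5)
Your proof follows the same route as the paper: discretize $\Omega$ with Corollary~\ref{cor:discretization}, use the submean value inequality of Proposition~\ref{prop:sMVT} on $|f|^2$, pass from $\Bf_\Omega(z,z)\,d\mu$ to $dV_\Omega$ via Proposition~\ref{prop:volume_comp}, and sum over the net using the bounded overlap. That chain of inequalities is correct and matches the paper's argument essentially line for line.

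One remark in the middle of your write-up, however, is false and should be cut. You assert that combining Theorems~\ref{thm:good_charts} and~\ref{thm:Bergman_kernel_est} yields $\Bf_\Omega(z,z)\asymp\Bf_\Omega(\zeta_m,\zeta_m)$ uniformly for $z\in\Bb_\Omega(\zeta_m,r)$. This uniform comparability of $\Bf_\Omega(\cdot,\cdot)$ on Bergman balls of a fixed radius is precisely condition (2) of Theorem~\ref{thm:char_self_bd_gradient}, which the paper shows is equivalent to $\log\Bf_\Omega(z,z)$ having self bounded gradient; and as noted after Corollary~\ref{cor:equivalence_of_averages}, there exist domains with bounded intrinsic geometry for which this fails. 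The point hidden in Theorem~\ref{thm:Bergman_kernel_est} is that it only controls $\beta_\zeta$ and its derivatives, not the ``size'' factor $|\det\Phi_\zeta'|$; controlling the latter along a ball is exactly what self bounded gradient buys you (see the equivalence of (1) and (5) in Theorem~\ref{thm:char_self_bd_gradient}). Fortunately you never invoke this claim: you pass from $\int_{\Bb_\Omega(\zeta_m,r)}|\phi|^2\Bf_\Omega(z,z)\,d\mu$ to $\int_{\Bb_\Omega(\zeta_m,r)}|\phi|^2\,dV_\Omega$ using only the pointwise comparison $\Bf_\Omega(z,z)\,d\mu\asymp dV_\Omega$ from Proposition~\ref{prop:volume_comp}, which is universally valid. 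So the proof as executed is fine; simply delete the false aside.
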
 

\begin{proof} Fix $f \in A^2(\Omega)$. By Corollary~\ref{cor:discretization} there exists a sequence $(\zeta_m)_{m \geq 1}$ of distinct points in $\Omega$ such that 
\begin{enumerate}
\item $\{\zeta_m : m \geq 1\}$ is $r$-separated with respect to the Bergman distance,
\item $\cup_m \Bb_\Omega(\zeta_m, r) = \Omega$, and 
 \item $L:=\sup_{z \in \Omega} \#\{ m : \zeta_m \in \Bb_\Omega(z, 2r) \} <+\infty$.
\end{enumerate}

Applying Proposition~\ref{prop:sMVT} to $\abs{f}^2$  yields: if $z \in \Bb_\Omega(\zeta_m, r)$, then
\begin{align*}
\abs{f(z)}^2 \lesssim \Bf_\Omega(z,z) \int_{\Bb_\Omega(z, r)} \abs{f}^2 d\mu \leq  \Bf_\Omega(z,z) \int_{\Bb_\Omega(\zeta_m, 2r)} \abs{f}^2 d\mu.
\end{align*}
So by Proposition~\ref{prop:volume_comp}
\begin{align*}
\int_{\Bb_\Omega(\zeta_m,r)} & \abs{\phi \cdot f}^2 d\mu \lesssim  \left(\int_{\Bb_\Omega(\zeta_m,r)} \abs{\phi}^2 \Bf_\Omega(z,z) d \mu\right) \left( \int_{\Bb_\Omega(\zeta_m, 2r)} \abs{f}^2 d\mu\right)  \\
& \lesssim \left( \int_{\Bb_\Omega(\zeta_m,r)} \abs{\phi}^2  dV_\Omega\right) \left(\int_{\Bb_\Omega(\zeta_m, 2r)} \abs{f}^2 d\mu\right) \\
& \lesssim \int_{\Bb_\Omega(\zeta_m, 2r)} \abs{f}^2 d\mu.
\end{align*}
Hence 
\begin{align*}
\int_\Omega \abs{\phi \cdot f}^2 d\mu \lesssim \sum_{m} \int_{\Bb_\Omega(\zeta_m,r)} \abs{\phi \cdot f}^2 d\mu \lesssim \sum_{m} \int_{\Bb_\Omega(\zeta_m,2r)} \abs{f}^2 d\mu \leq L \int_\Omega \abs{f}^2 d\mu.
\end{align*}

Since $f \in A^2(\Omega)$ was arbitrary, ${\rm dom}(M_\phi) = A^2(\Omega)$ and  $M_\phi : A^2(\Omega) \rightarrow L^2(\Omega)$ is bounded.

\end{proof} 

\begin{lemma} (a') $\Rightarrow$ (b'). \end{lemma}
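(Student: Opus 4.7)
The plan is to combine the bounded-multiplication argument of Lemma~\ref{lem:multiplication_a_implies_b} with the classical exhaustion trick: weak convergence in $A^2(\Omega)$ produces locally uniform convergence on compact sets, while the decay hypothesis (a') controls everything outside a sufficiently large compact subset of $\Omega$.

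First I would note that (a') implies (a), so that Lemma~\ref{lem:multiplication_a_implies_b} already gives $\dom(M_\phi)=A^2(\Omega)$ and boundedness of $M_\phi$. This is automatic: on any compact $K\subset\Omega$, Proposition~\ref{prop:volume_comp} yields $dV_\Omega\asymp\Bf_\Omega(z,z)\,d\mu$ with $\Bf_\Omega(z,z)$ bounded on $K$, so $\zeta\mapsto\int_{\Bb_\Omega(\zeta,r)}\abs{\phi}^2\,dV_\Omega$ is locally bounded on $\Omega$; together with its decay to $0$ at $\partial\Omega$ this makes it globally bounded.

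Next I would reduce compactness to showing that $\norm{\phi\cdot f_n}_2\to 0$ whenever $f_n\rightharpoonup 0$ weakly in $A^2(\Omega)$. Such a sequence is norm-bounded, say by $M$, and by the reproducing-kernel property it converges to $0$ locally uniformly on $\Omega$. Fix $\varepsilon>0$ and use (a') to pick a compact set $K\subset\Omega$ with $\int_{\Bb_\Omega(\zeta,r)}\abs{\phi}^2\,dV_\Omega<\varepsilon$ for every $\zeta\in\Omega\setminus K$. Apply Corollary~\ref{cor:discretization} to obtain $r$-separated points $(\zeta_m)$ whose $r$-balls cover $\Omega$; since the Bergman distance is complete and $K$ is compact, the set $F_\varepsilon:=\{m:\zeta_m\in K\}$ is finite and the union $K':=\bigcup_{m\in F_\varepsilon}\overline{\Bb_\Omega(\zeta_m,r)}$ is a compact subset of $\Omega$.

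The central step is to split
\[
\int_\Omega \abs{\phi\cdot f_n}^2\,d\mu \leq \sum_{m\in F_\varepsilon}\int_{\Bb_\Omega(\zeta_m,r)}\abs{\phi\cdot f_n}^2\,d\mu + \sum_{m\notin F_\varepsilon}\int_{\Bb_\Omega(\zeta_m,r)}\abs{\phi\cdot f_n}^2\,d\mu
\]
and estimate each piece. For the tail sum I would reuse the pointwise estimate from the proof of Lemma~\ref{lem:multiplication_a_implies_b}, which yields $\int_{\Bb_\Omega(\zeta_m,r)}\abs{\phi\cdot f_n}^2\,d\mu \lesssim \varepsilon\int_{\Bb_\Omega(\zeta_m,2r)}\abs{f_n}^2\,d\mu$ whenever $m\notin F_\varepsilon$, and bounded multiplicity of the cover collapses the sum to $\lesssim\varepsilon M^2$. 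The head sum involves only finitely many $m\in F_\varepsilon$, with integration taking place inside $K'$, where $\abs{f_n}\to 0$ uniformly, so this contribution is at most $\abs{F_\varepsilon}\cdot\norm{\phi}_2^2\cdot\sup_{K'}\abs{f_n}^2\to 0$. Letting $n\to\infty$ and then $\varepsilon\to 0$ yields $\norm{\phi\cdot f_n}_2\to 0$. The only mildly delicate point is arranging the discretization so that the ``interior'' index set is finite while the ``tail'' satisfies a uniform smallness estimate --- both are already packaged in Corollary~\ref{cor:discretization} --- so no serious obstacle remains beyond careful bookkeeping.
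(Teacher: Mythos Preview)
Your proposal is correct and follows essentially the same approach as the paper: discretize via Corollary~\ref{cor:discretization}, split the sum over the cover into a finite ``head'' near a compact set and a ``tail'' near the boundary, control the tail using the factored estimate from Lemma~\ref{lem:multiplication_a_implies_b} together with (a'), and kill the head via locally uniform convergence of the weakly null sequence. The only cosmetic differences are that you first record $(a')\Rightarrow(a)$ explicitly and bound the head term by $\abs{F_\varepsilon}\,\norm{\phi}_2^2\,\sup_{K'}\abs{f_n}^2$ rather than continuing with the factored estimate, but neither changes the substance of the argument.
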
 

\begin{proof} It is enough to fix a sequence $(f_n)_{n \geq 1}$ of unit vectors in $A^2(\Omega)$ which converges weakly to 0 and show that $M_\phi(f)$ converges strongly to $0$.  

As in the proof of Lemma~\ref{lem:multiplication_a_implies_b}, there exists a sequence $(\zeta_m)_{m \geq 1}$ of distinct points in $\Omega$ such that 
\begin{enumerate}
\item $\{\zeta_m : m \geq 1\}$ is $r$-separated with respect to the Bergman distance,
\item $\cup_m \Bb_\Omega(\zeta_m, r) = \Omega$, and 
 \item $L:=\sup_{z \in \Omega} \#\{ m : \zeta_m \in \Bb_\Omega(z, 2r) \} <+\infty$.
\end{enumerate}
Further, arguing as in Lemma~\ref{lem:multiplication_a_implies_b} we have
\begin{align*}
\int_{\Bb_\Omega(\zeta_m,r)} & \abs{\phi \cdot f_n}^2 d\mu \lesssim \left( \int_{\Bb_\Omega(\zeta_m,r)} \abs{\phi}^2  dV_\Omega\right) \left(\int_{\Bb_\Omega(\zeta_m, 2r)} \abs{f_n}^2 d\mu\right).
\end{align*}

Fix $\epsilon > 0$. Since 
\begin{align*}
\lim_{\zeta \rightarrow \partial \Omega}  \int_{\Bb_\Omega(\zeta,r)} \abs{\phi}^2  dV_\Omega  = 0,
\end{align*}
there exists $M > 0$ such that 
\begin{align*}
\int_{\Bb_\Omega(\zeta_m,r)} \abs{\phi}^2  dV_\Omega <  \epsilon
\end{align*}
for all $m > M$. Since $f_n \in A^2(\Omega)$ converges to 0 weakly, $f_n$ converges locally uniformly to 0. Hence
\begin{align*}
\lim_{n \rightarrow \infty} \sum_{m \leq M} \int_{ \Bb_\Omega(\zeta_m,2r)} \abs{f_n}^2 d\mu =0.
\end{align*}
Then
\begin{align*}
\limsup_{n \rightarrow \infty} & \int_\Omega \abs{\phi \cdot f_n}^2 d\mu \leq \limsup_{n \rightarrow \infty}\sum_{m} \int_{\Bb_\Omega(\zeta_m,r)}  \abs{\phi \cdot f_n}^2 d\mu \\
& \lesssim \limsup_{n \rightarrow \infty}\sum_{m}\left( \int_{\Bb_\Omega(\zeta_m,r)} \abs{\phi}^2  dV_\Omega\right) \left(\int_{\Bb_\Omega(\zeta_m, 2r)} \abs{f_n}^2 d\mu\right) \\
& = \limsup_{n \rightarrow \infty}\sum_{m>M}\left( \int_{\Bb_\Omega(\zeta_m,r)} \abs{\phi}^2  dV_\Omega\right) \left(\int_{\Bb_\Omega(\zeta_m, 2r)} \abs{f_n}^2 d\mu\right) \\
& \leq \limsup_{n \rightarrow \infty}\sum_{m>M} \epsilon \left(\int_{\Bb_\Omega(\zeta_m, 2r)} \abs{f_n}^2 d\mu\right) \leq  \limsup_{n \rightarrow\infty}\, \epsilon L \int_\Omega \abs{f_n}^2 d \mu = \epsilon L. 
\end{align*}
Since $\epsilon > 0$ was arbitrary, $M_\phi(f_n)=\phi \cdot f_n$ converges strongly to 0. 
\end{proof}

\section{Smooth symbols}\label{sec:smooth_symbols}

Using Proposition~\ref{prop:multiplication_operators} we establish a sufficient condition of a $\Cc^1$-smooth symbol to have compact (respectively bounded) Hankel operator. 

\begin{proposition}\label{prop:smooth_symbols} Suppose  $\Omega \subset \Cb^d$ is a bounded domain with bounded intrinsic geometry and $\phi \in \Cc^1(\Omega) \cap \Sc(\Omega)$. 
\begin{enumerate}
\item If there exists $r > 0$ such that
\begin{align*}
\sup_{\zeta \in \Omega} \int_{\Bb_\Omega(\zeta,r)} \norm{\bar{\partial} \phi}_{g_\Omega}^2 dV_\Omega <+\infty,
\end{align*}
then $H_\phi$ extends to a bounded operator on $A^2(\Omega)$. 
\item If there exists $r > 0$ such that
\begin{align*}
\lim_{\zeta \rightarrow \partial \Omega} \int_{\Bb_\Omega(\zeta,r)}  \norm{\bar{\partial} \phi}_{g_\Omega}^2  dV_\Omega= 0,
\end{align*}
then $H_\phi$ extends to a compact operator on $A^2(\Omega)$. 
\end{enumerate}
\end{proposition}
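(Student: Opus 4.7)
The plan is to reduce both statements to Proposition~\ref{prop:multiplication_operators} applied to the non-negative function $\psi := \norm{\bar\partial \phi}_{g_\Omega}$, via the H\"ormander-type $L^2$ estimate from Corollary~\ref{cor:existence_big-domains}. Observe that the integrand appearing in the two hypotheses is exactly $\psi^2$, so the assumptions in (1) and (2) are precisely conditions (a) and (a') of Proposition~\ref{prop:multiplication_operators} applied to $\psi$. The strategy is therefore to establish the comparison
\begin{equation*}
\norm{H_\phi f}_2 \leq C^{1/2}\, \norm{M_\psi f}_2 \qquad \text{for all } f \in \dom(H_\phi),
\end{equation*}
where $C$ is the constant in Corollary~\ref{cor:existence_big-domains}, and then invoke Proposition~\ref{prop:multiplication_operators} to conclude.

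For the key comparison, for $f \in \dom(H_\phi)$ the product $\phi f$ lies in $L^2(\Omega)$, so $H_\phi f = \phi f - P_\Omega(\phi f)$ belongs to $L^2(\Omega) \ominus A^2(\Omega)$ and satisfies
\begin{equation*}
\bar\partial (H_\phi f) = \bar\partial(\phi f) = (\bar\partial \phi)\cdot f
\end{equation*}
distributionally, since $f$ is holomorphic. Any other $L^2$ solution $u$ of $\bar\partial u = (\bar\partial \phi) f$ differs from $H_\phi f$ by an element of $A^2(\Omega)$, so by orthogonality $\norm{H_\phi f}_2 \leq \norm{u}_2$. Because $\bar\partial((\bar\partial \phi) f) = 0$, Corollary~\ref{cor:existence_big-domains} with $\lambda_2 \equiv 0$ produces such a $u$ satisfying
\begin{equation*}
\int_\Omega \abs{u}^2 d\mu \leq C \int_\Omega \norm{(\bar\partial \phi) f}_{g_\Omega}^2 d\mu = C\int_\Omega \abs{f}^2 \psi^2 d\mu = C\norm{M_\psi f}_2^2,
\end{equation*}
and the stated inequality follows. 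Finiteness of the right-hand side is not a real obstruction: in case (2), condition (a') implies (a) (the integral $\int_{\Bb_\Omega(\zeta,r)} \psi^2\, dV_\Omega$ is continuous in $\zeta$ and vanishes at $\partial\Omega$, hence is bounded), so in both cases Proposition~\ref{prop:multiplication_operators}(1) ensures $M_\psi f \in L^2(\Omega)$.

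Case (1) then follows immediately: $M_\psi$ is bounded on $A^2(\Omega)$, so $H_\phi$ is uniformly bounded on the dense subspace $\dom(H_\phi)$ and extends to a bounded operator $\wt{H}_\phi$ on $A^2(\Omega)$. For case (2), Proposition~\ref{prop:multiplication_operators}(2) gives that $M_\psi$ is compact. Given any weakly null sequence $(f_n)$ in $A^2(\Omega)$, I would approximate each $f_n$ by some $g_n \in \dom(H_\phi)$ with $\norm{f_n-g_n}_2 < 1/n$ (so $g_n$ is also weakly null), apply the key estimate and compactness of $M_\psi$ to obtain $\norm{H_\phi g_n}_2 \to 0$, and then use boundedness of $\wt{H}_\phi$ to conclude $\norm{\wt{H}_\phi f_n}_2 \to 0$. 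The main subtlety I expect is this final density step, since one must pass from an estimate that holds only on $\dom(H_\phi)$ to a compactness statement on all of $A^2(\Omega)$; everything else is a direct application of the tools already developed in Sections~\ref{sec:DBIG} and~\ref{sec:multiplication_operators}.
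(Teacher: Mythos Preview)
Your proposal is correct and follows exactly the paper's approach: the paper also sets $M(f) = \norm{\bar\partial\phi}_{g_\Omega}\cdot f$, establishes the same inequality $\norm{H_\phi(f)}_2 \leq \sqrt{C}\,\norm{M(f)}_2$ via Corollary~\ref{cor:existence_big-domains} with $\lambda_2\equiv 0$, and then invokes Proposition~\ref{prop:multiplication_operators}. The density step you flag as a subtlety is simply elided in the paper, which concludes in one line from the inequality; your extra argument is fine but not needed, since once $M_\psi$ is bounded the inequality extends by continuity to all of $A^2(\Omega)$ and compactness of $M_\psi$ then forces compactness of the extension.
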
 

\begin{proof} Let $M : A^2(\Omega) \rightarrow L^2(\Omega)$ be the multiplication operator 
\begin{align*}
M(f) =  \norm{\bar{\partial} \phi}_{g_\Omega} \cdot f.
\end{align*}

 Fix $f \in {\rm dom}(H_\phi)$. By definition 
\begin{align*}
\norm{H_\phi(f)}_2 = \min_{h \in A^2(\Omega)} \norm{f\phi-h}_2.
\end{align*}
Further, by Corollary~\ref{cor:existence_big-domains} there exists $C > 0$, independent of $f$, and some $u \in L^{2}(\Omega)$ with $\bar{\partial} u = f\bar{\partial} \phi$ and 
\begin{align*}
\int_\Omega \abs{u}^2 d\mu \leq  C \int_\Omega \abs{f}^2\norm{ \bar{\partial} \phi}^2_{g_\Omega} d\mu  =   C\norm{M(f)}_2^2
\end{align*}
Then $h : = f \phi - u \in A^2(\Omega)$ and so 
\begin{align*}
\norm{H_\phi(f)}_2  \leq  \norm{f \phi-h}_2 =\norm{u}_2 \leq \sqrt{C} \norm{M(f)}_2. 
\end{align*}

So Proposition~\ref{prop:multiplication_operators} immediately implies the result.

\end{proof}

\section{Proof of Theorem~\ref{thm:main}}\label{sec:pf_of_thm_main}

We are now ready to prove Theorem~\ref{thm:main} which we restate here. 

\begin{theorem}Suppose $\Omega \subset \Cb^d$ is a bounded domain with bounded intrinsic geometry and $\phi \in \Sc(\Omega)$. Then the following are equivalent: 
\begin{enumerate}
\item $H_\phi$ extends to a compact operator 
\item for some $r > 0$ 
$$\lim_{\zeta \rightarrow \partial \Omega} \inf\left\{ \int_{\Bb_\Omega(\zeta,r)} \abs{\phi-h}^2  dV_\Omega  : h \in {\rm Hol}\left(\Bb_\Omega(\zeta,r)\right)\right\} = 0.$$ 
\end{enumerate}
\end{theorem}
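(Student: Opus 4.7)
The plan is to test compactness against the normalized reproducing kernels $s_\zeta = \Bf_\Omega(\cdot,\zeta)/\sqrt{\Bf_\Omega(\zeta,\zeta)} \in A^2(\Omega)$, unit vectors that converge weakly to $0$ as $\zeta \to \partial\Omega$ by Proposition~\ref{prop:weak_convergence}. Compactness of the bounded extension $\bar H_\phi$ then forces $\norm{\bar H_\phi(s_\zeta)}_2 \to 0$. Combining Propositions~\ref{prop:off_diagonal_estimate} and~\ref{prop:volume_comp} I fix $r_0 > 0$ so that $\abs{s_\zeta}^2 d\mu \asymp dV_\Omega$ and $s_\zeta$ is nonvanishing on $\Bb_\Omega(\zeta, r_0)$. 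Since $\phi \in \Sc(\Omega)$, I approximate $s_\zeta$ in $A^2(\Omega)$ by $f_n \in \dom(H_\phi)$; the holomorphic functions $P_\Omega(\phi f_n) = \phi f_n - H_\phi(f_n)$ converge in $L^2_{\loc}(\Bb_\Omega(\zeta, r_0))$ to the restriction of $\phi s_\zeta - \bar H_\phi(s_\zeta)$, which is therefore holomorphic on that ball. Dividing by the nonvanishing $s_\zeta$ produces $h_\zeta \in \Hol(\Bb_\Omega(\zeta, r_0))$ satisfying $\phi - h_\zeta = \bar H_\phi(s_\zeta)/s_\zeta$ pointwise, whence
$$\int_{\Bb_\Omega(\zeta, r_0)} \abs{\phi - h_\zeta}^2 \, dV_\Omega \asymp \int_{\Bb_\Omega(\zeta, r_0)} \abs{\bar H_\phi(s_\zeta)}^2 \, d\mu \leq \norm{\bar H_\phi(s_\zeta)}_2^2 \longrightarrow 0.$$

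\textbf{Plan for $(2)\Rightarrow(1)$.} Following the strategy outlined in the introduction, I will decompose $\phi = \phi_1 + \phi_2$ with $\phi_1 \in \Cc^\infty(\Omega)$ and $\phi_2$ locally small in $L^2(dV_\Omega)$, so that $H_{\phi_1}$ is compact by Proposition~\ref{prop:smooth_symbols}(2) and $M_{\phi_2}$ (hence $H_{\phi_2} = (\id-P_\Omega)M_{\phi_2}$) is compact by Proposition~\ref{prop:multiplication_operators}. Fix an $r > 0$ for which (2) holds, then pick the discretization $(\zeta_m)$ of Corollary~\ref{cor:discretization} at a suitably smaller scale $r' \ll r$. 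Pulling back a fixed Euclidean bump by the uniform charts $\Phi_{\zeta_m}:\Bb \to \Omega$ of Theorem~\ref{thm:good_charts} and normalizing yields a partition of unity $(\chi_m)$ subordinate to $\{\Bb_\Omega(\zeta_m, r')\}$ with $\norm{\bar\partial \chi_m}_{g_\Omega}$ bounded uniformly in $m$; bounded overlap is furnished by Corollary~\ref{cor:discretization}(3). For each $m$ choose $h_m \in \Hol(\Bb_\Omega(\zeta_m, r))$ whose $L^2(dV_\Omega)$-error nearly attains the local infimum in (2), and set $\phi_1 := \sum_m \chi_m h_m$, $\phi_2 := \phi - \phi_1 = \sum_m \chi_m(\phi - h_m)$.

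Since $\sum_m \bar\partial \chi_m \equiv 0$, one has $\bar\partial \phi_1 = \sum_m (\bar\partial\chi_m) h_m = -\sum_m (\bar\partial \chi_m)(\phi - h_m)$. Using bounded overlap together with the uniform bound on $\norm{\bar\partial \chi_m}_{g_\Omega}$ gives
$$\int_{\Bb_\Omega(\zeta, r)} \left( \abs{\phi_2}^2 + \norm{\bar\partial \phi_1}_{g_\Omega}^2 \right) dV_\Omega \lesssim \sum_{m:\, \zeta_m \in \Bb_\Omega(\zeta, Cr)} \int_{\Bb_\Omega(\zeta_m, r)} \abs{\phi - h_m}^2 \, dV_\Omega,$$
which tends to $0$ as $\zeta \to \partial\Omega$ by hypothesis. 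Applying Propositions~\ref{prop:smooth_symbols}(2) and~\ref{prop:multiplication_operators} yields that both $H_{\phi_1}$ and $H_{\phi_2}$ extend to compact operators. Since $\dom(H_{\phi_2}) = A^2(\Omega)$, the equality $\dom(H_\phi) = \dom(H_{\phi_1})$ holds and is dense, so $H_\phi = H_{\phi_1} + H_{\phi_2}$ on a dense set and hence extends compactly.

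\textbf{Main obstacle.} The principal technical hurdle is the partition-of-unity construction in $(2)\Rightarrow(1)$: producing $\chi_m$ with $g_\Omega$-uniform derivative bounds, calibrating the radii so that the supports of the $\chi_m$ sit inside the domains on which the $h_m$ are defined, and verifying that the infinite sum $\phi_1 = \sum_m \chi_m h_m$ assembles into a $\Cc^\infty$ function whose local $\bar\partial$-norm can be estimated ball by ball via bounded overlap. The uniform charts (Theorem~\ref{thm:good_charts}) and the discretization (Corollary~\ref{cor:discretization}) furnish the correct local model, but careful bookkeeping is needed. A secondary subtlety in $(1)\Rightarrow(2)$ is justifying the identity $h_\zeta = (\phi s_\zeta - \bar H_\phi(s_\zeta))/s_\zeta$ when $\phi s_\zeta \notin L^2(\Omega)$; this is handled by the density of $\dom(H_\phi)$.
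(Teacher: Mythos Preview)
Your proposal is correct and follows essentially the same architecture as the paper's proof: test against normalized kernels for $(1)\Rightarrow(2)$, and for $(2)\Rightarrow(1)$ build a partition of unity via the charts of Theorem~\ref{thm:good_charts} over the discretization of Corollary~\ref{cor:discretization}, then split $\phi=\phi_1+\phi_2$ and invoke Propositions~\ref{prop:multiplication_operators} and~\ref{prop:smooth_symbols}.

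Two small differences are worth flagging. In $(1)\Rightarrow(2)$ the paper does not pass to the limit: it fixes a single approximant $f_m\in\dom(H_\phi)$ of $s_{\zeta_m}$ with $\abs{f_m}\asymp\abs{s_{\zeta_m}}$ on the ball and takes $h_m=f_m^{-1}P_\Omega(\phi f_m)$ directly; your $L^2_{\loc}$-limit argument identifying $\phi s_\zeta-\bar H_\phi(s_\zeta)$ as holomorphic is an equally valid alternative. In $(2)\Rightarrow(1)$ the paper rewrites $\bar\partial\phi_1=\sum_{j\geq 2}(h_{m_j}-h_{m_1})\,\bar\partial\hat\chi_{m_j}$ and then needs a separate lemma bounding $\int\abs{h_n-h_m}^2\,dV_\Omega$ on overlaps by $(\epsilon_n+\epsilon_m)^2$; your identity $\bar\partial\phi_1=-\sum_m(\bar\partial\chi_m)(\phi-h_m)$ sidesteps that comparison lemma entirely and is a genuine, if minor, simplification.
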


For the rest of the section suppose that  $\Omega \subset \Cb^d$ is a bounded domain with bounded intrinsic geometry and $\phi \in \Sc(\Omega)$. 

\subsection{(1) implies (2)} Suppose that $H_\phi$ extends to a compact operator $\hat{H}_\phi$ on $A^2(\Omega)$. Fix a sequence $(\zeta_m)_{m \geq 1}$ converging to $\partial \Omega$. By Proposition~\ref{prop:weak_convergence} the functions $s_{\zeta_m} \in L^2(\Omega)$ converge weakly to zero. Hence 
\begin{align*}
\lim_{m \rightarrow \infty} \norm{\hat{H}_\phi(s_{\zeta_m})}_2 = 0.
\end{align*} 

By Proposition~\ref{prop:off_diagonal_estimate} we can fix $r > 0$ and $C > 1$ such that 
\begin{align}
\label{eqn:off-diag-in-pf-thm-main}
\frac{1}{C}\abs{\Bf_\Omega(z,\zeta)}^2 \leq \Bf_\Omega(z,z)\Bf_\Omega(\zeta,\zeta) \leq C \abs{\Bf_\Omega(z,\zeta)}^2
\end{align}
for all $\zeta \in \Omega$ and $z \in \Bb_\Omega(\zeta,r)$. By increasing $C >1$ and using Propositions~\ref{prop:volume_comp} and~\ref{prop:off_diagonal_estimate} we may also assume that 
\begin{align}
\label{eqn:vol-est-in-pf-thm-main}
\frac{1}{C} dV_\Omega \leq \abs{s_{\zeta}(z)}^2 d\mu \leq C dV_\Omega
\end{align}
on each $\Bb_\Omega(\zeta,r)$. Notice that this implies that each $s_{\zeta_m}$ is non-vanishing on $\overline{\Bb_\Omega(\zeta_m,r)}$. 

By assumption, ${\rm dom}(H_\phi)$ is dense in $A^2(\Omega)$. So for each $m$ we can find a sequence $(f_{m,k})_{k \geq 1}$ in  ${\rm dom}(H_\phi)$ converging to $s_{\zeta_m}$ in $A^2(\Omega)$. Then $(f_{m,k})_{k \geq 1}$ converges uniformly to $s_{\zeta_m}$ on $\overline{\Bb_\Omega(\zeta_m,r)}$. Since  $s_{\zeta_m}$ is non-vanishing on $\overline{\Bb_\Omega(\zeta_m,r)}$, we can then pick $k_m$ such that 
\begin{align*}
\frac{1}{C} \abs{f_{m,k_m}} \leq \abs{s_{\zeta_m}} \leq C \abs{f_{m,k_m}}
\end{align*}
on $\Bb_\Omega(\zeta_m, r)$. By possibly increasing $k_m$ further we may also assume that 
\begin{align}
\label{eqn:norm-to-zero-in-pf-thm-main}
\lim_{m \rightarrow \infty} \norm{H_\phi(f_{m,k_m})}_2 = 0.
\end{align}

Let $f_m: = f_{m,k_m}$. Since $f_m$ is non-vanishing on $\Bb_\Omega(\zeta_m, r)$, the function
\begin{align*}
h_m := f_m^{-1} P_\Omega(\phi f_m)
\end{align*}
is in $ {\rm Hol} \left( \Bb_\Omega(\zeta_m,r) \right)$. Then by Equations~\eqref{eqn:vol-est-in-pf-thm-main} and~\eqref{eqn:norm-to-zero-in-pf-thm-main}
\begin{align*}
\lim_{m \rightarrow \infty} & \int_{\Bb_\Omega(\zeta_m, r)} \abs{ \phi - h_m }^2  dV_\Omega = \lim_{m \rightarrow \infty} \int_{\Bb_\Omega(\zeta_n, r)} \abs{ \phi f_m - P_\Omega(\phi f_m)}^2\abs{f_m}^{-2} dV_\Omega  \\
& \leq \lim_{m \rightarrow \infty}C^2 \int_{\Bb_\Omega(\zeta_m, r)} \abs{ H_\phi(f_m)}^2 d\mu \leq \lim_{m \rightarrow \infty} C^2\norm{H_\phi(f_m)}^2_2 = 0. 
\end{align*}

Since $(\zeta_m)_{m \geq 1}$ was an arbitrary sequence converging to $\partial \Omega$, this completes the proof of this direction.

\subsection{(2) implies (1)} Suppose that there exists $r > 0$ such that
$$\lim_{\zeta \rightarrow \partial \Omega} \inf\left\{ \int_{\Bb_\Omega(\zeta,r)} \abs{\phi-h}^2  dV_\Omega  : h \in {\rm Hol}\left(\Bb_\Omega(\zeta,r)\right)\right\} = 0.$$ 
Without loss of generality we can assume $r < 1$. 

Let $C_1 > 1$ and $\{ \Phi_\zeta : \zeta \in \Omega\}$ satisfy Theorem~\ref{thm:good_charts}. Then fix $r_1 < \frac{r}{C_1^2}$. By Corollary~\ref{cor:discretization} there exists a sequence $(\zeta_m)_{m \geq 1}$ of distinct points in $\Omega$ such that 
\begin{enumerate}
\item $\{\zeta_m : m \geq 1\}$ is $r_1$-separated with respect to the Bergman distance,
\item $\cup_m \Bb_\Omega(\zeta_m, r_1) = \Omega$, and 
 \item $L:=\sup_{z \in \Omega} \#\{ m : \zeta_m \in \Bb_\Omega(z, 3r/2) \} <+\infty$.
\end{enumerate}

Since the Bergman metric is a complete Riemannian metric and hence proper, we must have $\zeta_m \rightarrow \partial \Omega$. Then for each $m \geq 1$, there is some $h_m \in  {\rm Hol} \left( \Bb_\Omega(\zeta_m,r)\right)$ such that if 
\begin{align*}
\epsilon_m: =\left(\int_{\Bb_\Omega(\zeta_m, r)} \abs{ \phi - h_m }^2  dV_\Omega\right)^{1/2}, 
\end{align*}
then $\lim_{m \rightarrow \infty} \epsilon_m = 0$. 

Then fix a compactly supported smooth function $\chi : \Bb \rightarrow [0,1]$ such that $\chi \equiv 1$ on $C_1r_1 \Bb$ and ${\rm supp}(\chi) \subset \frac{r}{C_1} \Bb$. Then define $\chi_m := \chi \circ \Phi_{\zeta_m}^{-1}$. Notice that Theorem~\ref{thm:good_charts} implies that
\begin{align*}
\Bb_\Omega(\zeta_m, r_1) \subset \Phi_{\zeta_m}\left( C_1 r_1 \Bb\right) \subset  \chi_m^{-1}(1)  
\end{align*}
and 
\begin{align*}
 {\rm supp}(\chi_m) \subset \Phi_{\zeta_m}\left(\frac{r}{C_1} \Bb\right) \subset  \Bb_\Omega(\zeta_m, r).
\end{align*}
Further, by Theorem~\ref{thm:good_charts}
\begin{align*}
\norm{ \bar{\partial} \chi_m}_{g_\Omega}=\norm{ \bar{\partial} \chi}_{\Phi_\zeta^*g_\Omega} \leq C_1 \norm{ \bar{\partial} \chi}_{2} \lesssim 1. 
\end{align*}
Next, let $\hat{\chi}_m :=  \frac{1}{\sum_n \chi_n}\chi_m$. Then
\begin{align}
\label{eq:gradient_estimate}
\norm{ \bar{\partial} \hat{\chi}_m}_{g_\Omega} = \norm{ \frac{1}{\sum_n \chi_n}\bar{\partial}\chi_m - \frac{\chi_m}{(\sum_n \chi_n)^2} \sum_n \bar{\partial} \chi_n}_{g_\Omega}  \leq (L+1) \sup_{n \geq 1} \norm{\bar{\partial} \chi_n}_{g_\Omega} \lesssim 1. 
\end{align}

Finally, let 
\begin{align*}
\phi_1 := \sum_m \hat{\chi}_m \cdot h_m
\end{align*}
and 
\begin{align*}
\phi_2 = \phi-\phi_1 =  \sum_m \hat{\chi}_m \cdot (\phi-h_m).
\end{align*} 

\begin{lemma} $\displaystyle \lim_{\zeta \rightarrow \partial \Omega} \int_{\Bb_\Omega(\zeta,r/2)} \abs{\phi_2}^2 dV_\Omega = 0$. In particular, 
\begin{enumerate}
\item ${\rm dom}(M_{\phi_2}) = A^2(\Omega)$ and $M_{\phi_2} : A^2(\Omega) \rightarrow L^2(\Omega)$ is a compact operator,
\item ${\rm dom}(H_{\phi_2}) = A^2(\Omega)$ and $H_{\phi_2} : A^2(\Omega) \rightarrow L^2(\Omega)$ is a compact operator,
\item ${\rm dom}(H_{\phi_1}) = {\rm dom}(H_\phi)$. 
\end{enumerate}
 \end{lemma}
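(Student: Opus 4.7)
The plan is to first establish the stated integral limit, and then read off the three numbered consequences from it using results already in hand.

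For the main limit, I would begin by observing that $\{\hat\chi_m\}$ is a partition of unity subordinate to $\{\Bb_\Omega(\zeta_m,r)\}$. Indeed, by Theorem~\ref{thm:good_charts} (and the choice $r_1 < r/C_1^2 < 1/C_1^2$) one has $\Bb_\Omega(\zeta_m, r_1) \subset \Phi_{\zeta_m}(C_1 r_1 \Bb)$, so $\chi_m \equiv 1$ on $\Bb_\Omega(\zeta_m, r_1)$. Combined with covering property~(2) of the discretization this gives $\sum_n \chi_n \geq 1$ on $\Omega$, hence $\sum_m \hat\chi_m \equiv 1$, and $\phi_2 = \sum_m \hat\chi_m (\phi - h_m)$. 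Since $\supp \hat\chi_m \subset \Bb_\Omega(\zeta_m, r)$, property~(3) guarantees that at each point $z$ at most $L$ of the summands are nonzero, so a Cauchy--Schwarz estimate yields the pointwise bound
$$|\phi_2(z)|^2 \leq L \sum_m \hat\chi_m(z)^2 \, |\phi(z) - h_m(z)|^2.$$

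Integrating this bound over $\Bb_\Omega(\zeta, r/2)$ and discarding the indices $m$ whose support does not meet the ball (equivalently, those with $\zeta_m \notin \Bb_\Omega(\zeta,3r/2)$) gives, since $\hat\chi_m \leq 1$,
$$\int_{\Bb_\Omega(\zeta, r/2)} |\phi_2|^2 \, dV_\Omega \;\leq\; L \!\!\!\sum_{m : \zeta_m \in \Bb_\Omega(\zeta, 3r/2)} \!\!\int_{\Bb_\Omega(\zeta_m, r)} |\phi - h_m|^2 \, dV_\Omega \;=\; L \!\!\!\sum_{m : \zeta_m \in \Bb_\Omega(\zeta, 3r/2)} \!\!\epsilon_m^2.$$
Property~(3) bounds the number of terms in the sum by $L$. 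Since the Bergman metric is proper (Hopf--Rinow), the compact ball $\overline{\Bb_\Omega(\zeta,3r/2)}$ leaves every compact subset of $\Omega$ as $\zeta \to \partial\Omega$, so every index $m$ appearing in the sum must tend to infinity, whence $\epsilon_m \to 0$ by construction. This proves the limit.

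The three numbered conclusions then follow formally. Conclusion~(1) is immediate from Proposition~\ref{prop:multiplication_operators} applied with radius $r/2$, since we have just verified hypothesis~(a'). Conclusion~(2) follows by writing $H_{\phi_2} = (\id - P_\Omega) \circ M_{\phi_2}$ and using that the Bergman projection is bounded, so a compact operator composed with a bounded one is compact. For conclusion~(3), part~(1) says $\phi_2 \cdot f \in L^2(\Omega)$ for every $f \in A^2(\Omega)$, so from $\phi = \phi_1 + \phi_2$ we conclude that $\phi \cdot f \in L^2(\Omega)$ if and only if $\phi_1 \cdot f \in L^2(\Omega)$, giving $\dom(H_\phi) = \dom(H_{\phi_1})$.

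The argument is essentially bookkeeping, and the main point of care is aligning the various radii ($r_1$, $r/2$, $r$, $3r/2$, and the threshold $1/C_1^2$ from Theorem~\ref{thm:good_charts}) so that the partition-of-unity setup, the supports of $\hat\chi_m$, the local finiteness constant $L$, and the intersection condition $\zeta_m \in \Bb_\Omega(\zeta,3r/2)$ all line up. The only genuinely conceptual input is the fact that indices $m$ with $\zeta_m$ Bergman-close to $\zeta$ must tend to infinity as $\zeta \to \partial\Omega$, which rests on completeness (hence properness) of the Bergman metric.
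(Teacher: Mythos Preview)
Your proof is correct and follows essentially the same approach as the paper: bound the local $L^2$-norm of $\phi_2$ by a sum of at most $L$ terms $\epsilon_m^2$ over the indices $m$ with $\zeta_m$ Bergman-close to $\zeta$, then use properness to conclude those indices tend to infinity; the numbered consequences are derived identically. The only cosmetic difference is that you use a pointwise Cauchy--Schwarz estimate where the paper uses the $L^2$ triangle inequality directly.
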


\begin{proof} For the main assertion, it is enough to show that 
\begin{align*}
 \int_{\Bb_\Omega(\zeta,r/2)} \abs{\phi_2}^2 dV_\Omega\lesssim \max\{ \epsilon_m^2 : \zeta \in {\rm supp}(\chi_m)\}.
\end{align*}
Fix $\zeta \in \Omega$ and let 
\begin{align*}
\{ m_1,\dots, m_k\} &= \left\{ m :  {\rm supp}(\chi_m) \cap \Bb_{\Omega}(\zeta, r/2) \neq \emptyset \right\} \\
& \subset \{ m : \zeta_m \in \Bb_\Omega(\zeta, 3r/2)\}.
\end{align*}
Notice that $k \leq L$ and 
\begin{align*}
& \left(\int_{\Bb_\Omega(\zeta,r/2)} \abs{\phi_2}^2 dV_\Omega\right)^{1/2} = \left(\int_{\Bb_\Omega(\zeta,r/2)} \abs{\sum_{j=1}^k \hat{\chi}_{m_j}(\phi- h_{m_j})}^2 dV_\Omega\right)^{1/2} \\
& \quad \leq \sum_{j=1}^k\left( \int_{\Bb_\Omega(\zeta_{m_j},r)} \abs{\phi- h_{m_j}}^2 dV_\Omega\right)^{1/2} \leq L \max\{ \epsilon_m : \zeta \in {\rm supp}(\chi_m)\}.
\end{align*}

Next we prove the ``in particular'' assertions. Proposition~\ref{prop:multiplication_operators} immediately implies (1). Since $H_{\phi_2} = (\id - P_\Omega) \circ M_{\phi_2}$ and $\id -P_\Omega$ is a bounded operator, we see that (1) implies (2). Finally, since ${\rm dom}(M_{\phi_2})=A^2(\Omega)$, we see that  
$$
{\rm dom}(H_{\phi_1}) ={\rm dom}(M_{\phi_1}) ={\rm dom}(M_{\phi}) = {\rm dom}(H_\phi).
$$
\end{proof} 

Fix $r_2 <\frac{r}{2}$ sufficiently small such that: if $w \in {\rm supp}(\chi)$, then $\Bb(w, C_1 r_2) \subset \frac{r}{C_1}\Bb$. 

\begin{lemma}\label{lem:L2_diff_upper} If $\zeta \in {\rm supp}(\chi_n) \cap {\rm supp}(\chi_m)$, then 
\begin{align*}
\left(\int_{\Bb_\Omega(\zeta, r_2)}  \abs{h_n-h_m}^2 dV_\Omega\right)^{1/2} \leq \epsilon_n + \epsilon_m. 
\end{align*}
\end{lemma}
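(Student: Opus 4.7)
The plan is to apply the triangle inequality in $L^2(\Bb_\Omega(\zeta, r_2), dV_\Omega)$ to the decomposition $h_n - h_m = (h_n-\phi) + (\phi-h_m)$, which yields
$$\norm{h_n - h_m}_{L^2(\Bb_\Omega(\zeta,r_2), dV_\Omega)} \leq \norm{\phi - h_n}_{L^2(\Bb_\Omega(\zeta,r_2), dV_\Omega)} + \norm{\phi - h_m}_{L^2(\Bb_\Omega(\zeta,r_2), dV_\Omega)}.$$
Provided that $\Bb_\Omega(\zeta, r_2) \subset \Bb_\Omega(\zeta_n, r) \cap \Bb_\Omega(\zeta_m, r)$, the right-hand side is bounded by $\epsilon_n + \epsilon_m$, since the nonnegative integrand $\abs{\phi-h_k}^2$ over the small ball is no larger than its integral over the full ball $\Bb_\Omega(\zeta_k, r)$, which equals $\epsilon_k^2$ by definition. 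So the real content of the lemma is this two-sided inclusion, which must be extracted from the choice of $r_2$ made immediately before the statement.

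To verify $\Bb_\Omega(\zeta, r_2) \subset \Bb_\Omega(\zeta_n, r)$ (the inclusion for $m$ being symmetric), I would work inside the chart $\Phi_{\zeta_n}\colon \Bb \to \Omega$ from Theorem~\ref{thm:good_charts}. Since $\zeta \in \supp(\chi_n)$, write $\zeta = \Phi_{\zeta_n}(w_0)$ with $w_0 \in \supp(\chi)$. The defining property of $r_2$ gives $\Bb(w_0, C_1 r_2) \subset \tfrac{r}{C_1}\Bb$, and the remark after Theorem~\ref{thm:good_charts} gives $\Phi_{\zeta_n}(\tfrac{r}{C_1}\Bb) \subset \Bb_\Omega(\zeta_n, r)$. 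It therefore suffices to prove the sharper inclusion $\Bb_\Omega(\zeta, r_2) \subset \Phi_{\zeta_n}(\Bb(w_0, C_1 r_2))$.

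For that last inclusion I would run a standard open/closed argument. The subset $S := \Bb_\Omega(\zeta, r_2) \cap \Phi_{\zeta_n}(\Bb(w_0, C_1 r_2))$ contains $\zeta$ and is manifestly open in $\Bb_\Omega(\zeta, r_2)$. To see it is also closed there, take $z_k \to z \in \Bb_\Omega(\zeta, r_2)$ with $z_k = \Phi_{\zeta_n}(w_k)$ and $w_k \in \Bb(w_0, C_1 r_2)$. The sequence $(w_k)$ is bounded, so after a subsequence $w_{k_j} \to w_* \in \overline{\Bb(w_0, C_1 r_2)} \subset \tfrac{r}{C_1}\overline{\Bb} \subset \Bb$, and continuity forces $\Phi_{\zeta_n}(w_*) = z$. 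Then the lower distance bound in Equation~\eqref{eqn:dist_comp} gives
$$\norm{w_* - w_0}_2 \leq C_1 \dist_\Omega\bigl(\Phi_{\zeta_n}(w_*), \Phi_{\zeta_n}(w_0)\bigr) = C_1 \dist_\Omega(z,\zeta) < C_1 r_2,$$
the strict inequality using $z \in \Bb_\Omega(\zeta, r_2)$, so $z = \Phi_{\zeta_n}(w_*) \in S$. Since $\Bb_\Omega(\zeta, r_2)$ is connected as a Riemannian metric ball in a complete manifold, $S$ fills it, and the inclusion follows. The connectedness/lifting step is the only non-routine point; the rest is just the triangle inequality and the definition of $\epsilon_n, \epsilon_m$.
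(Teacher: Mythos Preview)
Your proposal is correct and follows the same route as the paper: the inclusion $\Bb_\Omega(\zeta, r_2) \subset \Bb_\Omega(\zeta_n, r) \cap \Bb_\Omega(\zeta_m, r)$ followed by the triangle inequality in $L^2(dV_\Omega)$. The paper simply asserts the inclusion and moves on; your open--closed connectedness argument in the chart $\Phi_{\zeta_n}$ is exactly the natural way to unpack why the choice of $r_2$ forces it, so you are supplying the details the paper leaves implicit rather than taking a different path.
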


\begin{proof} 
If $\zeta \in {\rm supp}(\chi_n) \cap {\rm supp}(\chi_m)$, then
\begin{align*}
\Bb_\Omega(\zeta, r_2) \subset \Bb_\Omega(\zeta_n, r) \cap \Bb_\Omega(\zeta_m, r).
\end{align*}
So 
\begin{align*}
\left(\int_{\Bb_\Omega(\zeta, r_2)}  \abs{h_n-h_m}^2 dV_\Omega\right)^{1/2} & \leq \left(\int_{\Bb_\Omega(\zeta, r_2)} \abs{h_n-f}^2 dV_\Omega\right)^{1/2}+ \left(\int_{\Bb_\Omega(\zeta, r_2)} \abs{h_m-f}^2 dV_\Omega\right)^{1/2} \\
& \leq \epsilon_n + \epsilon_m. 
\end{align*}
\end{proof}

\begin{lemma} $\lim_{\zeta \rightarrow \partial \Omega} \int_{\Bb_\Omega(\zeta, r_2)} \norm{\bar{\partial} \phi_1}_{g_\Omega}^2 dV_\Omega = 0$. In particular, $H_{\phi_1}$ extends to a compact operator on $A^2(\Omega)$.  \end{lemma}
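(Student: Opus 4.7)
The plan is to exploit the partition identity $\phi_1 = \sum_m \hat{\chi}_m h_m$ together with the fact that each $h_m$ is holomorphic on $\Bb_\Omega(\zeta_m, r)$. Differentiating gives $\bar{\partial}\phi_1 = \sum_m (\bar{\partial}\hat{\chi}_m) h_m$; since $\sum_m \hat{\chi}_m \equiv 1$ we have $\sum_m \bar{\partial}\hat{\chi}_m \equiv 0$, so for any index $m_0$ we may freely replace this by $\bar{\partial}\phi_1 = \sum_m (\bar{\partial}\hat{\chi}_m)(h_m - h_{m_0})$, which localizes the estimate to $L^2$-differences of holomorphic approximations of $\phi$ that are controlled by the $\epsilon_m$'s.

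Fix $\zeta \in \Omega$. Using property (2) of the discretization I would choose $m_0 = m_0(\zeta)$ with $\zeta \in \Bb_\Omega(\zeta_{m_0}, r_1)$; then $\chi_{m_0}(\zeta) = 1$, and the chart argument that produced Lemma~\ref{lem:L2_diff_upper} shows $\Bb_\Omega(\zeta, r_2) \subset \Bb_\Omega(\zeta_{m_0}, r)$, so $h_{m_0}$ is holomorphic on the entire integration region. Let $\Ic(\zeta) = \{m : \supp(\chi_m) \cap \Bb_\Omega(\zeta, r_2) \neq \emptyset\}$. Every $m \in \Ic(\zeta)$ satisfies $\zeta_m \in \Bb_\Omega(\zeta, 3r/2)$, so property (3) gives $\#\Ic(\zeta) \leq L$. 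Combining Cauchy-Schwarz on the at most $L$ nonzero terms with the uniform estimate $\norm{\bar{\partial}\hat{\chi}_m}_{g_\Omega} \lesssim 1$ from Equation~\eqref{eq:gradient_estimate}, and then applying the triangle inequality through $\phi$ on the inclusion $\Bb_\Omega(\zeta, r_2) \cap \supp(\chi_m) \subset \Bb_\Omega(\zeta_m, r) \cap \Bb_\Omega(\zeta_{m_0}, r)$, I would obtain the key estimate
\begin{align*}
\int_{\Bb_\Omega(\zeta, r_2)} \norm{\bar{\partial}\phi_1}_{g_\Omega}^2 \, dV_\Omega \lesssim \max\left\{\epsilon_m^2 : m \in \Ic(\zeta) \cup \{m_0\}\right\}.
\end{align*}

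Since every index on the right corresponds to a $\zeta_m$ within Bergman distance $3r/2$ of $\zeta$ and the Bergman metric is proper, the condition $\zeta \to \partial\Omega$ forces each such $\zeta_m \to \partial\Omega$, hence $\epsilon_m \to 0$. This establishes the main limit. For the ``in particular'' clause, $\phi_1$ is smooth on $\Omega$ and lies in $\Sc(\Omega)$ (since the preceding lemma gives $\dom(H_{\phi_1}) = \dom(H_\phi)$, which is dense), so Proposition~\ref{prop:smooth_symbols}(2) yields at once that $H_{\phi_1}$ extends to a compact operator on $A^2(\Omega)$.

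The main technical obstacle is a support subtlety: Lemma~\ref{lem:L2_diff_upper} as stated requires the center $\zeta$ of the ball to lie in both $\supp(\chi_n)$ and $\supp(\chi_m)$, whereas here for generic $m \in \Ic(\zeta)$ only $\supp(\chi_m) \cap \Bb_\Omega(\zeta, r_2) \neq \emptyset$ is guaranteed; I would circumvent this not by a direct citation of the lemma but by rerunning its triangle-inequality step relative to $\phi$, using the choice of $m_0$ to secure $\Bb_\Omega(\zeta, r_2) \subset \Bb_\Omega(\zeta_{m_0}, r)$.
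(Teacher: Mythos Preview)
Your proposal is correct and follows essentially the same approach as the paper: write $\bar{\partial}\phi_1 = \sum_m (\bar{\partial}\hat{\chi}_m)(h_m - h_{m_0})$ using $\sum_m \bar{\partial}\hat{\chi}_m = 0$, bound the number of active indices by $L$, use the uniform gradient estimate on $\hat{\chi}_m$, and control the differences $h_m - h_{m_0}$ via the triangle inequality through $\phi$. Your handling of the support issue is in fact more careful than the paper's own write-up: the paper simply picks $m_1$ as the first index in the active set and cites Lemma~\ref{lem:L2_diff_upper} directly, whereas you correctly note that the hypothesis $\zeta \in \supp(\chi_{m_j})$ need not hold for every active $j$ and instead work on the smaller region $\Bb_\Omega(\zeta,r_2)\cap\supp(\chi_m)\subset \Bb_\Omega(\zeta_{m_0},r)\cap\Bb_\Omega(\zeta_m,r)$, which is what the triangle-inequality step actually needs.
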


\begin{proof} To prove the first assertion, it is enough to show that 
\begin{align*}
\int_{\Bb_\Omega(\zeta, r_2)} \norm{\bar{\partial} \phi_1}_{g_\Omega}^2 dV_\Omega\lesssim \max\{ \epsilon_m^2 : \zeta \in {\rm supp}(\chi_m)\}.
\end{align*}

Fix $\zeta \in \Omega$ and let 
\begin{align*}
\{ m_1,\dots, m_k\} &= \left\{ m :  {\rm supp}(\chi_m) \cap \Bb_{\Omega}(\zeta, r_2) \neq \emptyset \right\} \\
& \subset \{ m : \zeta_m \in \Bb_\Omega(\zeta, r+r_2)\}.
\end{align*}
Notice that $k \leq L$ since $r_2 < \frac{r}{2}$. Also 
\begin{align*}
\bar{\partial} \phi_1(\zeta) = \sum_{j=1}^k h_{m_j}  \bar{\partial} \hat{\chi}_{m_j}
\end{align*}
on $\Bb_\Omega(\zeta, r_2)$. Further, since $\{ \hat{\chi}_m\}$ is a partition of unity, $\sum_{j=1}^k  \bar{\partial} \hat{\chi}_{m_j}= 0$ on $\Bb_\Omega(\zeta, r_2)$. So
\begin{align*}
\bar{\partial} \phi_1=\sum_{j=2}^k \left( h_{m_j}-h_{m_1}\right)  \bar{\partial} \hat{\chi}_{m_j}
\end{align*}
on $\Bb_\Omega(\zeta, r_2)$. Then by Equation~\eqref{eq:gradient_estimate} and Lemma~\ref{lem:L2_diff_upper}
\begin{align*}
\left( \int_{\Bb_\Omega(\zeta, r_2)} \norm{\bar{\partial} \phi_1}_{g_\Omega}^2 dV_\Omega\right)^{1/2} 
&\lesssim  \sum_{j=2}^k  \left(\int_{\Bb_\Omega(\zeta, r_2)} \abs{h_{m_j}-h_{m_1}}^2 dV_\Omega \right)^{1/2} \leq \sum_{j=2}^k (\epsilon_{m_j}+\epsilon_{m_1})  \\
& \lesssim \max\{ \epsilon_m : \zeta \in {\rm supp}(\chi_m)\}.
\end{align*}
This proves the first assertion. 

From Lemma~\ref{lem:L2_diff_upper} we know that ${\rm dom}(H_{\phi_1}) = {\rm dom}(H_\phi)$ and so $\phi_1 \in \Sc(\Omega)$. Hence Proposition~\ref{prop:smooth_symbols}  implies that $H_{\phi_1}$ extends to a compact operator. 
\end{proof} 

\begin{lemma} $H_\phi$ extends to a compact operator. \end{lemma}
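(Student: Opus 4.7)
The plan is to combine the previous two lemmas via the decomposition $\phi = \phi_1 + \phi_2$ already built in this section. More precisely, the aim is to verify that
\begin{align*}
H_\phi(f) = H_{\phi_1}(f) + H_{\phi_2}(f)
\end{align*}
on ${\rm dom}(H_\phi)$ and then transport compactness from the two summands to $H_\phi$.

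First I would argue that the identity above makes sense pointwise. For any $f \in {\rm dom}(H_\phi) = {\rm dom}(M_\phi)$, the function $\phi_2 \cdot f$ lies in $L^2(\Omega)$ by the previous lemma (which gives ${\rm dom}(M_{\phi_2}) = A^2(\Omega)$ with $M_{\phi_2}$ even compact). Therefore $\phi_1 \cdot f = \phi \cdot f - \phi_2 \cdot f$ is also in $L^2(\Omega)$, so $f \in {\rm dom}(M_{\phi_1}) = {\rm dom}(H_{\phi_1})$, consistent with the already observed equality ${\rm dom}(H_{\phi_1}) = {\rm dom}(H_\phi)$. Applying $(\id - P_\Omega)$ to the decomposition $\phi \cdot f = \phi_1 \cdot f + \phi_2 \cdot f$ then gives the desired identity.

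Next, the two preceding lemmas provide compact extensions $\hat H_{\phi_1}$ and $\hat H_{\phi_2}$ of $H_{\phi_1}$ and $H_{\phi_2}$ to all of $A^2(\Omega)$. Their sum $T := \hat H_{\phi_1} + \hat H_{\phi_2}$ is therefore a compact operator on $A^2(\Omega)$. By the pointwise identity above, $T$ agrees with $H_\phi$ on ${\rm dom}(H_\phi)$, which is dense in $A^2(\Omega)$ because $\phi \in \Sc(\Omega)$. Hence $T$ is a bounded (indeed compact) extension of $H_\phi$ to $A^2(\Omega)$, finishing the proof of the implication (2) $\Rightarrow$ (1) and thus of Theorem~\ref{thm:main}.

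No serious obstacle is expected at this stage: all the analytic work has been done in the preceding lemmas (the $L^2$-decay of $\phi_2$ giving compactness of $M_{\phi_2}$ and hence of $H_{\phi_2}$, and the $L^2$-decay of $\bar\partial \phi_1$ together with Proposition~\ref{prop:smooth_symbols} giving compactness of $H_{\phi_1}$). The only care needed here is the bookkeeping regarding domains of definition, which is handled by the observation that $M_{\phi_2}$ is globally defined on $A^2(\Omega)$.
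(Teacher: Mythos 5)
Your proposal is correct and follows exactly the paper's argument: the paper simply states that by the preceding two lemmas $H_\phi = H_{\phi_1} + H_{\phi_2}$ extends to a compact operator. You have merely made explicit the domain bookkeeping and the density argument that the paper leaves implicit.
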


\begin{proof} By the last two lemmas we see that $H_{\phi} = H_{\phi_1}+H_{\phi_2}$ extends to a compact operator on $A^2(\Omega)$. 
\end{proof}

\section{Proof of Theorem~\ref{thm:main_2}}\label{sec:pf_of_thm_main2}

The proof of Theorem~\ref{thm:main_2} is very similar to the proof of Theorem~\ref{thm:main} and is left to the reader.

\section{Proof of Theorem~\ref{thm:main_C0}}\label{sec:pf_of_thm_C0}

Suppose $\Omega \subset \Cb^d$ is a bounded domain with bounded intrinsic geometry, $\partial \Omega$ is $\Cc^0$, and $\phi \in \Cc(\overline{\Omega})$. Let $C_1 > 1$ and $\{ \Phi_\zeta : \zeta \in \Omega\}$ satisfy Theorem~\ref{thm:good_charts}.

Theorem~\ref{thm:main_C0} is a consequence of Theorem~\ref{thm:main} and the next three lemmas. 

\begin{lemma} If $\phi$ is holomorphic on every analytic variety in $\partial\Omega$, then there exists $r > 0$ such that 
$$
\lim_{\zeta \rightarrow \partial \Omega} \inf\left\{ \int_{\Bb_\Omega(\zeta,r)} \abs{\phi-h}^2  dV_\Omega  : h \in {\rm Hol}\left(\Bb_\Omega(\zeta,r)\right)\right\} = 0.
$$ 
\end{lemma}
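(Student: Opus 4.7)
The plan is a normal-families argument by contradiction. Fix $r>0$ small enough that $\Bb_\Omega(\zeta,r)\subset\Phi_\zeta(C_1 r\,\Bb)$ for every $\zeta\in\Omega$ (which holds, by the remark following Theorem~\ref{thm:good_charts}, whenever $r<1/C_1^2$). Suppose for contradiction that there exist $\epsilon>0$ and a sequence $\zeta_n\to\partial\Omega$ with
\begin{equation*}
\int_{\Bb_\Omega(\zeta_n,r)}\abs{\phi-h}^2\,dV_\Omega\geq \epsilon \qquad \text{for every } h\in\Hol\!\left(\Bb_\Omega(\zeta_n,r)\right).
\end{equation*}
Consider the holomorphic embeddings $\Phi_{\zeta_n}:\Bb\to\Omega$ given by Theorem~\ref{thm:good_charts}. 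Since $\Omega$ is bounded, $\{\Phi_{\zeta_n}\}$ is a normal family, so after passing to a subsequence $\Phi_{\zeta_n}\to F$ locally uniformly on $\Bb$ for some holomorphic $F:\Bb\to\overline{\Omega}$ with $F(0)=\lim\zeta_n\in\partial\Omega$.

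The first key claim is that $F(\Bb)\subset\partial\Omega$. Indeed, if $F(w_0)\in\Omega$ for some $w_0\in\Bb$, then by Theorem~\ref{thm:good_charts} we would have $\dist_\Omega(\zeta_n,\Phi_{\zeta_n}(w_0))\leq C_1\norm{w_0}_2$, and since $\Phi_{\zeta_n}(w_0)\to F(w_0)$ the points $\zeta_n$ would eventually lie in the closed Bergman ball of radius $C_1\norm{w_0}_2+1$ around $F(w_0)$; by completeness of $g_\Omega$ and Hopf--Rinow this ball is a compact subset of $\Omega$, contradicting $\zeta_n\to\partial\Omega$.

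The second key claim is that $\phi\circ F$ is holomorphic on $\Bb$. By hypothesis, for every holomorphic $G:\Db\to\partial\Omega$ the composition $\phi\circ G$ is holomorphic; applying this to $F$ restricted to any complex affine line intersected with $\Bb$ shows $\phi\circ F$ is separately holomorphic in each coordinate variable. Since $\phi\in\Cc(\overline{\Omega})$ and $F$ is continuous, $\phi\circ F$ is continuous, so by the Osgood (or Hartogs) theorem on separate analyticity, $\phi\circ F\in\Hol(\Bb)$. Now define
\begin{equation*}
h_n := (\phi\circ F)\circ \Phi_{\zeta_n}^{-1} \in \Hol\!\left(\Phi_{\zeta_n}(\Bb)\right).
\end{equation*}
Since $\Bb_\Omega(\zeta_n,r)\subset\Phi_{\zeta_n}(\Bb)$, this restricts to an element of $\Hol(\Bb_\Omega(\zeta_n,r))$. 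Pulling back via $\Phi_{\zeta_n}$ and using that $\Phi_{\zeta_n}^*g_\Omega$ is uniformly bi-Lipschitz to the Euclidean metric (so $\Phi_{\zeta_n}^* dV_\Omega\asymp d\mu$ with constants independent of $n$), we obtain
\begin{equation*}
\int_{\Bb_\Omega(\zeta_n,r)}\abs{\phi-h_n}^2 dV_\Omega \;\lesssim\; \int_{C_1 r\,\Bb} \abs{\phi\circ\Phi_{\zeta_n}-\phi\circ F}^2\, d\mu.
\end{equation*}
Because $\Phi_{\zeta_n}\to F$ uniformly on $\overline{C_1 r\,\Bb}$, and $\phi$ is uniformly continuous on the compact set $\overline{\Omega}$, the right-hand side tends to $0$, contradicting the lower bound $\epsilon$.

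The main obstacle is the second claim: extracting joint holomorphicity of $\phi\circ F$ from the assumption, which only supplies holomorphicity along disks. The continuity of $\phi$ lets us invoke a classical separate-analyticity theorem, and it is precisely at this point that the hypothesis $\phi\in\Cc(\overline{\Omega})$ (rather than merely $\phi\in L^2(\Omega)$) is essential. The remaining compactness/contradiction bookkeeping is then a routine pullback computation via the uniform charts of Theorem~\ref{thm:good_charts}.
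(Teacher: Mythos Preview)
Your proof is correct and takes essentially the same approach as the paper: extract a subsequential limit $F$ of the charts $\Phi_{\zeta_n}$, show $F(\Bb)\subset\partial\Omega$ via properness of the Bergman distance, and approximate $\phi$ by $h_n=(\phi\circ F)\circ\Phi_{\zeta_n}^{-1}$. Your justification that $\phi\circ F$ is holomorphic---reducing to complex lines and invoking Osgood's theorem on separate analyticity---is in fact more careful than the paper, which simply asserts this ``by assumption'' without addressing the passage from disks to the ball.
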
 

\begin{proof}  Fix $r < \frac{1}{2C_1}$. Then $\Bb_\Omega(\zeta, r) \subset \Phi_\zeta\left(\frac{1}{2}\Bb\right)$ for all $\zeta \in \Omega$.  Fix a sequence $(\zeta_m)_{m \geq 1}$ in $\Omega$ such that $\zeta_m \rightarrow \partial \Omega$ and
\begin{align*}
\limsup_{\zeta \rightarrow \partial \Omega} & \, \inf\left\{ \int_{\Bb_\Omega(\zeta,r)} \abs{\phi-h}^2  dV_\Omega  : h \in {\rm Hol}\left(\Bb_\Omega(\zeta,r)\right)\right\}\\
& =\lim_{m \rightarrow \infty} \inf\left\{ \int_{\Bb_\Omega(\zeta_m,r)} \abs{\phi-h}^2  dV_\Omega  : h \in {\rm Hol}\left(\Bb_\Omega(\zeta_m,r)\right)\right\}.
\end{align*}

Passing to a subsequence we can suppose that $\Phi_{\zeta_m}$ converges locally uniformly to $\Phi : \Bb \rightarrow \overline{\Omega}$. Since the Bergman distance on $\Omega$ is proper (by the Hopf-Rinow theorem) and $\Phi_\zeta(\Bb) \subset \Bb_\Omega(\zeta, C_1)$, we must have $\Phi(\Bb) \subset \partial\Omega$. Then by assumption, $h_0 := \phi \circ \Phi : \Bb \rightarrow \Cb$ is holomorphic. Then, if $h_m := \left. h_0 \circ \Phi^{-1}_{\zeta_m}\right|_{\Bb_\Omega(\zeta_m,r)}$ we have
\begin{align*}
\lim_{m \rightarrow \infty} & \int_{\Bb_\Omega(\zeta_m,r)} \abs{\phi-h_m}^2  dV_\Omega  \leq \limsup_{m \rightarrow \infty} \int_{\frac{1}{2}\Bb} \abs{\phi \circ \Phi_{\zeta_m}-h_0}^2  \Phi_{\zeta_m}^*dV_\Omega \\
& \leq C_1^{2d} \limsup_{m \rightarrow \infty} \int_{\frac{1}{2}\Bb} \abs{\phi \circ \Phi_{\zeta_m}-h_0}^2 d\mu =0.
\end{align*}

\end{proof}

To the prove the other direction in Theorem~\ref{thm:main_C0}, we first show that every analytic variety in $\partial \Omega$ can be locally obtained by taking a limit of the embeddings $\Phi_\zeta : \Bb \rightarrow \Omega$ 

\begin{lemma}\label{lem:holo_varieties_in_bd} If $F : \Db \rightarrow \partial \Omega$ is holomorphic and $z_0 \in \Db$, then there exist $\delta_0 > 0$ and a sequence $(\zeta_m)_{m \geq 1}$ such that $\Phi_{\zeta_m}$ converges locally uniformly to a holomorphic map $\Phi : \Bb \rightarrow \partial \Omega$ with $\Phi(0)=F(z_0)$ and
\begin{align*}
F(\Db(z_0, \delta)) \subset \Phi(\Bb).
\end{align*}
\end{lemma}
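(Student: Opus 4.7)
\emph{Overall plan.} My approach is to perturb $F$ slightly into $\Omega$ using the $\Cc^0$-boundary condition, pick $\zeta_m$ on the perturbed disk so that the whole nearby image fits inside a single chart $\Phi_{\zeta_m}(\Bb)$, factor the perturbation as $F_{\epsilon_m} = \Phi_{\zeta_m} \circ \psi_m$, and then pass to normal-families limits in both factors. A maximum-principle argument applied to the limit $\psi$ of the $\psi_m$ will ensure that $\psi$ takes values in the open ball $\Bb$, giving a commutative diagram $F = \Phi \circ \psi$.

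\emph{Setup of the perturbation.} The $\Cc^0$-assumption at $x_0 := F(z_0)$ yields, in suitable linear coordinates, a neighborhood $U$ of $x_0$ on which $\partial\Omega$ is a graph, a vector $\vec v \in \Cb^d$, and some $\epsilon_0 > 0$ such that $y + \epsilon \vec v \in \Omega$ for every $y$ in a smaller neighborhood $V \ni x_0$ and every $\epsilon \in (0,\epsilon_0]$. Pick $\delta_0 > 0$ with $F(\Db(z_0,\delta_0)) \subset V$ and set
\begin{align*}
F_\epsilon : \Db(z_0, \delta_0) \to \Omega, \qquad F_\epsilon(z) := F(z) + \epsilon \vec v, \qquad \epsilon \in (0,\epsilon_0].
\end{align*}
Fix $\epsilon_m \to 0^+$ in $(0,\epsilon_0]$ and put $\zeta_m := F_{\epsilon_m}(z_0)$, so $\zeta_m \to x_0 \in \partial\Omega$ in the Euclidean sense.

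\emph{Confinement, compositions, and conclusion.} The Kobayashi contraction principle applied to $F_{\epsilon_m}$ gives
\begin{align*}
\dist^K_\Omega\bigl(\zeta_m,\, F_{\epsilon_m}(z)\bigr) \leq \dist^K_{\Db(z_0,\delta_0)}(z_0, z) = \arctanh\bigl(\abs{z-z_0}/\delta_0\bigr),
\end{align*}
and Theorem~\ref{thm:kob_vs_bergman} upgrades this to a Bergman-distance bound with a constant $C_K$ independent of $m$. The remark following Theorem~\ref{thm:good_charts} gives $\Bb_\Omega(\zeta_m, 1/(2C_1^2)) \subset \Phi_{\zeta_m}(\Bb)$, so shrinking $\delta \in (0,\delta_0/2)$ so that $C_K \arctanh(\delta/\delta_0) < 1/(2C_1^2)$ yields a $\delta$, \emph{independent of $m$}, for which $F_{\epsilon_m}(\Db(z_0,\delta)) \subset \Phi_{\zeta_m}(\Bb)$. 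The holomorphic maps
\begin{align*}
\psi_m := \Phi_{\zeta_m}^{-1} \circ F_{\epsilon_m} : \Db(z_0, \delta) \to \Bb
\end{align*}
are then well defined and satisfy $\psi_m(z_0) = 0$. Both $\{\Phi_{\zeta_m}\}$ (into the bounded set $\Omega$) and $\{\psi_m\}$ (into $\Bb$) are normal families, so after passing to a subsequence I obtain locally uniform limits $\Phi_{\zeta_m} \to \Phi : \Bb \to \overline{\Omega}$ and $\psi_m \to \psi : \Db(z_0,\delta) \to \overline{\Bb}$. Since $\Phi_{\zeta_m}(\Bb) \subset \Bb_\Omega(\zeta_m, C_1)$ and $\zeta_m \to x_0 \in \partial\Omega$, properness of the Bergman distance (Hopf-Rinow, via Theorem~\ref{thm:bergman_good}) forces $\Phi(\Bb) \subset \partial\Omega$; clearly $\Phi(0) = x_0 = F(z_0)$. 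Because $\norm{\psi}^2$ is subharmonic on $\Db(z_0,\delta)$ with values in $[0,1]$ and $\psi(z_0) = 0$, the maximum principle rules out any boundary-attaining point, so $\psi(\Db(z_0,\delta)) \subset \Bb$. Passing to the limit in $\Phi_{\zeta_m} \circ \psi_m = F_{\epsilon_m}$ yields $\Phi \circ \psi = F$ on $\Db(z_0,\delta)$, whence $F(\Db(z_0,\delta)) \subset \Phi(\Bb)$.

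\emph{Main obstacle.} The crucial and least routine point is arranging, uniformly in $m$, that the perturbed images $F_{\epsilon_m}(\Db(z_0,\delta))$ fit inside a single chart $\Phi_{\zeta_m}(\Bb)$ for some $\delta$ not depending on $m$. This exploits three ingredients in tandem: Kobayashi contraction (which, crucially, does not degenerate as $\epsilon_m \to 0$), the Kobayashi-Bergman bi-Lipschitz comparison of Theorem~\ref{thm:kob_vs_bergman}, and the uniform Bergman-ball containment inside $\Phi_{\zeta_m}(\Bb)$ supplied by Theorem~\ref{thm:good_charts}. Once the compositions $\psi_m$ are defined on a common disk, the remaining steps (normal-families extraction, subharmonicity of $\norm{\psi}^2$, and the factorization identity) are essentially formal.
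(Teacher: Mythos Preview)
Your proof is correct and follows the same strategy as the paper: push $F$ into $\Omega$ along a transverse direction supplied by the $\Cc^0$ boundary, use Kobayashi contraction together with Theorem~\ref{thm:kob_vs_bergman} to confine the perturbed disk inside a single chart $\Phi_{\zeta_m}(\Bb)$ with a radius $\delta$ independent of $m$, and pass to a normal-families limit. The only difference is in the last step: the paper arranges the confinement into $\Phi_{\zeta_m}\bigl(\tfrac{1}{2}\Bb\bigr)$ and uses compactness of $\tfrac{1}{2}\overline{\Bb}\subset\Bb$ to conclude $F(\Db(z_0,\delta))\subset\Phi(\Bb)$ directly, whereas you factor through $\psi_m$ and invoke the maximum principle---a valid but unnecessary detour, since your own use of the remark after Theorem~\ref{thm:good_charts} already gives $\psi_m(\Db(z_0,\delta))\subset\tfrac{1}{2C_1}\Bb$.
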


\begin{proof}Since $\partial \Omega$ is $\Cc^0$ there exists a unit vector $\nu \in \Cb^d$ and $\delta_0 > 0$ such that 
\begin{align*}
t \nu + F( \Db(z_0, \delta_0)) \subset \Omega
\end{align*}
for all $t \in (0,\delta_0)$. 

Let $\zeta_m := \frac{\delta_0}{m} \nu + F(z_0)$. Then let $d_\Omega^K$ and $d_{\Db}^K$ denote the Kobayashi distances on $\Omega$ and $\Db$ respectively. By the distance decreasing property of the Kobayashi metric, 
\begin{align*}
d_\Omega^K\left(\zeta_m, \frac{\delta_0}{m}\nu + F(w) \right) \leq d_{\Db}^K\left(0, \frac{w-z_0}{\delta_0} \right)
\end{align*}
for all $w \in \Db(z_0, \delta_0)$. Then by Theorem~\ref{thm:kob_vs_bergman}, there exists $\delta > 0$ such that 
\begin{align}
\label{eqn:inclusion}
 \frac{\delta_0}{m} \nu + \Psi( \Db(z_0, \delta)) \subset \Bb_\Omega\left(\zeta_m, \frac{1}{2C_1}\right) \subset \Phi_{\zeta_m}\left(\frac{1}{2}\Bb\right).
\end{align}
Passing to a subsequence we can suppose that $\Phi_{\zeta_m}$ converges locally uniformly to $\Phi : \Bb \rightarrow \overline{\Omega}$. Then
\begin{align*}
\Phi(0)=\lim_{m \rightarrow \infty} \Phi_m(0) = F(z_0).
\end{align*}
Also, since the Bergman distance on $\Omega$ is proper (by the Hopf-Rinow theorem) and $\Phi_\zeta(\Bb) \subset \Bb_\Omega(\zeta, C_1)$, we must have $\Phi(\Bb) \subset \partial\Omega$. Finally, Equation~\eqref{eqn:inclusion} implies that $F(\Db(z_0, \delta)) \subset \Phi(\Bb)$. 

\end{proof}

\begin{lemma} If there exists $r > 0$ such that $$\lim_{\zeta \rightarrow \partial \Omega} \inf\left\{ \int_{\Bb_\Omega(\zeta,r)} \abs{\phi-h}^2  dV_\Omega  : h \in {\rm Hol}\left(\Bb_\Omega(\zeta,r)\right)\right\} = 0,$$ 
then $\phi$ is holomorphic on every analytic variety in $\partial\Omega$.
\end{lemma}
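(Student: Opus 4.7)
Fix a holomorphic map $F : \Db \to \partial\Omega$ and a point $z_0 \in \Db$; it suffices to show $\phi \circ F$ is holomorphic on a disk around $z_0$. My plan has two main steps: (a) show that the limiting embedding $\Phi : \Bb \to \partial\Omega$ produced by Lemma~\ref{lem:holo_varieties_in_bd} satisfies $\phi \circ \Phi \in {\rm Hol}(\frac{r}{C_1}\Bb)$; and (b) realize $F$ near $z_0$ as $\Phi \circ \tilde F$ for some holomorphic $\tilde F$ whose image sits inside $\frac{r}{C_1}\Bb$. Together these give $\phi \circ F = (\phi \circ \Phi) \circ \tilde F$, a composition of holomorphic maps. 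For setup, apply Lemma~\ref{lem:holo_varieties_in_bd} at $z_0$ to obtain a unit vector $\nu$, constants $\delta_0, \delta > 0$, points $\zeta_m := \frac{\delta_0}{m}\nu + F(z_0)$, and (after extracting a subsequence) $\Phi_{\zeta_m} \to \Phi$ locally uniformly on $\Bb$ with $\Phi(0) = F(z_0)$. Write $F_m := \frac{\delta_0}{m}\nu + F$; the proof of that lemma allows us to shrink $\delta$ so that $F_m(\Db(z_0, \delta)) \subset \Bb_\Omega(\zeta_m, r_1)$ for any prescribed $r_1 > 0$. By hypothesis, pick $h_m \in {\rm Hol}(\Bb_\Omega(\zeta_m, r))$ with $\epsilon_m^2 := \int_{\Bb_\Omega(\zeta_m, r)} |\phi - h_m|^2\, dV_\Omega \to 0$.

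For (a), set $g_m := h_m \circ \Phi_{\zeta_m}$, which is holomorphic on $\frac{r}{C_1}\Bb$ since $\Phi_{\zeta_m}(\frac{r}{C_1}\Bb) \subset \Bb_\Omega(\zeta_m, r)$ by Theorem~\ref{thm:good_charts}. Because $\Phi_{\zeta_m}^{*} dV_\Omega \asymp d\mu$ on $\Bb$, pulling back the $L^2$-control of $h_m - \phi$ gives $\int_{\frac{r}{C_1}\Bb} |g_m - \phi \circ \Phi_{\zeta_m}|^2\, d\mu \lesssim \epsilon_m^2 \to 0$. Proposition~\ref{prop:sMVT} applied to $|h_m|^2$ (whose logarithm is plurisubharmonic), combined with Propositions~\ref{prop:off_diagonal_estimate} and~\ref{prop:volume_comp} to convert between $d\mu$ and $dV_\Omega$, and the uniform bound $V_\Omega(\Bb_\Omega(\zeta_m, r)) \lesssim 1$ coming from bounded sectional curvature (Bishop-Gromov), yields a uniform sup-norm bound on $g_m$ over compact subsets of $\frac{r}{C_1}\Bb$. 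Montel then produces a locally uniform limit $G$, and the $L^2$ convergence together with the uniform convergence $\phi \circ \Phi_{\zeta_m} \to \phi \circ \Phi$ (from continuity of $\phi$) forces $G = \phi \circ \Phi$. Thus $\phi \circ \Phi$ is holomorphic on $\frac{r}{C_1}\Bb$.

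For (b), choose $r_1 < r/C_1^2$ and shrink $\delta$ so that $F_m(\Db(z_0, \delta)) \subset \Bb_\Omega(\zeta_m, r_1) \subset \Phi_{\zeta_m}(C_1 r_1 \Bb)$ (the second inclusion by Theorem~\ref{thm:good_charts}). Since $\Phi_{\zeta_m}$ is a holomorphic embedding, the lift $\tilde F_m := \Phi_{\zeta_m}^{-1} \circ F_m : \Db(z_0, \delta) \to C_1 r_1 \Bb$ is a well-defined, uniformly bounded holomorphic map. Extract a Montel subsequence $\tilde F_m \to \tilde F$ locally uniformly, and take limits in $F_m = \Phi_{\zeta_m} \circ \tilde F_m$ to conclude $F = \Phi \circ \tilde F$ on $\Db(z_0, \delta)$, with $\tilde F(\Db(z_0, \delta)) \subset \overline{C_1 r_1 \Bb} \subset \frac{r}{C_1}\Bb$. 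Therefore $\phi \circ F = (\phi \circ \Phi) \circ \tilde F$ is a composition of holomorphic maps on $\Db(z_0, \delta)$.

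The main obstacle is step (b): the limit $\Phi$ need not be injective, so we cannot literally invert it. The workaround is to lift at the finite-$m$ level, where each $\Phi_{\zeta_m}$ is an embedding, and only then take a Montel limit. This forces the delicate choice $r_1 < r/C_1^2$, ensuring the lift lands inside the ball on which $\phi \circ \Phi$ is already known to be holomorphic, and relies on the uniform Montel bounds supplied by the bounded-intrinsic-geometry estimates of Section~\ref{sec:estimates}.
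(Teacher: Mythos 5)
Your proposal is correct and follows essentially the same route as the paper: step (a) is the paper's argument (pull $h_m$ back through the charts $\Phi_{\zeta_m}$, get an $L^2$-bound hence Montel compactness, and identify the limit with $\phi\circ\Phi$), with only a cosmetic difference in how the locally uniform bounds are obtained. Your step (b) is a careful spelling-out of the reduction the paper performs tersely via Lemma~\ref{lem:holo_varieties_in_bd}; the explicit lift $\tilde F_m=\Phi_{\zeta_m}^{-1}\circ F_m$ and the choice $r_1<r/C_1^2$ correctly handle the fact that the limit $\Phi$ need not be injective, a point the paper leaves implicit.
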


\begin{proof} Using Lemma~\ref{lem:holo_varieties_in_bd}, it is enough to fix a sequence $(\zeta_m)_{m \geq 1}$ in $\Omega$ where $\zeta_m \rightarrow \partial \Omega$ and $\Phi_{\zeta_m}$ converges locally uniformly to a holomorphic map $\Phi : \Bb \rightarrow \partial \Omega$, then show that $\phi \circ \Phi$ is holomorphic in a neighborhood of $0$. 

By hypothesis, for each $m \geq 1$, there is some $h_m \in  {\rm Hol} \left( \Bb_\Omega(\zeta_m,r)\right)$ such that: if 
\begin{align*}
\epsilon_m: =\left(\int_{\Bb_\Omega(\zeta_m, r)} \abs{ \phi - h_m }^2  dV_\Omega\right)^{1/2}, 
\end{align*}
then $\lim_{m \rightarrow \infty} \epsilon_m = 0$. 

Fix $r_1 <\min\left\{ \frac{r}{C_1},1\right\}$. Then $\Phi_{\zeta_m}(r_1 \Bb) \subset \Bb_\Omega(\zeta_m,r)$ and so $\hat{h}_m : = h_m \circ \Phi_{\zeta_m}$ is well defined on $r_1 \Bb$. Also, if $\hat{\phi}_m := \phi \circ \Phi_{\zeta_m}$, then $\hat{\phi}_m$ converges uniformly on $r_1 \Bb$ to $\hat{\phi}:= \phi \circ \Phi$. 

By Theorem~\ref{thm:good_charts},
\begin{align*}
\int_{r_1 \Bb} \abs{\hat{\phi}_m - \hat{h}_m}^2 d\mu & \leq C_1^{2d} \int_{r_1 \Bb} \abs{\hat{\phi}_m - \hat{h}_m}^2 \Phi_{\zeta_m}^*dV_\Omega \\
& = C_1^{2d} \int_{\Phi_{\zeta_m}(r_1\Bb)}  \abs{ \phi - h_m }^2  dV_\Omega \leq C_1^{2d} \epsilon_m^2. 
\end{align*}
Since $\hat{\phi}_m$ converges uniformly to $\hat{\phi}$, we then see that $\int_{r_1 \Bb} \abs{\hat{h}_m}^2 d\mu$ is uniformly bounded. So after passing to a subsequence we can suppose that $\hat{h}_m$ converges locally uniformly to a holomorphic function $\hat{h}$ on $r_1 \Bb$. Then by Fatou's lemma
\begin{align*}
\int_{r_1 \Bb} \abs{\hat{\phi} - \hat{h}}^2 d\mu \leq \liminf_{m \rightarrow \infty} \int_{r_1 \Bb} \abs{\hat{\phi}_m - \hat{h}_m}^2 d\mu = 0. 
\end{align*}
So $\phi \circ \Phi=\hat{\phi}$ coincides with $\hat{h}$, a holomorphic function, on $r_1 \Bb$. 

\end{proof}

\section{When the standard potential has self bounded gradient}\label{sec:char_SBG}

In this section we characterize when the standard potential for the Bergman metric has self bounded gradient. 

\begin{theorem}\label{thm:char_self_bd_gradient} Suppose $\Omega \subset \Cb^d$ is a domain with bounded intrinsic geometry and $\{ \Phi_\zeta : \zeta \in \Omega\}$ satisfies Theorem~\ref{thm:good_charts}. Then the following are equivalent: 
\begin{enumerate}
\item $\log \Bf_\Omega(z,z)$ has self bounded gradient, 
\item For every $r > 0$  there exists $C=C(r) > 1$ such that: if $\dist_\Omega(z,\zeta) \leq r$, then 
\begin{align*}
\frac{1}{C} \leq \frac{\Bf_\Omega(z,z)}{\Bf_\Omega(\zeta,\zeta)} \leq C.
\end{align*}
\item For every $r > 0$  there exists $C=C(r) > 1$ such that: if $\zeta \in \Omega$, then 
\begin{align*}
 \frac{1}{C} \frac{1}{\mu\left( \Bb_\Omega(\zeta,r) \right)} \leq \Bf_\Omega(\zeta,\zeta) \leq C \frac{1}{\mu\left( \Bb_\Omega(\zeta,r) \right)}.
\end{align*}
\item For every $r > 0$  there exists $C=C(r) > 1$ such that: if $\zeta \in \Omega$, then 
\begin{align*}
 \frac{1}{C} \frac{d\mu}{\mu\left( \Bb_\Omega(\zeta,r) \right)} \leq dV_\Omega \leq C \frac{d\mu}{\mu\left( \Bb_\Omega(\zeta,r) \right)} 
\end{align*}
on $\Bb_\Omega(\zeta,r)$. 
\item 
\begin{align*}
\sup_{\zeta \in \Omega} \norm{ \left. \partial_w \log \abs{\det \Phi_\zeta^\prime(w)} \, \right|_{w=0}}_{2}< +\infty.
\end{align*}
\end{enumerate} 

\end{theorem}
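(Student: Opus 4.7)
The plan is to use the charts $\Phi_\zeta : \Bb \to \Omega$ from Theorem~\ref{thm:good_charts} to reduce all five conditions to estimates on the pluriharmonic function $\log |\det \Phi_\zeta^\prime(w)|^2$ on a ball of fixed radius around $0 \in \Bb$. The central identity is
\begin{align*}
\log \Bf_\Omega(\Phi_\zeta(w),\Phi_\zeta(w)) = \log \beta_\zeta(w,w) - \log |\det \Phi_\zeta^\prime(w)|^2,
\end{align*}
and by Theorem~\ref{thm:Bergman_kernel_est} both $\log \beta_\zeta(w,w)$ and its $w$-derivatives of all orders are uniformly bounded on compact subsets of $\Bb$, independently of $\zeta$. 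So in a chart, $\log \Bf_\Omega(z,z)$ equals $-\log |\det \Phi_\zeta^\prime|^2$ up to a term of bounded $\Cc^k$-norm. The implication (1) $\Leftrightarrow$ (5) then follows by differentiating the identity at $w=0$: using the chain rule together with the bi-Lipschitz bound $\Phi_\zeta^* g_\Omega \asymp g_{\Euc}$ on $\Bb$, the norm $\norm{\partial \log \Bf_\Omega(z,z)\vert_{z=\zeta}}_{g_\Omega}$ is comparable to the Euclidean norm of $\partial_w \log \det \Phi_\zeta^\prime(w)\vert_{w=0}$ modulo a uniformly bounded contribution from $\log \beta_\zeta$.

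For (1) $\Rightarrow$ (2) I would integrate the real $1$-form $d \log \Bf_\Omega = 2\Real \partial \log \Bf_\Omega$ along a unit-speed Bergman geodesic from $\zeta$ to $z$; the self-bounded gradient hypothesis gives $|d \log \Bf_\Omega(\gamma^\prime)| \leq 2\norm{\partial \log \Bf_\Omega}_{g_\Omega} \leq 2C$, so $|\log \Bf_\Omega(z,z) - \log \Bf_\Omega(\zeta,\zeta)| \leq 2Cr$. For (2) $\Rightarrow$ (5) I would observe that Theorem~\ref{thm:Bergman_kernel_est}(1) converts the ratio bound in (2) into $|\det \Phi_\zeta^\prime(w)|^2 \asymp |\det \Phi_\zeta^\prime(0)|^2$ on some fixed ball $r_0 \Bb \subset \Phi_\zeta^{-1}(\Bb_\Omega(\zeta,r))$; since $\det \Phi_\zeta^\prime$ is a nowhere vanishing holomorphic function on $\Bb$, the real-valued function $\log|\det \Phi_\zeta^\prime|^2$ is pluriharmonic on $r_0\Bb$ with uniformly bounded oscillation, and the standard gradient estimate for harmonic functions then produces a uniform bound on $\partial_w \log|\det \Phi_\zeta^\prime(w)|^2\vert_{w=0} = \partial_w \log \det \Phi_\zeta^\prime(w)\vert_{w=0}$, which is (5).

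For (2) $\Leftrightarrow$ (3) I would combine $\Bf_\Omega(\zeta,\zeta) \asymp |\det \Phi_\zeta^\prime(0)|^{-2}$ (Theorem~\ref{thm:Bergman_kernel_est}(1)) with the inclusion $\Phi_\zeta((r/C_1)\Bb) \subset \Bb_\Omega(\zeta,r) \subset \Phi_\zeta(C_1 r \Bb)$ and the change of variables formula, so that $\mu(\Bb_\Omega(\zeta,r))$ is comparable to an integral of $|\det \Phi_\zeta^\prime(w)|^2$ over a Euclidean ball of radius $\asymp r$. If (2) holds then $|\det \Phi_\zeta^\prime(w)|^2 \asymp |\det \Phi_\zeta^\prime(0)|^2$ on this ball and (3) follows; conversely, (3) applied simultaneously at $\zeta$ and at any $z \in \Bb_\Omega(\zeta,r)$, together with the elementary comparability $\mu(\Bb_\Omega(z,r)) \asymp \mu(\Bb_\Omega(\zeta,r))$ (itself deducible from (3) applied with radii $r$ and $2r$), yields (2). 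Finally (3) $\Leftrightarrow$ (4) is immediate from Proposition~\ref{prop:volume_comp} together with the already-proved (2): on $\Bb_\Omega(\zeta,r)$ one has $\Bf_\Omega(z,z) \asymp \Bf_\Omega(\zeta,\zeta) \asymp \mu(\Bb_\Omega(\zeta,r))^{-1}$, and multiplying by $d\mu$ converts $dV_\Omega \asymp \Bf_\Omega(z,z) d\mu$ into (4).

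The main technical point is the implication (2) $\Rightarrow$ (5): extracting a derivative bound for the holomorphic function $\log \det \Phi_\zeta^\prime$ at $0$ from a mere $L^\infty$-bound on $|\det \Phi_\zeta^\prime|^2$. The cleanest route is to exploit the fact that $\det \Phi_\zeta^\prime$ is nowhere zero on $\Bb$ so that $\log|\det \Phi_\zeta^\prime|^2$ is pluriharmonic, after which the classical gradient estimate for real-valued harmonic functions on Euclidean balls closes the argument; this observation avoids any Borel--Carath\'eodory type step for holomorphic functions with only one-sided real-part control, and is what makes the whole chain of equivalences go through smoothly.
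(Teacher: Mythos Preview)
Your outline is correct and largely parallels the paper's proof; the chain of implications and the use of the identity $\log\Bf_\Omega(\Phi_\zeta(w),\Phi_\zeta(w)) = \log\beta_\zeta(w,w) - \log|\det\Phi_\zeta'(w)|^2$ together with Theorem~\ref{thm:Bergman_kernel_est} is exactly how the paper organizes things. The one genuinely different step is $(2)\Rightarrow(5)$: the paper picks a sequence $(\zeta_m)$ realizing the supremum, sets $f_m(w)=\det\Phi_{\zeta_m}'(w)/\det\Phi_{\zeta_m}'(0)$, notes that $|f_m|\asymp 1$ on $\Bb$ by $(2)$ and Theorem~\ref{thm:Bergman_kernel_est}(1), and then passes to a locally uniform limit via Montel's theorem to conclude finiteness. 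Your route through the interior gradient estimate for the bounded pluriharmonic function $\log|\det\Phi_\zeta'|^2-\log|\det\Phi_\zeta'(0)|^2$ is more elementary and gives the same conclusion without a compactness argument; it is a nice simplification.

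One small gap to flag: your $(2)\Rightarrow(3)$ argument via the inclusions $\Phi_\zeta((r/C_1)\Bb)\subset\Bb_\Omega(\zeta,r)\subset\Phi_\zeta(C_1 r\Bb)$ only makes sense for $r<1/C_1$, since $\Phi_\zeta$ is defined only on $\Bb$. The paper treats $r\ge 1/C_1$ separately by covering $\Bb_\Omega(\zeta,r)$ with balls $\Bb_\Omega(\zeta_j,r_0)$ of a fixed small radius $r_0$ using Corollary~\ref{cor:discretization}, bounding $\mu(\Bb_\Omega(\zeta,r))$ by $\sum_j \mu(\Bb_\Omega(\zeta_j,r_0))$ with at most $L$ terms, and then invoking the small-$r$ case together with $(2)$ to compare each $\Bf_\Omega(\zeta_j,\zeta_j)$ with $\Bf_\Omega(\zeta,\zeta)$. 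This is routine but should be mentioned.
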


The rest of the section is devoted to the proof of the theorem.

\begin{lemma} (1) $\Rightarrow$ (2). \end{lemma}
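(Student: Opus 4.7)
Write $\lambda(z) := \log \Bf_\Omega(z,z)$, so that $\Levi(\lambda) = g_\Omega$, and let $M := \sup_{z \in \Omega}\norm{\partial \lambda}_{g_\Omega} < \infty$ by hypothesis. The strategy is to promote this bound on $\partial\lambda$ to a Lipschitz estimate for $\lambda$ with respect to $\dist_\Omega$ and then exponentiate.

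First, I would upgrade self bounded gradient to an estimate for $d\lambda$ against the real Riemannian norm. Since $\lambda$ is real valued, for any real tangent vector $v \in T_z\Omega$, decomposing $v = X + \bar X$ with $X$ the $(1,0)$-part gives
\[
d\lambda(v) = \partial\lambda(X) + \bar\partial\lambda(\bar X) = 2\Real\bigl(\partial\lambda(X)\bigr),
\]
so $\abs{d\lambda(v)} \leq 2M\sqrt{g_\Omega(X,\bar X)}$. Because the Riemannian norm $\norm{v}_{g_\Omega}$ induced by the K\"ahler form $g_\Omega$ is comparable (with a constant depending only on the K\"ahler convention) to $\sqrt{g_\Omega(X,\bar X)}$, this gives $\abs{d\lambda(v)} \leq cM \norm{v}_{g_\Omega}$ for some absolute constant $c > 0$.

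Second, I would integrate along a near-geodesic. Fix $z,\zeta \in \Omega$ with $\dist_\Omega(z,\zeta) \leq r$ and $\varepsilon > 0$. The Bergman metric is a complete K\"ahler metric on $\Omega$, so there is a piecewise smooth path $\gamma : [0,1] \to \Omega$ from $\zeta$ to $z$ of Bergman length at most $r + \varepsilon$. Then
\[
\abs{\lambda(z) - \lambda(\zeta)} = \abs{\int_0^1 d\lambda\bigl(\gamma'(t)\bigr)\, dt} \leq cM \int_0^1 \norm{\gamma'(t)}_{g_\Omega}\, dt \leq cM(r + \varepsilon).
\]
Letting $\varepsilon \to 0$ gives $\abs{\lambda(z) - \lambda(\zeta)} \leq cMr$. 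Exponentiating yields
\[
e^{-cMr} \leq \frac{\Bf_\Omega(z,z)}{\Bf_\Omega(\zeta,\zeta)} \leq e^{cMr},
\]
so $C(r) := e^{cMr}$ works.

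The argument is essentially mechanical once the gradient/Lipschitz passage is set up, so I do not expect any serious obstacle. The only point that deserves care is the bookkeeping in the first step: translating the Hermitian bound $\norm{\partial\lambda}_{g_\Omega} \leq M$ into a bound on $\abs{d\lambda(v)}$ for \emph{real} tangent vectors $v$, which is where the factor of $2\Real$ appears and where the convention-dependent constant $c$ enters. Once that is in place, completeness of the Bergman metric (already used throughout the paper) delivers the near-geodesic path and the rest is a one-line integration.
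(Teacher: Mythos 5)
Your proposal is correct and follows essentially the same route as the paper: the paper likewise passes from $\norm{\partial \log \Bf_\Omega(z,z)}_{g_\Omega} \leq Q$ to $\norm{d \log \Bf_\Omega(z,z)}_{g_\Omega} \leq 2Q$ using that the potential is real valued, and then integrates along paths to get $e^{-2Q\dist_\Omega(z,\zeta)} \leq \Bf_\Omega(z,z)/\Bf_\Omega(\zeta,\zeta) \leq e^{2Q\dist_\Omega(z,\zeta)}$. Your extra care about the real-versus-Hermitian norm convention is exactly the step the paper leaves implicit.
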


\begin{proof} Let
\begin{align*}
Q:=\sup_{z \in \Omega} \norm{\partial \log \Bf_\Omega(z,z)}_{g_\Omega} <+\infty.
\end{align*}
Since $\log \Bf_\Omega(z,z)$ is real valued, $\bar{\partial} \log \Bf_\Omega(z,z) = \overline{\partial \log \Bf_\Omega(z,z)}$. Hence 
\begin{align*}
\sup_{z \in \Omega} \norm{d \log \Bf_\Omega(z,z)}_{g_\Omega} \leq 2Q
\end{align*}
and so 
\begin{align*}
e^{-2Q \dist_\Omega(z,\zeta)}  \leq \frac{\Bf_\Omega(z,z)}{\Bf_\Omega(\zeta,\zeta)} \leq e^{2Q \dist_\Omega(z,\zeta)} 
\end{align*}
for all $z,\zeta \in \Omega$. 
\end{proof}

\begin{lemma} (2) $\Rightarrow$ (3).\end{lemma}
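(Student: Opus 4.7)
The plan is to chain together three comparisons to get from $\Bf_\Omega(\zeta,\zeta)$ to $1/\mu(\Bb_\Omega(\zeta,r))$, using the Bergman volume $V_\Omega$ as an intermediary. Throughout, let $C=C(r)$ be a generic constant depending only on $r$ whose value may change from line to line.

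First, assumption (2) says that on the ball $\Bb_\Omega(\zeta,r)$ we have $\Bf_\Omega(z,z) \asymp \Bf_\Omega(\zeta,\zeta)$ with constants depending only on $r$. Combining this with Proposition~\ref{prop:volume_comp}, which gives $dV_\Omega \asymp \Bf_\Omega(z,z) \, d\mu$ on all of $\Omega$, I get that on $\Bb_\Omega(\zeta,r)$,
\begin{align*}
dV_\Omega \asymp \Bf_\Omega(\zeta,\zeta) \, d\mu.
\end{align*}
Integrating this equivalence over $\Bb_\Omega(\zeta,r)$ yields
\begin{align*}
V_\Omega\bigl(\Bb_\Omega(\zeta,r)\bigr) \asymp \Bf_\Omega(\zeta,\zeta) \, \mu\bigl(\Bb_\Omega(\zeta,r)\bigr).
\end{align*}

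Next I will show that $V_\Omega(\Bb_\Omega(\zeta,r)) \asymp 1$ uniformly in $\zeta$, which will complete the proof. This is exactly the bounded-geometry input: by Theorem~\ref{thm:bergman_good} the Bergman metric has bounded sectional curvature and positive injectivity radius, so the Bishop--Gromov volume comparison theorem gives a uniform upper bound $V_\Omega(\Bb_\Omega(\zeta,r)) \leq C(r)$, while Croke's inequality \cite[Proposition 14]{Croke1980} (as already invoked in the proof of Proposition~\ref{prop:separated_sets}) gives a uniform lower bound $V_\Omega(\Bb_\Omega(\zeta,r)) \geq 1/C(r)$. Plugging this into the displayed equivalence rearranges to
\begin{align*}
\frac{1}{C} \, \frac{1}{\mu(\Bb_\Omega(\zeta,r))} \leq \Bf_\Omega(\zeta,\zeta) \leq C \, \frac{1}{\mu(\Bb_\Omega(\zeta,r))},
\end{align*}
which is statement (3).

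There is no real obstacle here: the implication is essentially a bookkeeping argument that trades the Bergman-metric volume for the Lebesgue volume via Proposition~\ref{prop:volume_comp}, and then uses that Bergman balls of fixed Bergman radius have uniformly comparable Bergman volume thanks to bounded geometry. The only point that requires (2) rather than just the bounded-geometry hypothesis is pulling the factor $\Bf_\Omega(z,z)$ out of the integral as $\Bf_\Omega(\zeta,\zeta)$; without (2) this factor would vary across the ball and only the weaker equivalence between $V_\Omega$ and the weighted Lebesgue measure (Proposition~\ref{prop:volume_comp}) would survive.
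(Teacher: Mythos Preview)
Your argument is correct and takes a genuinely different route from the paper's. The paper works directly with the holomorphic charts $\Phi_\zeta$ from Theorem~\ref{thm:good_charts}: for small $r$ it sandwiches $\Bb_\Omega(\zeta,r)$ between $\Phi_\zeta\left(\frac{r}{C_1}\Bb\right)$ and $\Phi_\zeta(C_1 r\Bb)$, changes variables, and uses Theorem~\ref{thm:Bergman_kernel_est} together with (2) to control $\abs{\det\Phi_\zeta'(w)}^2$; for large $r$ it reduces to the small-$r$ case via the discretization in Corollary~\ref{cor:discretization}. Your proof instead packages the chart information through Proposition~\ref{prop:volume_comp} and then invokes the purely Riemannian facts (Bishop--Gromov upper bound, Croke lower bound) to control $V_\Omega(\Bb_\Omega(\zeta,r))$ uniformly. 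This is shorter and more conceptual, and in particular avoids the case split on the size of $r$; the paper's approach, on the other hand, is more self-contained in that it stays inside the chart machinery already developed in Section~\ref{sec:estimates} rather than appealing to the comparison theorems again.
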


\begin{proof} Let $C_1 > 1$ be the constant from Theorem~\ref{thm:good_charts}. We consider two cases: 

\emph{Case 1:} Assume $r < \frac{1}{C_1}$. If $\zeta \in \Omega$, then Theorem~\ref{thm:good_charts} implies that 
\begin{align*}
\Phi_\zeta\left(\frac{r}{C_1} \Bb\right) \subset \Bb_\Omega(\zeta,r)  \subset \Phi_\zeta\left(C_1r \Bb \right).
\end{align*}
So
\begin{align*}
\int_{\frac{r}{C_1} \Bb} \abs{ \det \Phi_\zeta^\prime(w)}^2 d\mu(w) \leq \mu\left( \Bb_\Omega(\zeta,r) \right)  \leq \int_{C_1r \Bb} \abs{ \det \Phi_\zeta^\prime(w)}^2 d\mu(w).
\end{align*}
By Theorem~\ref{thm:Bergman_kernel_est} part (1) and the assumption 
\begin{align*}
\abs{ \det \Phi_\zeta^\prime(w)}^2  \asymp\frac{1}{\Bf_\Omega(\Phi_\zeta(w),\Phi_\zeta(w))} \asymp \frac{1}{\Bf_\Omega(\zeta,\zeta)}
\end{align*}
when $w \in \Bb$. So, in this case, there exists $C=C(r) > 1$ such that: if $\zeta \in \Omega$, then 
\begin{align*}
 \frac{1}{C} \frac{1}{\mu\left( \Bb_\Omega(\zeta,r) \right)} \leq \Bf_\Omega(\zeta,\zeta) \leq C \frac{1}{\mu\left( \Bb_\Omega(\zeta,r) \right)}.
\end{align*}

\emph{Case 2:} Assume $r \geq \frac{1}{C_1}$. Fix $r_0 < \frac{1}{C_1}$. By Corollary~\ref{cor:discretization} there exists a sequence $(\zeta_m)_{m \geq 1}$ of distinct points in $\Omega$ such that 
\begin{enumerate}
\item $\{\zeta_m : m \geq 1\}$ is $r_0$-separated with respect to the Bergman distance,
\item $\cup_m \Bb_\Omega(\zeta_m, r_0) = \Omega$, and 
 \item $L:=\sup_{z \in \Omega} \#\{ m : \zeta_m \in \Bb_\Omega(z, r+r_0) \} <+\infty$.
\end{enumerate}

Then if $\zeta \in \Omega$, case 1 implies that
 \begin{align*}
\mu\left( \Bb_\Omega(\zeta,r) \right) \geq \mu\left( \Bb_\Omega(\zeta,r_0) \right) \gtrsim \frac{1}{\Bf_\Omega(\zeta,\zeta)}.
 \end{align*}
Also, case 1 and the assumption imply that 
  \begin{align*}
\mu\left( \Bb_\Omega(\zeta,r) \right) \leq \sum_{\zeta_j \in \Bb_\Omega(\zeta,r+r_0)} \mu\left( \Bb_\Omega(\zeta_j,r_0) \right) \lesssim L  \max_{\zeta_j \in \Bb_\Omega(z,r+r_0)} \frac{1}{\Bf_\Omega(\zeta_j,\zeta_j)} \lesssim  \frac{1}{\Bf_\Omega(\zeta,\zeta)}
 \end{align*}
 (notice that the implicit constant depends on $r > 0$). So, in this case, there exists $C=C(r) > 1$ such that: if $\zeta \in \Omega$, then 
\begin{align*}
 \frac{1}{C} \frac{1}{\mu\left( \Bb_\Omega(\zeta,r) \right)} \leq \Bf_\Omega(\zeta,\zeta) \leq C \frac{1}{\mu\left( \Bb_\Omega(\zeta,r) \right)}.
\end{align*}

\end{proof}

\begin{lemma} (3) $\Rightarrow$ (2). \end{lemma}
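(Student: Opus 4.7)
The plan is to use statement (3) at two different radii together with the triangle inequality for the Bergman distance. Fix $r > 0$ and suppose $z,\zeta \in \Omega$ satisfy $\dist_\Omega(z,\zeta) \leq r$. The key geometric observation is that, by the triangle inequality,
\begin{align*}
\Bb_\Omega(\zeta, r) \subset \Bb_\Omega(z, 2r) \quad \text{and} \quad \Bb_\Omega(z, r) \subset \Bb_\Omega(\zeta, 2r),
\end{align*}
which gives $\mu(\Bb_\Omega(\zeta,r)) \leq \mu(\Bb_\Omega(z,2r))$ and the symmetric inequality.

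Now invoke statement (3) at radii $r$ and $2r$: let $C_1 = C(r)$ and $C_2 = C(2r)$ be the constants provided there. Applied to $z$, the upper bound from (3) yields $\Bf_\Omega(z,z) \leq C_2/\mu(\Bb_\Omega(z,2r))$, and applied to $\zeta$, the lower bound yields $\Bf_\Omega(\zeta,\zeta) \geq 1/(C_1 \mu(\Bb_\Omega(\zeta,r)))$. Chaining these through the containment $\mu(\Bb_\Omega(\zeta,r)) \leq \mu(\Bb_\Omega(z,2r))$ gives
\begin{align*}
\Bf_\Omega(z,z) \leq \frac{C_2}{\mu(\Bb_\Omega(z,2r))} \leq \frac{C_2}{\mu(\Bb_\Omega(\zeta,r))} \leq C_1 C_2 \, \Bf_\Omega(\zeta,\zeta).
\end{align*}
Swapping the roles of $z$ and $\zeta$ (using the other containment) gives the reverse inequality with the same constants. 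Setting $C := C_1 C_2 = C(r) C(2r)$ then establishes (2).

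There is no real obstacle here: the argument is a two-line juggle of inclusions and the hypothesis (3) applied at two radii. The only point to note is that one must genuinely use (3) at both $r$ and $2r$, since comparing $\Bf_\Omega(z,z)$ and $\Bf_\Omega(\zeta,\zeta)$ via volumes of balls of the same radius requires passing through an enlarged ball to exploit the trivial monotonicity of Lebesgue measure under inclusion.
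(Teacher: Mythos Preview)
Your proof is correct and essentially identical to the paper's: both apply hypothesis (3) at radii $r$ and $2r$, then use the containment $\Bb_\Omega(\zeta,r) \subset \Bb_\Omega(z,2r)$ from the triangle inequality to compare the Lebesgue volumes. The paper compresses the chain into the single estimate $\frac{\Bf_\Omega(z,z)}{\Bf_\Omega(\zeta,\zeta)} \lesssim \frac{\mu(\Bb_\Omega(\zeta,r))}{\mu(\Bb_\Omega(z,2r))} \leq 1$, but the content is the same.
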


\begin{proof} Fix $r > 0$. If $\dist_\Omega(z,\zeta) < r$, then  
\begin{align*}
\frac{\Bf_\Omega(z,z)}{\Bf_\Omega(\zeta,\zeta)} \lesssim \frac{\mu(\Bb_\Omega(\zeta,r))}{\mu(\Bb_\Omega(z,2r))} \leq 1
\end{align*}
and 
\begin{align*}
\frac{\Bf_\Omega(z,z)}{\Bf_\Omega(\zeta,\zeta)} \gtrsim \frac{\mu(\Bb_\Omega(\zeta,2r))}{\mu(\Bb_\Omega(z,r))} \geq 1
\end{align*}
 (notice that the implicit constants depend on $r > 0$). 
 \end{proof}
 
 \begin{lemma} (2 and 3) $\Rightarrow$ (4). \end{lemma}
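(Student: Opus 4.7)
The plan is to chain together three pointwise/volume estimates already available on any ball $\Bb_\Omega(\zeta,r)$ with $\zeta\in\Omega$. First I would invoke Proposition~\ref{prop:volume_comp}, which gives the global comparison
\begin{align*}
\frac{1}{C_5}\Bf_\Omega(z,z)\,d\mu(z) \leq dV_\Omega(z) \leq C_5\Bf_\Omega(z,z)\,d\mu(z),
\end{align*}
converting $dV_\Omega$ into $\Bf_\Omega(z,z)\,d\mu(z)$ up to a uniform multiplicative constant. Next, I would use hypothesis (2) on the ball $\Bb_\Omega(\zeta,r)$ to replace the varying factor $\Bf_\Omega(z,z)$ by the constant $\Bf_\Omega(\zeta,\zeta)$: for $z\in\Bb_\Omega(\zeta,r)$,
\begin{align*}
\frac{1}{C(r)}\Bf_\Omega(\zeta,\zeta)\leq \Bf_\Omega(z,z)\leq C(r)\Bf_\Omega(\zeta,\zeta).
\end{align*}
Finally, I would invoke hypothesis (3) to compare $\Bf_\Omega(\zeta,\zeta)$ with $1/\mu(\Bb_\Omega(\zeta,r))$.

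Chaining these three equivalences on $\Bb_\Omega(\zeta,r)$ yields
\begin{align*}
dV_\Omega(z) \asymp \Bf_\Omega(z,z)\,d\mu(z) \asymp \Bf_\Omega(\zeta,\zeta)\,d\mu(z) \asymp \frac{d\mu(z)}{\mu(\Bb_\Omega(\zeta,r))},
\end{align*}
with all implicit constants depending only on $r$, which is precisely (4). There is essentially no obstacle here; the only point worth checking is that the constants in (2), (3), and Proposition~\ref{prop:volume_comp} combine into a single $C=C(r)$ independent of $\zeta$, which is immediate since each of them is already uniform in $\zeta\in\Omega$.
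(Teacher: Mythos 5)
Your argument is correct and is exactly the chain the paper uses: Proposition~\ref{prop:volume_comp} to pass from $dV_\Omega$ to $\Bf_\Omega(z,z)\,d\mu$, then (2) to freeze the kernel at $\zeta$, then (3) to trade $\Bf_\Omega(\zeta,\zeta)$ for $1/\mu(\Bb_\Omega(\zeta,r))$, with all constants uniform in $\zeta$. Nothing to add.
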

 
 \begin{proof} Fix $r > 0$. By Proposition~\ref{prop:volume_comp}
\begin{align*}
dV_\Omega(z) \asymp \Bf_\Omega(z,z)d\mu(z). 
\end{align*}
So (2) and (3) imply that on $\Bb_\Omega(\zeta, r)$ we have 
\begin{align*}
dV_\Omega(z) \asymp \Bf_\Omega(\zeta,\zeta) d\mu(z) \asymp \frac{1}{\mu(\Bb_\Omega(\zeta,r))} d\mu(z)
\end{align*}
 (notice that the implicit constants depend on $r > 0$). 
 \end{proof}

\begin{lemma} (4) $\Rightarrow$ (2). \end{lemma}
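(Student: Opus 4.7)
The plan is to combine hypothesis (4) with Proposition~\ref{prop:volume_comp} to read off the desired pointwise comparison of $\Bf_\Omega(z,z)$. Recall that by the definition of $dV_\Omega$ together with Proposition~\ref{prop:volume_comp}, the density of $dV_\Omega$ with respect to $d\mu$ satisfies
$$
\frac{dV_\Omega}{d\mu}(z) \asymp \Bf_\Omega(z,z)
$$
pointwise on $\Omega$, with implicit constants independent of $z$.

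Fix $r > 0$ and let $C=C(r)>1$ be the constant from (4). Since both $dV_\Omega$ and $\mu(\Bb_\Omega(\zeta,r))^{-1} d\mu$ are absolutely continuous with respect to $d\mu$, statement (4) is equivalent to the pointwise (a.e., hence, by continuity of $\Bf_\Omega(z,z)$ and smoothness of $g_\Omega$, everywhere) estimate
$$
\frac{1}{C}\,\frac{1}{\mu(\Bb_\Omega(\zeta,r))} \;\leq\; \frac{dV_\Omega}{d\mu}(z) \;\leq\; C\,\frac{1}{\mu(\Bb_\Omega(\zeta,r))}
$$
for every $z \in \Bb_\Omega(\zeta,r)$. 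Combining this with the comparison $\frac{dV_\Omega}{d\mu}(z) \asymp \Bf_\Omega(z,z)$ yields
$$
\Bf_\Omega(z,z) \;\asymp\; \frac{1}{\mu(\Bb_\Omega(\zeta,r))}
$$
uniformly for all $z \in \Bb_\Omega(\zeta,r)$, where the implicit constant depends only on $r$.

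Specializing to $z = \zeta$ gives $\Bf_\Omega(\zeta,\zeta) \asymp \mu(\Bb_\Omega(\zeta,r))^{-1}$ as well. Thus for any $z,\zeta \in \Omega$ with $\dist_\Omega(z,\zeta) \leq r$ we have $z \in \overline{\Bb_\Omega(\zeta,2r)}$, and applying the bound with $2r$ in place of $r$ gives
$$
\frac{\Bf_\Omega(z,z)}{\Bf_\Omega(\zeta,\zeta)} \;\asymp\; \frac{\mu(\Bb_\Omega(\zeta,2r))^{-1}}{\mu(\Bb_\Omega(\zeta,2r))^{-1}} \;=\; 1,
$$
with an implicit constant $C'=C'(r)$. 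This is exactly (2). No serious obstacle is expected — the argument is a direct translation between the two forms of Bergman-kernel comparability via Proposition~\ref{prop:volume_comp}, and the only minor care needed is to handle the ``boundary'' case $\dist_\Omega(z,\zeta)=r$ by slightly enlarging the radius.
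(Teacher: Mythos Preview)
Your proof is correct and follows essentially the same route as the paper: both combine hypothesis~(4) with Proposition~\ref{prop:volume_comp} to obtain $\Bf_\Omega(z,z)\asymp \mu(\Bb_\Omega(\zeta,r))^{-1}$ for $z\in\Bb_\Omega(\zeta,r)$, then compare $z$ and $\zeta$ through this common quantity. Your extra care with the boundary case (doubling the radius) is harmless and, if anything, slightly more precise than the paper's version.
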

\begin{proof} Fix $r > 0$. By Proposition~\ref{prop:volume_comp}
\begin{align*}
dV_\Omega(z) \asymp \Bf_\Omega(z,z)d\mu(z). 
\end{align*}
So for $z \in \Bb_\Omega(\zeta, r)$, we have 
\begin{align*}
\Bf_\Omega(z,z) \asymp \frac{1}{\mu(\Bb_\Omega(\zeta, r))} 
\end{align*}
and hence 
\begin{align*}
\Bf_\Omega(z,z) \asymp \Bf_\Omega(\zeta,\zeta)
\end{align*}
(notice that the implicit constants depend on $r > 0$).
\end{proof}

\begin{lemma} (2) $\Rightarrow$ (5). \end{lemma}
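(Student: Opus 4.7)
The plan is to show that under hypothesis (2) the function
\begin{align*}
H_\zeta(w) := \log \abs{\det \Phi_\zeta^\prime(w)}^2 + \log \Bf_\Omega(\zeta,\zeta)
\end{align*}
is pluriharmonic on $\Bb$ and bounded in $L^\infty$ by a constant independent of $\zeta \in \Omega$, and then apply a standard interior gradient estimate for pluriharmonic functions at the center of the ball. For the uniform bound, rewrite $H_\zeta$ using the definition of $\beta_\zeta$:
\begin{align*}
H_\zeta(w) = \log \beta_\zeta(w,w) - \bigl[\log \Bf_\Omega\bigl(\Phi_\zeta(w), \Phi_\zeta(w)\bigr) - \log \Bf_\Omega(\zeta,\zeta)\bigr].
\end{align*}
Theorem~\ref{thm:Bergman_kernel_est}(1) controls $\log \beta_\zeta(w,w)$ uniformly on $\Bb \times \Omega$. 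For the bracketed term, Theorem~\ref{thm:good_charts} gives $\Phi_\zeta(\Bb) \subset \Bb_\Omega(\zeta, C_1)$, so hypothesis (2) with $r = C_1$ yields $\abs{\log \Bf_\Omega(\Phi_\zeta(w), \Phi_\zeta(w)) - \log \Bf_\Omega(\zeta,\zeta)} \leq \log C(C_1)$ for every $\zeta \in \Omega$ and $w \in \Bb$. Together these give some $M > 0$ with $\sup_{\Bb} \abs{H_\zeta} \leq M$ for all $\zeta$.

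Since $\Phi_\zeta$ is a holomorphic embedding, $\det \Phi_\zeta^\prime$ is a nonvanishing holomorphic function on $\Bb$, and consequently $\log \abs{\det \Phi_\zeta^\prime(w)}^2 = 2\,\Real \log \det \Phi_\zeta^\prime(w)$ is pluriharmonic on $\Bb$; the additive $w$-independent term $\log \Bf_\Omega(\zeta,\zeta)$ does not affect this, so $H_\zeta$ is pluriharmonic on $\Bb$. Pluriharmonic functions are harmonic in the sense of $\Rb^{2d}$, so the Poisson integral representation on the unit ball (differentiating the Poisson kernel at the origin under the integral sign) yields a dimensional constant $C_d$ with $\norm{\partial_w H_\zeta|_{w=0}}_{2} \leq C_d M$. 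Because $\partial_w H_\zeta = 2\,\partial_w \log \abs{\det \Phi_\zeta^\prime(w)}$, this establishes (5) with uniform constant $C_d M / 2$.

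There is no real obstacle: the key observation is that the pullback potential $\log \Bf_\Omega \circ \Phi_\zeta$ differs from the pluriharmonic function $-\log \abs{\det \Phi_\zeta^\prime(w)}^2$ only by the universally bounded quantity $\log \beta_\zeta(w,w)$. Hence a uniform oscillation bound on $\log \Bf_\Omega$ over Bergman balls of fixed radius (which is precisely (2)) translates into a uniform $L^\infty$ bound on a pluriharmonic function, from which pointwise control of its gradient at the center of the chart is immediate from classical harmonic function theory.
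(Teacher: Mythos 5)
Your proof is correct. The core estimate is the same as the paper's: both arguments combine hypothesis (2) with Theorem~\ref{thm:good_charts} (to place $\Phi_\zeta(\Bb)$ inside $\Bb_\Omega(\zeta,C_1)$) and Theorem~\ref{thm:Bergman_kernel_est}(1) to conclude that the normalized Jacobian determinant is uniformly controlled, i.e.\ that $\log\abs{\det\Phi_\zeta^\prime(w)}^2+\log\Bf_\Omega(\zeta,\zeta)$ is bounded on $\Bb$ independently of $\zeta$. Where you diverge is in the concluding step. The paper takes a sequence $(\zeta_m)$ realizing the supremum in (5), forms the normalized holomorphic functions $f_m=\det\Phi_{\zeta_m}^\prime/\det\Phi_{\zeta_m}^\prime(0)$ with $\abs{f_m}\asymp 1$, and invokes Montel's theorem to pass to a locally uniform limit, so that the derivatives at $0$ converge and the supremum is finite; this is a soft compactness argument and yields no explicit constant. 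You instead observe that the bounded quantity is pluriharmonic (since $\det\Phi_\zeta^\prime$ is nonvanishing on the simply connected ball) and apply the interior gradient estimate for bounded harmonic functions at the center, obtaining a uniform and in principle explicit bound $C_dM/2$. Your route is slightly more quantitative and avoids the extraction of subsequences; the paper's is marginally shorter to write given that Montel-type normal-family arguments are already used elsewhere in the paper. One cosmetic caveat: the standard gradient estimate is usually stated on a ball compactly contained in the domain of harmonicity, so you should apply it on, say, $\tfrac12\Bb$ (which only changes the dimensional constant); this does not affect correctness.
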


\begin{proof} Fix a sequence $(\zeta_m)_{m \geq 1}$ such that 
\begin{align*}
\sup_{\zeta \in \Omega} \norm{ \left. \partial_w \log \abs{\det \Phi_\zeta^\prime(w)} \, \right|_{w=0}}_{2} = \lim_{m \rightarrow \infty} \norm{ \left. \partial_w \log \abs{\det \Phi_{\zeta_m}^\prime(w)} \, \right|_{w=0}}_{2}.
\end{align*}
Define $f_m : \Bb \rightarrow \Cb$ by 
\begin{align*}
f_m(w) = \frac{\det \Phi_{\zeta_m}^\prime(w)}{\det \Phi_{\zeta_m}^\prime(0)}.
\end{align*}
By Theorem~\ref{thm:good_charts} there exists $C_1 > 1$ such that 
\begin{align*}
\Phi_\zeta(\Bb) \subset \Bb_\Omega(\zeta, C_1)
\end{align*}
for all $\zeta \in \Omega$. So by Theorem~\ref{thm:Bergman_kernel_est} part (1) and the assumption 
\begin{align*}
\abs{f_m(w)}^2 \asymp \frac{\Bf_\Omega(\zeta,\zeta)}{\Bf_\Omega(\Phi_\zeta(w), \Phi_\zeta(w))} \asymp 1. 
\end{align*}

Using Montel's theorem and passing to a subsequence we can suppose that $f_m$ converges locally uniformly to a holomorphic function $f : \Bb \rightarrow \Cb$. Then 
\begin{align*}
\sup_{\zeta \in \Omega} & \norm{ \left. \partial_w \log \abs{\det \Phi_\zeta^\prime(w)}\,\right|_{w=0} }_{2}  = \lim_{m \rightarrow \infty} \norm{ \left. \partial_w \log \abs{\det \Phi_{\zeta_m}^\prime(w)} \, \right|_{w=0}}_{2} \\
& = \lim_{m \rightarrow \infty}  \norm{\partial f_m(0)}_{2}= \norm{\partial f(0)}_{2} < +\infty.
\end{align*}
\end{proof}

\begin{lemma} (1) $\Leftrightarrow$ (5). \end{lemma}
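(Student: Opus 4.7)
The plan is to use the biholomorphic transformation rule for the Bergman kernel to directly link the two quantities. By definition of $\beta_\zeta$,
\begin{align*}
\beta_\zeta(w,w) = \Bf_\Omega(\Phi_\zeta(w), \Phi_\zeta(w))\,\abs{\det \Phi_\zeta^\prime(w)}^2.
\end{align*}
Taking logarithms and then applying $\partial_w$ (using $\log \abs{\det \Phi_\zeta^\prime}^2 = 2\log\abs{\det \Phi_\zeta^\prime}$, and the fact that the antiholomorphic $\bar w$-dependence drops out under $\partial_w$) I would derive the identity
\begin{align*}
\Phi_\zeta^*\bigl(\partial \log \Bf_\Omega(z,z)\bigr)\big|_{0} = \partial_w \log \beta_\zeta(w,w)\big|_{w=0} - 2\,\partial_w \log \abs{\det \Phi_\zeta^\prime(w)}\big|_{w=0}.
\end{align*}

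Next I would argue that the first term on the right is uniformly bounded in $\zeta$: by Theorem~\ref{thm:Bergman_kernel_est}, the value $\beta_\zeta(0,0)$ is bounded above and below by constants independent of $\zeta$, while all first-order partial derivatives of $\beta_\zeta(u,w)$ at $(u,w)=(0,0)$ are uniformly bounded. Hence $\partial_w \log \beta_\zeta(w,w)\big|_{w=0}$ has uniformly bounded Euclidean norm. The left hand side is the $\Phi_\zeta$-pullback of $\partial \log \Bf_\Omega(z,z)$ at $\zeta$; since Theorem~\ref{thm:good_charts} provides $\Phi_\zeta^* g_\Omega \asymp g_\Euc$ with constants independent of $\zeta$, the induced dual norms on $(1,0)$-covectors are also uniformly equivalent, so
\begin{align*}
\norm{\partial \log \Bf_\Omega(z,z)\big|_\zeta}_{g_\Omega} \asymp \norm{\Phi_\zeta^*\bigl(\partial \log \Bf_\Omega(z,z)\bigr)\big|_{0}}_{2}.
\end{align*}

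Rearranging the displayed identity and applying the triangle inequality, finiteness of
$\sup_{\zeta \in \Omega} \norm{\partial \log \Bf_\Omega(z,z)|_\zeta}_{g_\Omega}$
will be equivalent to finiteness of
$\sup_{\zeta \in \Omega} \norm{\partial_w \log\abs{\det \Phi_\zeta^\prime(w)}|_{w=0}}_{2}$.
Since $\Levi(\log \Bf_\Omega(z,z)) = g_\Omega$, the former condition is exactly the self bounded gradient condition from Section~\ref{sec:defn_SBG}, i.e.\ (1), while the latter is (5). I do not anticipate a genuine obstacle: the only delicate points are the chain-rule computation identifying $\Phi_\zeta^*(\partial \log \Bf_\Omega(z,z))$ and the passage from a uniform bi-Lipschitz equivalence of Hermitian metrics to the corresponding equivalence of dual norms on $(1,0)$-covectors, both of which are routine.
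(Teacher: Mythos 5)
Your proposal is correct and follows essentially the same route as the paper: pull back $\partial\log\Bf_\Omega(z,z)$ through $\Phi_\zeta$, use the uniform bi-Lipschitz equivalence $\Phi_\zeta^*g_\Omega \asymp g_{\Euc}$ from Theorem~\ref{thm:good_charts} to pass to Euclidean norms, and then use the definitional identity for $\beta_\zeta$ together with the uniform derivative bounds of Theorem~\ref{thm:Bergman_kernel_est} to show the two gradient terms differ by a uniformly bounded quantity. No substantive differences from the paper's argument.
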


\begin{proof} By Theorem~\ref{thm:good_charts}
\begin{align*}
\norm{ \left.\partial_z \log \Bf_\Omega(z,z) \right|_{z=\zeta}}_{g_\Omega} & = \norm{\left. \partial_w \log \Bf_\Omega( \Phi_\zeta(w), \Phi_\zeta(w)) \right|_{w=0}}_{\Phi_z^*g_\Omega} \\
& \asymp  \norm{\left. \partial_w \log \Bf_\Omega( \Phi_\zeta(w), \Phi_\zeta(w)) \right|_{w=0}}_{2}.
\end{align*}
Further, by Theorem~\ref{thm:Bergman_kernel_est}
\begin{align*}
 \norm{\left. \partial_w \log \beta_\zeta( w,w) \right|_{w=0}}_{2}
 \end{align*}
 is uniformly bounded and by definition 
\begin{align*}
\left. \partial_w \log \Bf_\Omega( \Phi_\zeta(w), \Phi_\zeta(w)) \right|_{w=0} =  \left. \left( \partial_w \log \beta_\zeta( w,w) - \partial_w \log \abs{\det \Phi_\zeta^\prime(w)}^2\right) \right|_{w=0}.
\end{align*}
So (5) $\Leftrightarrow$ (1).
\end{proof}

\bibliographystyle{alpha}
\bibliography{complex}

\begin{thebibliography}{BBCZ90}

\bibitem[BBCZ90]{BBC1990}
D.~B\'{e}koll\'{e}, C.~A. Berger, L.~A. Coburn, and K.~H. Zhu.
\newblock B{MO} in the {B}ergman metric on bounded symmetric domains.
\newblock {\em J. Funct. Anal.}, 93(2):310--350, 1990.

\bibitem[Cat80]{Catlin1980}
David Catlin.
\newblock Boundary behavior of holomorphic functions on pseudoconvex domains.
\newblock {\em J. Differential Geometry}, 15(4):605--625 (1981), 1980.

\bibitem[ccS20]{CSS2020}
Mehmet \c{C}elik, S\"{o}nmez \c{S}ahuto\u{g}lu, and Emil~J. Straube.
\newblock Convex domains, {H}ankel operators, and maximal estimates.
\newblock {\em Proc. Amer. Math. Soc.}, 148(2):751--764, 2020.

\bibitem[Cro80]{Croke1980}
Christopher~B. Croke.
\newblock Some isoperimetric inequalities and eigenvalue estimates.
\newblock {\em Ann. Sci. \'{E}cole Norm. Sup. (4)}, 13(4):419--435, 1980.

\bibitem[CSS20a]{CSS2020b}
Mehmet Celik, Sonmez Sahutoglu, and Emil~J. Straube.
\newblock Compactness of {H}ankel operators with continuous symbols on convex
  domains.
\newblock {\em Houston J. of Math.}, 46(4):991--1002, 2020.

\bibitem[CSS20b]{CSS2020c}
Mehmet Celik, Sonmez Sahutoglu, and Emil~J. Straube.
\newblock A sufficient condition for compactness of {H}ankel operators, 2020.

\bibitem[Don94]{D1994}
Harold Donnelly.
\newblock {$L_2$} cohomology of pseudoconvex domains with complete {K}\"{a}hler
  metric.
\newblock {\em Michigan Math. J.}, 41(3):433--442, 1994.

\bibitem[Don97]{D1997}
Harold Donnelly.
\newblock {$L_2$} cohomology of the {B}ergman metric for weakly pseudoconvex
  domains.
\newblock {\em Illinois J. Math.}, 41(1):151--160, 1997.

\bibitem[Li92]{Li1992}
Huiping Li.
\newblock B{MO}, {VMO} and {H}ankel operators on the {B}ergman space of
  strongly pseudoconvex domains.
\newblock {\em J. Funct. Anal.}, 106(2):375--408, 1992.

\bibitem[Li94]{Li1994}
Huiping Li.
\newblock Hankel operators on the {B}ergman spaces of strongly pseudoconvex
  domains.
\newblock {\em Integral Equations Operator Theory}, 19(4):458--476, 1994.

\bibitem[LL94]{LiLuecking1994}
Huiping Li and Daniel~H. Luecking.
\newblock B{MO} on strongly pseudoconvex domains: {H}ankel operators, duality
  and {$\overline\partial$}-estimates.
\newblock {\em Trans. Amer. Math. Soc.}, 346(2):661--691, 1994.

\bibitem[Lue92]{Luecking1992}
Daniel~H. Luecking.
\newblock Characterizations of certain classes of {H}ankel operators on the
  {B}ergman spaces of the unit disk.
\newblock {\em J. Funct. Anal.}, 110(2):247--271, 1992.

\bibitem[McN01]{M2001}
Jeffery~D. McNeal.
\newblock Invariant metric estimates for {$\overline\partial$} on some
  pseudoconvex domains.
\newblock {\em Ark. Mat.}, 39(1):121--136, 2001.

\bibitem[MV15]{MV2015}
Jeffery~D. McNeal and Dror Varolin.
\newblock {$L^2$} estimates for the {$\overline\partial$} operator.
\newblock {\em Bull. Math. Sci.}, 5(2):179--249, 2015.

\bibitem[Shi97]{Shi1997}
Wan-Xiong Shi.
\newblock Ricci flow and the uniformization on complete noncompact {K}\"ahler
  manifolds.
\newblock {\em J. Differential Geom.}, 45(1):94--220, 1997.

\bibitem[vc09]{CS2009}
\v{Z}eljko \v{C}u\v{c}kovi\'{c} and S\"{o}nmez \c{S}ahuto\u{g}lu.
\newblock Compactness of {H}ankel operators and analytic discs in the boundary
  of pseudoconvex domains.
\newblock {\em J. Funct. Anal.}, 256(11):3730--3742, 2009.

\bibitem[WY20]{WY2020}
Damin Wu and Shing-Tung Yau.
\newblock Invariant metrics on negatively pinched complete {K}\"{a}hler
  manifolds.
\newblock {\em J. Amer. Math. Soc.}, 33(1):103--133, 2020.

\bibitem[Zim21]{Z2021}
Andrew Zimmer.
\newblock Compactness of the $\bar{\partial}$-{N}eumann problem on domains with
  bounded intrinsic geometry.
\newblock {\em J. Funct. Anal.}, 281(1):108992, 2021.

\end{thebibliography}

\end{document}